\pgfplotsset{compat=1.18}
\definecolor{notefontcolor}{rgb}{0.800781, 0.800781, 0.800781}
\definecolor{grey30}{rgb}{0.7,0.7,0.7}
\renewcommand{\eqref}[1]{\textup{\tagform@{\ref{#1}}}}
\numberwithin{equation}{section}
\theoremstyle{plain}
\newtheorem{theorem}{Theorem}[section]
\newtheorem{lemma}[theorem]{Lemma}
\newtheorem{proposition}[theorem]{Proposition}
\newtheorem{corollary}[theorem]{Corollary}
\theoremstyle{remark}
\newtheorem{remark}[theorem]{Remark}
\renewcommand{\Im}{\textup{Im}}
\renewcommand{\Re}{\textup{Re}}
\DeclareMathOperator{\MP}{MP}
\DeclareMathOperator{\argsup}{argsup}
\newcommand{\musc}[1][]{ {{\mu_{{\rm sc}\ifx#1\empty\else,#1\fi}}} }
\newcommand{\muMP}{ {{\mu_{\MP,\a}}} }
\newcommand{\e}{\e}
\newcommand{\R}{{\mathbb{R}}}
\newcommand{\dom}{{\rm dom}}
\newcommand{\sign}{{\rm sign}}
\newcommand{\muplusnu}{{\mu\boxplus\nu}}
\renewcommand {\(}{\left(}
\renewcommand {\)}{\right)}
\def\be{\begin{equation}}
\def\ee{\end{equation}}
\def\bea{\begin{eqnarray}}
\def\eea{\end{eqnarray}}
\def\been#1{\begin{equation}#1\end{equation}}
\DeclareMathSymbol{\leqlant}{\mathalpha}{AMSa}{"36} 
\DeclareMathSymbol{\geqslant}{\mathalpha}{AMSa}{"3E} 
\DeclareMathSymbol{\eset}{\mathalpha}{AMSb}{"3F}     
\renewcommand{\leq}{\:\leqlant\:}                   
\renewcommand{\geq}{\:\geqslant\:}                   
\def\nn{\nonumber}
\def\a{\beta}
\def\e{\varepsilon}
\def\d{\delta}
\def\b{\beta}
\def\l{\lambda}
\def\t{\tau}
\def\R{\mathbb{R}}
\DeclareMathOperator{\supp}{supp}
\definecolor{light}{gray}{.9}
\definecolor{mypink1}{rgb}{0.858, 0.188, 0.478}
\definecolor{g3col1}{rgb}{0.0, 0.8, 0.0}
\tikzset{every node/.style={font=\footnotesize}}
\pgfplotsset{
    alignedplot/.style={
        width=3.2cm,
        height=3.2cm,
        grid=none,
        scale only axis,
        tick label style={font=\footnotesize},
        clip=true
    },
    fourcolwidth/.style={
        width=3.1cm,
        height=3.1cm,
        clip=true
    },
}
\title[logarithmic potential of free convolutions]{Variational formula for the logarithmic potential of free additive convolutions}
\author{David Belius$^{1,*}$} 
\email{david.belius@cantab.net}
\author{Francesco Concetti$^{1,*}$}
\email{concio2@gmail.com}
\thanks{$^*$ D. B. and F. C. are supported by the SNSF grants 176918 and 206148. G.G. is partially supported by the NSERC Discovery Grant RGPIN-2025-04930.}
\address{$^1$Faculty of Mathematics and Computer Science, UniDistance Suisse, 3900 Brig, Switzerland}
\author{Giuseppe Genovese$^{2,*}$}
\email{giuseppe.genovese@math.ubc.ca}
\address{$^2$Dep. of Mathematics, University of British Columbia, 1984 Mathematics Road V6T 1Z2 Vancouver BC,  Canada}
\begin{document}

\begin{abstract}
    We establish a general variational formula for the logarithmic potential of the free additive convolution of two compactly supported probability measure on $\R$. The formula is given in terms of the $R$-transform of the first measure, and the logarithmic potential of second measure. The result applies in particular to the additive convolution with the semicircle or Marchenko-Pastur laws, for which the formula simplifies. The logarithmic potential of additive convolutions appears for instance in estimates of the determinant of sums of independent random matrices.
    {\bf MSC:} 46L54, 31A10, 60B20.
\end{abstract}
\maketitle

\section{Introduction}\label{sect:intro}

In this work we derive a formula for the logarithmic potential associated to the free additive convolution of two probability measures on $\R$ with compact support. The logarithmic potential of a probability measure $\mu$ is
\be\label{eq:log-P}
    U_\mu(z):=\int \log|z-\l|\mu(d\l),\quad z\in\mathbb{C}.
\ee
This natural quantity in planar potential theory appears in a number of different contexts.
Our interest in $U_\mu$
is due to its appearance in formulas for the exponential-scale asymptotics of determinants of the sums of independent random matrices. Such determinant asymptotics are crucial for studying high-dimensional random landscapes using the Kac-Rice formula, for instance in the context of spin glasses \cite{noi, ben, mon} or machine learning theory \cite{AMS,BKMN,MBAB,TM}. In such applications, one of the measures involved will often be the semicircle law or the Marchenko-Pastur law, for which our formula specializes to \eqref{eq:special_case_sc}-\eqref{eq:special_case_MP} below.



The Stieltjes transform of a probability measure $\mu$ on $\mathbb{R}$ is 
\be\label{eq:defSti}
    G_\mu(z) := \int\frac{\mu(d\l)}{\l-z},
    \quad
    z \in \mathbb{C}.
\ee
This is a holomorphic function in $\mathbb{C}$  with the support of $\mu$ removed.
The $R$-transform is defined by the identity
\newcommand{\inv}{ { {\left\langle -1\right\rangle} } }
\be\label{eq:R-transf}
    R_\mu(t)
    :=
    G_\mu^{\inv}(-t) - t^{-1}
\ee
of formal power series, where the superscript $\left\langle -1\right\rangle$ denotes the inverse.
Given two probability measures $\mu,\nu$, there is a unique probability measure called the free additive convolution of $\mu,\nu$, denoted by $\mu\boxplus\nu$, such that
\be\label{eq:sumR}
    R_{\mu\boxplus\nu}(t) = R_\mu(t) + R_\nu(t),
\ee
(see \cite[Theorem 18, Chapter 2]{speicher}). If $\mu$ and $\nu$ have compact support, then so does $\mu\boxplus\nu$ (see 
\eqref{eq:add_conv_supp_ineq}).
Let $\supp_{\pm}\rho$ denote the inf. resp. sup. of the support of a probability $\rho$ on $\mathbb{R}$. For any $\mu,\nu$ with compact support let
\begin{equation}\label{eq:zstar_gstar_def}
    z_{\mu \boxplus \nu}^* = \supp_{-}\mu\boxplus\nu,
    \quad\quad\quad\text{and}\quad\quad\quad
    g^*_{\mu \boxplus \nu}
    =
    G_{\mu\boxplus\nu}(z_{\mu \boxplus \nu}^*),
\end{equation}
where the latter denotes $\lim_{z \uparrow z_{\mu \boxplus \nu}^*}G_{\mu\boxplus\nu}(z) \in (0,\infty]$.

\subsection{Result for semicircle and Marchenko-Pastur laws}

When $\mu=\musc[\beta]$ is the semicircle law supported on $[-2\beta,2\beta]$ (see \eqref{eq:sc_def}), and $\nu$ is any compactly supported probability on $\mathbb{R}$,
our variational formula for the logarithmic potential $U_{\mu\boxplus\nu}$ reads
\begin{equation}\label{eq:special_case_sc}
    U_{\mu\boxplus\nu}(z)
    =
    \inf_{
        g \in (0,g^*_{\mu \boxplus \nu})
        :
        \beta^2 g < \supp_{-}\nu - z
    }
    \left\{
        \frac{\beta^2 g^2}{2}
        +
        \int\log( \lambda - z - \beta^2 g)\nu(d\lambda)
    \right\}
    \, \qquad \forall z<z_{\mu \boxplus \nu}^*.
\end{equation}

When $\mu=\muMP$ is the Marchenko-Pastur law with parameter $\beta>0$ (see \eqref{eq:MP}), our variational formula reads
\begin{equation}\label{eq:special_case_MP}
    U_{\mu\boxplus\nu}(z)
    =
    \inf_{
        g \in (0,g_{\mu \boxplus \nu}^*)
     : z-\supp_{-}\nu < \frac{\beta}{1+g}
    }
    \left\{ 
        \beta \log(1+g)-\frac{\beta g}{1+g}
        +
        \int\log\left(\lambda-z+\frac{\beta}{1+g}\right)\nu(d\lambda)
    \right\}
    \,\,\,\,\,
    \forall z<z_{\mu \boxplus \nu}^*.
\end{equation}

Furthermore, letting $E_{\musc,\nu,z}(g)$ resp. $E_{\muMP,\nu,z}(g)$ denote the expression in the brackets of \eqref{eq:special_case_sc} resp. 
\eqref{eq:special_case_MP}, it holds for both $\mu = {\musc[\beta]}$ and $\mu = \muMP$ that:
\begin{equation}\label{eq:special_case_sc_MP_E_crit_points}
    \begin{array}{cl}
        z>z_{\mu\boxplus\nu}^*\implies & E_{\mu,\nu,z}\text{ has no crit. points.}\\
        z=z_{\mu\boxplus\nu}^*\implies & E_{\mu,\nu,z}\text{ has a unique crit. point at } g = g^*_{\mu\boxplus\nu},\text{ which is an inflection point.}\\
        z<z_{\mu\boxplus\nu}^*\implies & E_{\mu,\nu,z}\text{ has a unique loc. min. at }g=G_{\mu\boxplus\nu}(z)\text{, and at most one additional}\\
         & \text{crit. point, which if present is a loc. max. in }(G_{\mu\boxplus\nu}(z),\infty) \text{ (see Figure \ref{fig:intro_E_mu_nu_z_F_mu_nu_shapes1} a,b).}
    \end{array}
\end{equation}
The implications in \eqref{eq:special_case_sc_MP_E_crit_points} can be used to ``read off'' the quantities 
$z_{\mu \boxplus \nu}^*,g^*_{\mu \boxplus \nu}$, which appear in the formulas \eqref{eq:special_case_sc}-\eqref{eq:special_case_MP}, from $E_{\mu,\nu,z}$. They can alternatively be computed via the formulas
\begin{equation}\label{eq:end_point_etc_special_cases_sc_MP}
    z_{\mu \boxplus \nu}^*
    =
    \sup_{h\in(-\infty,\supp_{-}\nu)}F_{\mu,\nu}(h)
    ,\quad\quad\text{and}\quad\quad
    g_{\mu \boxplus \nu}^*
    =
    G_{\nu}\left(
        \underset{h\in(-\infty,\supp_{-}\nu)}{\argsup}F_{\mu,\nu}(h)
    \right),
\end{equation}
where 
\begin{equation}\label{eq:F_mu_nu_special_cases_sc_MP}
    F_{\musc[\beta],\nu}(h) = h -  \beta^2 G_{\nu}(h),
    \quad\quad\text{and}\quad\quad
    F_{\muMP,\nu}(h) = h + \frac{\beta}{1+G_{\nu}(h)}.
\end{equation}
Furthermore, the number, nature and position of the critical points of $E_{\mu,\nu,z}$ are related to solutions of $F_{\mu,\nu}(h) = z$ (see Proposition \ref{prop:Emunu_props} below).

Our variational formula \eqref{eq:special_case_sc} can be viewed as an extension and generalization of \cite[Lemma 4.3]{mon}. The latter is essentially an upper bound on $U_{\musc[\beta],\nu}(z)$, for a special $\nu$ arising in the computation of the expected number of critical points of a certain energy surface in the Sherrington-Kirkpatrick model of spin glasses (i.e. the model's ``annealed complexity''). Though the argument of \cite{mon} was the starting point of our investigation of the topic, the eventual proof of our  main result (i.e. Theorem \ref{TH:main} below) takes quite a different route from \cite{mon}. We use \eqref{eq:special_case_sc} and \eqref{eq:special_case_sc_MP_E_crit_points}
in our work \cite{noi} 
about the random determinant appearing in a computation of the expected number of critical points similar to that of \cite{mon}.

\subsection{General variational formula}

The formulas \eqref{eq:special_case_sc}-\eqref{eq:special_case_MP} are special cases of our general result, which holds for any compactly supported $\mu,\nu$, and which we now formulate. Allowing a slight abuse of notation, the formula \eqref{eq:R-transf} also defines $R_\mu$ as a real analytic function. Indeed, $G_\mu$ restricted to $ [\supp_{-}\mu, \supp_{+}\mu]^c \subset \mathbb{R}$ is real analytic, strictly monotone and therefore invertible. Letting the inverse on the r.h.s. of \eqref{eq:R-transf} denote the inverse of this restriction, we obtain a real analytic function $R_\mu(t)$ with domain
\begin{equation}\label{eq:def_Dhatmu_intro}
    \hat{D}_\mu 
    :=
    (-G_\mu(\supp_-\mu),-G_\mu(\supp_+\mu))\,.
\end{equation}    
Our general variational formula for the logarithmic potential of $\mu \boxplus \nu$  involves the function
\begin{equation}\label{eq:E_def}
    E_{\mu,\nu,z}(g)
    :=
    \int_{0}^{g}s R_\mu'(-s) ds
    +
    \int \log(\lambda-z+R_\mu(-g)) \nu(d\lambda),
\end{equation}
with domain
\begin{equation}\label{eq:E_func_dom_Ehat}
    \hat{\mathcal{E}}_{\mu,\nu,z} := \{ g\in -\hat{D}_\mu: z-R_\mu(-g)< \supp_{-}\nu\}.
\end{equation}
Note that the second term on the r.h.s. of \eqref{eq:E_def} is $U_\nu(z - R_\mu(-g))$. For many probability measures $\mu$, the real analytic function $R_\mu$ can be further analytically extended, and if so there is a unique extension to a maximal real interval $D_\mu $ 
around zero (see Lemma \ref{lem:real_R_trans_extend}; for instance for the semicircle law $R_{\musc[\beta]}(t) = t \beta^2$ and $D_{\musc[\beta]} = \mathbb{R}$, while $\hat{D}_{\musc[\beta]} = (-\beta,\beta)$).
Using this extension the function \eqref{eq:E_def} can be extended to the domain
\begin{equation}\label{eq:E_func_dom_no_hat}
    \mathcal{E}_{\mu,\nu,z} := \{g \in -D_\mu: z - R_\mu(-g) < \supp_{-} \nu \}.
\end{equation}
Our main result is the following.
\begin{theorem}\label{TH:main}
    Assume that $\mu,\nu$ are compactly supported probability measures on $\mathbb{R}$, s.t. $\mu$ is non-degenerate, and recall the definitions \eqref{eq:zstar_gstar_def} and \eqref{eq:E_def}-\eqref{eq:E_func_dom_no_hat}.
    For all real $z<z^*_{\mu \boxplus \nu}$
    \be\label{eq:main}
        U_{\mu\boxplus\nu}(z)
        =
        \inf_{g \in \hat{\mathcal{E}}_{\mu,\nu,z} \cap (0, g^*_{\mu \boxplus \nu}) }
        E_{\mu,\nu,z}(g)
        =
        \inf_{g \in \mathcal{E}_{\mu,\nu,z} \cap (0,g_{\mu \boxplus \nu}^*) }
        E_{\mu,\nu,z}(g).
    \ee
    Furthermore, the unique minimizer of both infima is $g=G_{\mu\boxplus\nu}(z)$.
\end{theorem}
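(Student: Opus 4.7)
The plan is to treat \eqref{eq:main} as a variational problem: identify the critical points of $E_{\mu,\nu,z}(g)$, show the unique interior one is a strict local (and hence global) minimum, and then match its value to $U_{\mu\boxplus\nu}(z)$ via an envelope-theorem identity together with asymptotic matching as $z\to-\infty$. Differentiating \eqref{eq:E_def} gives
\[
\partial_g E_{\mu,\nu,z}(g) = R_\mu'(-g)\bigl(g - G_\nu(z - R_\mu(-g))\bigr).
\]
For non-degenerate $\mu$, $R_\mu'(0)=\Var(\mu)>0$; assuming $R_\mu'>0$ on the interval of interest, the critical-point equation $g=G_\nu(z-R_\mu(-g))$ can be inverted via $G_\nu^{\langle-1\rangle}(g)=R_\nu(-g)-1/g$ (the defining identity \eqref{eq:R-transf} for $R_\nu$) and the additivity \eqref{eq:sumR} to
\[
z = R_\mu(-g)+R_\nu(-g)-1/g = R_{\mu\boxplus\nu}(-g)-1/g = G_{\mu\boxplus\nu}^{\langle-1\rangle}(g).
\]
Strict monotonicity of $G_{\mu\boxplus\nu}\colon(-\infty,z^*_{\mu\boxplus\nu})\to(0,g^*_{\mu\boxplus\nu})$ then identifies the unique interior critical point as $g^\star(z):=G_{\mu\boxplus\nu}(z)$.

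Next, a direct calculation gives
\[
E''(g^\star)=R_\mu'(-g^\star)\bigl[1+R_\mu'(-g^\star)\,G_\nu'(z-R_\mu(-g^\star))\bigr]>0
\]
(using that $G_\nu'>0$ strictly below $\supp_{-}\nu$), so $g^\star$ is a strict local minimum; combined with uniqueness this forces $\partial_g E_{\mu,\nu,z}$ to be negative on $(0,g^\star)$ and positive on $(g^\star,g^*_{\mu\boxplus\nu})$, so $E_{\mu,\nu,z}$ strictly decreases then strictly increases, and $g^\star$ is the unique global minimizer of the first infimum in \eqref{eq:main}. Letting $V(z):=E_{\mu,\nu,z}(g^\star(z))$, the envelope theorem together with the critical-point identity $G_\nu(z-R_\mu(-g^\star))=g^\star$ yields
\[
V'(z) = \partial_z E_{\mu,\nu,z}(g)\big|_{g=g^\star} = -G_\nu(z-R_\mu(-g^\star)) = -G_{\mu\boxplus\nu}(z) = \tfrac{d}{dz}U_{\mu\boxplus\nu}(z),
\]
so $V-U_{\mu\boxplus\nu}$ is constant on $(-\infty,z^*_{\mu\boxplus\nu})$. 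Sending $z\to-\infty$, $g^\star\to 0^+$ and $\int_0^{g^\star}sR_\mu'(-s)\,ds\to 0$, so $V(z)=U_\nu(z-R_\mu(0))+o(1)=\log|z|-(m_1(\mu)+m_1(\nu))/z+O(z^{-2})$, which matches $U_{\mu\boxplus\nu}(z)=\log|z|-m_1(\mu\boxplus\nu)/z+O(z^{-2})$ by additivity of the first moment under $\boxplus$; the constant is therefore $0$. The equality of the two infima in \eqref{eq:main} follows once one verifies $g^\star\in\hat{\mathcal{E}}_{\mu,\nu,z}$ and that the same critical-point analysis applies to the extended function $E_{\mu,\nu,z}$ on $\mathcal{E}_{\mu,\nu,z}$; the former reduces via the Voiculescu subordination $\omega(z):=z-R_\mu(-g^\star)$ to the inequalities $g^*_{\mu\boxplus\nu}\le g^*_\mu\wedge g^*_\nu$ and $\omega(z)<\supp_{-}\nu$ for $z<z^*_{\mu\boxplus\nu}$.

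The main obstacle is the global-minimum step for a generic $\mu$: if $R_\mu'$ vanishes somewhere on $-\hat D_\mu$, or if the analytic continuation of $R_\mu$ onto $-D_\mu\setminus -\hat D_\mu$ produces spurious critical points of the extended $E_{\mu,\nu,z}$, the sign analysis of $\partial_g E$ and the extension argument need extra care. Closely related is the boundary behavior of $E_{\mu,\nu,z}$ when $z-R_\mu(-g)\uparrow\supp_{-}\nu$ (a logarithmic singularity of the second term) or when $g$ approaches the edge of the analyticity domain of $R_\mu$: one must verify $E_{\mu,\nu,z}\ge E_{\mu,\nu,z}(g^\star)$ there. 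I expect both issues to be handled by strict monotonicity of $G_{\mu\boxplus\nu}^{\langle-1\rangle}$ on $(0,g^*_{\mu\boxplus\nu})$ together with the strict subordination bound $\omega(z)<\supp_{-}\nu$ for $z<z^*_{\mu\boxplus\nu}$; the structural statement \eqref{eq:special_case_sc_MP_E_crit_points} already gives a helpful blueprint of the critical-point geometry to aim for in the general case.
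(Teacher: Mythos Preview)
Your overall strategy is the same as the paper's: differentiate $E_{\mu,\nu,z}$, identify the critical point via the $R$-transform identities as $g^\star=G_{\mu\boxplus\nu}(z)$, argue it is the global minimum on $(0,g^*_{\mu\boxplus\nu})$, and then use an envelope--type computation plus $z\to-\infty$ asymptotic matching to identify the value with $U_{\mu\boxplus\nu}(z)$. The paper carries this out in Lemmas~\ref{lem:E_mu_nu_z_deriv}--\ref{lem:G+_z_G+_z_deriv_and_asymp} and Corollary~\ref{cor:U_+(z)_and_E_mu_nu(z,G_+(z))_ident}; your verification that $g^\star\in\hat{\mathcal E}_{\mu,\nu,z}$ via subordination and the inequality $g^*_{\mu\boxplus\nu}\le g^*_\mu\wedge g^*_\nu$ is exactly Lemma~\ref{lem:fp_diff_sign}~\ref{<lem:G_sign>_b)} combined with \eqref{eq:add_conv_G_at_endpoint_ineq}.

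There is one genuine slip. Your second-derivative formula has the wrong sign: differentiating $E'(g)=R_\mu'(-g)\bigl(g-G_\nu(z-R_\mu(-g))\bigr)$ and using $\partial_g\bigl(z-R_\mu(-g)\bigr)=R_\mu'(-g)$ gives, at the critical point,
\[
E''(g^\star)=R_\mu'(-g^\star)\bigl[\,1-R_\mu'(-g^\star)\,G_\nu'\bigl(z-R_\mu(-g^\star)\bigr)\bigr],
\]
with a \emph{minus}, not a plus. So positivity is no longer automatic from $G_\nu'>0$; you would need the extra fact $R_\mu'(-g^\star)G_\nu'(\omega(z))<1$, which is true (it follows from $G_{\mu\boxplus\nu}'(z)>0$ via implicit differentiation of the fixed-point equation) but requires a separate argument. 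The paper sidesteps this entirely: instead of a second-derivative test it proves the sign identity
\[
\sign\bigl(E_{\mu,\nu,z}'(g)\bigr)=\sign\bigl(G_{\mu\boxplus\nu}^{\inv}(g)-z\bigr)=\sign\bigl(g-G_{\mu\boxplus\nu}(z)\bigr)
\quad\text{for }g\in(0,g^*_{\mu\boxplus\nu}),
\]
(Lemma~\ref{lem:fp_diff_sign}~\ref{<lem:G_sign>_a)},\ref{item:D_+[lem:fp_diff_sign]}), using only that $R_\mu'>0$ on $-\hat D_\mu\supset(0,g^*_{\mu\boxplus\nu})$ (Lemma~\ref{lemma:Rmu_increasing} plus \eqref{eq:D_sandwich}) and the strict monotonicity of $G_{\mu\boxplus\nu}^{\inv}$. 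This simultaneously gives uniqueness of the critical point and the global-minimum property, and also handles the extended domain $\mathcal E_{\mu,\nu,z}$ without worrying about spurious zeros of $R_\mu'$ or boundary behavior, since the restriction to $(0,g^*_{\mu\boxplus\nu})$ already lies inside $-\hat D_\mu$ where $R_\mu'>0$ is guaranteed. Replacing your second-derivative step with this sign argument closes the gap cleanly.
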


\begin{remark}
    a) Theorem \ref{thm:add_conv_supp_end_point} below provides formulas for the quantities $z_{\mu\boxplus\nu}^*$ and  $g_{\mu\boxplus\nu}^*$ that appear in the statement of Theorem \ref{TH:main}.

    b) In \eqref{eq:main} two variants of the variational formula are given; the first one restricts the minimization to the smaller set $\hat{\mathcal{E}}_{\mu,\nu,z}$, and the second restricts it to the larger set $\mathcal{E}_{\mu,\nu,z}$.
    
    To determine the smaller set $\hat{\mathcal{E}}_{\mu,\nu,z}$ and use the first variant in \eqref{eq:main}, one needs to take into account the values of $G_\mu(\supp_{\pm}\mu)$, which define the domain $\hat{D}_\mu$ and thus also $\hat{\mathcal{E}}_{\mu,\nu,z}$ (see \eqref{eq:E_func_dom_Ehat}).
    
    To determine the larger set $\mathcal{E}_{\mu,\nu,z}$ and use the second variant in \eqref{eq:main}, one needs to take into account the domain $D_\mu$ to which $R_\mu$ can be analytically extended (see \eqref{eq:E_func_dom_no_hat}). When a probability distribution is defined in terms of its $R$-transform, then typically the extended domain $D_\mu$ is obtained somewhat more directly - e.g. for the semicircle law $R_{\musc[\beta]}(t) = t\beta^2$ and $D_{\musc[\beta]}=\mathbb{R}$, while $\hat{D}_{\musc[\beta]} = (-\beta,\beta)$, as already mentioned, and for the Marchenko-Pastur law $R_\muMP(t) = \beta (1-t)^{-1}$ and $D_\muMP=(-\infty,1)$, while $\hat{D}_\muMP$ is the more complicated expression in \eqref{eq:R_trans_MP_special_case}. In such cases the second variant in \eqref{eq:main} takes a simpler form.
\end{remark}

\subsubsection{Overview of the proof of Theorem \ref{TH:main}}

The main step in the proof of Theorem \ref{TH:main} is proving the identity
\begin{equation}\label{eq:intro_inf_E_is_E_of_G+}
    \inf_{g \in \mathcal{E}_{\mu,\nu,z} \cap (0,g_{\mu \boxplus \nu}^*) } E_{\mu,\nu,z}(g)
    =
    E_{\mu,\nu,z}(G_{\mu\boxplus\nu}(z))
    \quad\forall\, z < z_{\mu\boxplus\nu}^*. \tag{\it I}
\end{equation}
The proof of \eqref{eq:intro_inf_E_is_E_of_G+} is based on the formula
\begin{equation}\label{eq:intro_E_deriv}
        E_{\mu,\nu,z}'(g)
        =
        R_\mu'(-g)
        \left(
            g
            -
            \int\frac{1}{\lambda - z + R_\mu(-g)} \nu(d\lambda)
        \right),
\end{equation}
which is derived straight-forwardly from \eqref{eq:E_def} (see Lemma \ref{lem:E_mu_nu_z_deriv}). Consider the fixed-point equation
\begin{equation}\label{eq:intro_fp}
    \int\frac{1}{\lambda-z+R_\mu(-g)}\nu(d\lambda) = g,
\end{equation}
cf. the second factor on the r.h.s. of \eqref{eq:intro_E_deriv}. The fixed-point equation \eqref{eq:intro_fp} has sometimes been used to define the additive convolution, at least when $\mu$ is the semicircle law (see \cite[(1.6)]{pastur}). A special case also appears in \cite[(4.4)]{mon}. Indeed, one can show that for $z < z_{\mu \boxplus \nu}^*$, the number $g = G_{\mu \boxplus \nu}(z)$ is the unique solution of \eqref{eq:intro_fp} in $(0,g^*_{\mu \boxplus \nu})$. We will show this by using the definitions \eqref{eq:defSti}-\eqref{eq:sumR} to derive the equivalences
\begin{equation}\label{eq:intro_fp_iff}
    \begin{array}{ccccl}
        \eqref{eq:intro_fp} & \overset{\eqref{eq:defSti}}{\iff} & G_{\nu}(z - R_\mu(-g)) = g & \iff & z - R_\mu(-g) = G_{\nu}^{\inv}(g)\\
        & & & \overset{\eqref{eq:R-transf}}{\iff} & z - R_\mu(-g) = R_\nu(-g) - g^{-1}\\
         &  &  & \overset{\eqref{eq:sumR}}{\iff} & z = R_{\mu \boxplus \nu}(-g) - g^{-1}\\
         &  &  & \overset{\eqref{eq:R-transf}}{\iff} & z = G_{\mu\boxplus\nu}^{\inv}(g),
    \end{array}
\end{equation}
valid for $g \in (0,g^*_{\mu \boxplus\nu})$ and $z < \supp_- \nu$ (see Lemma \ref{lem:fp_diff_sign}). The inverse $G_{\mu\boxplus\nu}^{\inv}$ is monotone since $G_{\mu\boxplus\nu}$ is, so the equation $z = G_{\mu\boxplus\nu}^{\inv}(g)$ has the unique solution $g = G_{\mu\boxplus\nu}(z)$ in the relevant part of the domain of $G_{\mu\boxplus\nu}$, which is $(0,g^*_{\mu\boxplus\nu})$. This is then also the unique solution of the fixed-point equation \eqref{eq:intro_fp} in that range. Using this in \eqref{eq:intro_E_deriv}, we will conclude that $g=G_{\mu \boxplus \nu}(z)$ is a critical point of $E_{\mu,\nu,z}$ when $z < z^*_{\mu \boxplus \nu}$. Furthermore, one can verify that
\begin{equation}\label{eq:intro_R_deriv_pos}
    R_\mu'(-g) \text{ is positive on }(0,g^*_{\mu\boxplus\nu})
\end{equation}
(Lemma \ref{lemma:Rmu_increasing} and \eqref{eq:D_sandwich}), which will allow us to conclude that $g=G_{\mu \boxplus \nu}(z)$
is the unique critical point of $E_{\mu,\nu,z}$ in $(0,g^*_{\mu\boxplus\nu})$. Lastly, \eqref{eq:intro_fp_iff} can be extended to the identity
\begin{equation}\label{eq:intro_fp_sign}
    \sign\left(
        g - \int\frac{1}{\lambda-z+R_\mu(-g)}\nu(d\lambda)
    \right)
    =
    \sign( G_{\mu\boxplus\nu}^{\inv}(g) - z ),
\end{equation}
also valid for $g \in (0,g^*_{\mu \boxplus\nu}), z < \supp_- \nu$. Combining \eqref{eq:intro_E_deriv}, \eqref{eq:intro_R_deriv_pos}-\eqref{eq:intro_fp_sign} with the aforementioned monotonicity of $G_{\mu\boxplus\nu}^{\inv}$ will allow us to argue that $g=G_{\mu \boxplus \nu}(z)$ is a local minimum of $E_{\mu,\nu,z}$. Since it is also the unique critical point in $(0,g_{\mu \boxplus \nu}^*)$, we will be able to deduce \eqref{eq:intro_inf_E_is_E_of_G+}.

Lastly, it is straight-forward to prove the identity
\begin{equation}\label{eq:intro_E_of_G+}
    E_{\mu,\nu,z}(G_{\mu \boxplus \nu}(z))
    =
    U_{\mu \boxplus \nu}(z)
    \quad
    \forall\, z < z_{\mu,\nu}^*, \tag{{\it II}}
\end{equation}
essentially by verifying that both sides of \eqref{eq:intro_E_of_G+} have the same
derivative in $z$, using the chain rule and that $g=G_{\mu \boxplus \nu}(z)$ is a critical point of $E_{\mu,\nu,z}$ (see Corollary \ref{cor:U_+(z)_and_E_mu_nu(z,G_+(z))_ident}).

We will then conclude the proof of \eqref{eq:main} by simply combining \eqref{eq:intro_inf_E_is_E_of_G+} and \eqref{eq:intro_E_of_G+}.

The sketch above, and indeed the proofs of all our main results (excluding Theorem \ref{thm:stieltjes_global_invert}), are based purely on the properties of the Stieltjes and $R$-transforms as functions on the real line, rather than as holomorphic functions in a complex domain. Only when connecting the $R$-transform as a real function to the standard theory of $R$-transforms in Section \ref{sect:R_transform_theory} do we take the latter point of view

\subsection{Spurious $g \notin [0, g_{\mu\boxplus\nu}^*]$ with $E_{\mu,\nu,z}(z) < U_{\mu \boxplus \nu}(z)$}

The fixed-point equation \eqref{eq:intro_fp} can have solutions $g$ s.t. $g>g^*_{\mu\boxplus\nu}$, and it is also possible that $R_\mu'(-g) = 0$ for $g$ outside $(0,g^*_{\mu\boxplus\nu})$. Such $g$ give rise to ``spurious'' critical points of $E_{\mu,\nu,z}$, which can be local minima, local maxima or inflection points, and for which the value of $E_{\mu,\nu,z}(g)$ may be lower than
$U_{\mu \boxplus \nu}(z) = E_{\mu,\nu,z}(G_{\mu\boxplus\nu}(z))$ (see Figure \ref{fig:intro_E_mu_nu_z_F_mu_nu_shapes1} c). In addition, the global minimum of the function $E_{\mu,\nu,z}$ may lie at the right endpoint of its support, either as a finite number, or because $E_{\mu,\nu,z}$ diverges to $-\infty$ approaching that endpoint (see Figure \ref{fig:intro_E_mu_nu_z_F_mu_nu_shapes1} b). The possible presence of these ``spurious'' $g$ with values of $E_{\mu,\nu,z}(g)$ below $U_{\mu \boxplus \nu}(z)$ makes it necessary to exclude $g \notin [0, g^*_{\mu\boxplus\nu}]$ in the infima in \eqref{eq:main}. It is also possible for $E_{\mu,\nu,z}$ to have critical points when $z > z^*_{\mu \boxplus \nu}$ (see Figure \ref{fig:intro_E_mu_nu_z_F_mu_nu_shapes1} d). In particular, \eqref{eq:special_case_sc_MP_E_crit_points} does not hold in general, so it cannot always be used to characterize $z_{\mu \boxplus \nu}^*,g_{\mu \boxplus \nu}^*$.

\subsection{General characterization of $z_{\mu \boxplus \nu}^*$ and $g_{\mu \boxplus \nu}^*$}

To apply Theorem \ref{TH:main} one needs to know the values of $z_{\mu \boxplus \nu}^*,g_{\mu \boxplus \nu}^*$ from \eqref{eq:zstar_gstar_def},
which are properties of $\mu \boxplus \nu$ that one typically does not have direct access to in situations where the variational formula \eqref{eq:main} may be of use. One can characterize $z_{\mu,\nu}^*,g^*_{\mu \boxplus \nu}$ in terms of the fixed point equation \eqref{eq:intro_fp}, or in terms of $R_\mu$ and $R_\nu$. The most ``user-friendly'' characterization is arguably the following one in terms of $R_\mu$ and $G_\nu$. Note that one {\it would} typically have some control over $R_\mu$ and $G_\nu$ in situations where one has such control over $E_{\mu,\nu,z}$ from \eqref{eq:E_def}, which are likely to be the situations in which \eqref{eq:main} is of practical use. 
Let
\begin{equation}\label{eq:F_func_def}
    F_{\mu,\nu}(h)
    =
    R_\mu(-G_{\nu}(h)) + h
    \quad \text{ for }\quad
    \dom(F_{\mu,\nu}) =
    \left\{
        h\in(-\infty,\supp_{-}\nu):
        G_{\nu}(h)
        \in
        - D_\mu
    \right\},
\end{equation}
\begin{equation}\label{eq:mathcal_H_mu_nu_def}
    \mathcal{H}_{\mu,\nu} = \left\{ h\in \dom(F_{\mu,\nu}):F_{\mu,\nu}'(t)>0\text{ for all }t\in(-\infty,h)\right\},
    \quad\quad
     h_{\mu,\nu}^* := \sup \mathcal{H}_{\mu,\nu}.
\end{equation}
The definition \eqref{eq:mathcal_H_mu_nu_def} expresses as a formula that $h_{\mu,\nu}^*$ is the leftmost critical point of $F_{\mu,\nu}$ if such a point exists, and otherwise $h_{\mu,\nu}^*$ is the right endpoint of $\dom(F_{\mu,\nu})$ (see Proposition \ref{prop:Fmunu_props} \ref{item:leftmost_crit_point[prop:Fmunu_props]},\ref{item:if_no_crit_point[prop:Fmunu_props]} below).
\begin{theorem}\label{thm:add_conv_supp_end_point}
    Let $\mu,\nu$ be compactly supported probability measures on $\mathbb{R}$, s.t. $\mu$ is non-degenerate, and recall \eqref{eq:F_func_def}-\eqref{eq:mathcal_H_mu_nu_def} and the notation \eqref{eq:zstar_gstar_def}. Then

    \begin{enumerate}[label=\alph*)]
        \item \label{thm:add_conv_supp_end_point-item:dom_F_interval} $\dom(F_{\mu,\nu})$ is an interval of the form
        $(-\infty,h_{\max})$. 
        
        \item \label{thm:add_conv_supp_end_point-item:F_inc} $F_{\mu,\nu}$ is increasing for small enough $h$.
    
        \item \label{thm:add_conv_supp_end_point-item:g_star}
            \begin{equation}\label{eq:gstar[thm:add_conv_supp_end_point]} 
                g^*_{\mu \boxplus \nu} =
                G_\nu( h_{\mu,\nu}^* ).
            \end{equation}
    
        \item
        \label{thm:add_conv_supp_end_point-item:z_star}
            \begin{equation}\label{eq:z_star[thm:add_conv_supp_end_point]}
                z_{\mu \boxplus \nu}^*
                =
                F_{\mu,\nu}(h_{\mu,\nu}^*)
                =
                \sup_{h \in \mathcal{H}_{\mu,\nu}}
                F_{\mu,\nu}(h),
            \end{equation}
            (where if $h_{\mu,\nu}^* \notin \dom(F_{\mu,\nu})$, then $F_{\mu,\nu}(h_{\mu,\nu}^*)$ denotes the limit $\lim_{h \uparrow h_{\mu,\nu}^*} F_{\mu,\nu}(h)$, which exists).
        \item \label{thm:add_conv_supp_end_point-item:G_nu(h_star)_in_closure}
            $G_\nu(h_{\mu,\nu}^*)$ lies in the closure of $-\hat{D}_\mu$.
        
        \item \label{thm:add_conv_supp_end_point-item:D_hat}
            All the above remain true if $D_\mu$ in \eqref{eq:F_func_def} is replaced by the possibly smaller interval $\hat{D}_\mu$ from \eqref{eq:def_Dhatmu_intro}.
    \end{enumerate}
\end{theorem}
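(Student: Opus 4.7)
The plan is to dispatch (a) and (b) as short direct calculations, and then unify (c)--(e) by parametrizing the inverse Stieltjes transform of $\mu\boxplus\nu$ through $g=G_\nu(h)$; item (f) will follow essentially for free.

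For (a), the set $\dom(F_{\mu,\nu})=\{h<\supp_{-}\nu:G_\nu(h)\in -D_\mu\}$ is the preimage of an open interval around $0$ under $G_\nu$ restricted to $(-\infty,\supp_{-}\nu)$. Since $G_\nu$ is strictly increasing on that interval with $G_\nu(h)\to 0^+$ as $h\to-\infty$, the condition $G_\nu(h)\in -D_\mu$ reduces to $G_\nu(h)<\sup(-D_\mu)$, carving out an initial interval $(-\infty,h_{\max})$. For (b), differentiating $F_{\mu,\nu}(h)=R_\mu(-G_\nu(h))+h$ gives
\begin{equation*}
F_{\mu,\nu}'(h)=1-R_\mu'(-G_\nu(h))\,G_\nu'(h);
\end{equation*}
as $h\to-\infty$, both $G_\nu(h)\to 0$ and $G_\nu'(h)=\int(\lambda-h)^{-2}\nu(d\lambda)\to 0$, while $R_\mu'$ is continuous at $0$ (as $R_\mu$ is real analytic on $D_\mu\ni 0$), so $F_{\mu,\nu}'(h)\to 1$ and (b) follows by continuity.

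The central identity driving (c)--(e) is
\begin{equation*}
F_{\mu,\nu}(h)=G_{\mu\boxplus\nu}^{\inv}(G_\nu(h)) \quad \text{for all } h\in\dom(F_{\mu,\nu}) \text{ with } G_\nu(h)\in(0,g_{\mu\boxplus\nu}^*),
\end{equation*}
which I would derive from $R_{\mu\boxplus\nu}=R_\mu+R_\nu$ in \eqref{eq:sumR}, exactly as in the chain \eqref{eq:intro_fp_iff}: setting $g=G_\nu(h)$,
\begin{equation*}
G_{\mu\boxplus\nu}^{\inv}(g)=R_{\mu\boxplus\nu}(-g)-g^{-1}=R_\mu(-g)+R_\nu(-g)-g^{-1}=R_\mu(-g)+G_\nu^{\inv}(g)=F_{\mu,\nu}(h).
\end{equation*}
Since both $G_\nu$ and $G_{\mu\boxplus\nu}^{\inv}$ are strictly increasing on their respective domains, $F_{\mu,\nu}$ is strictly increasing with $F_{\mu,\nu}'>0$ on this range; in particular $\{h\in\dom(F_{\mu,\nu}):G_\nu(h)<g_{\mu\boxplus\nu}^*\}\subseteq\mathcal{H}_{\mu,\nu}$, yielding $h_{\mu,\nu}^*\geq G_\nu^{\inv}(g_{\mu\boxplus\nu}^*)$, and the left limit of the identity gives $\lim_{h\uparrow G_\nu^{\inv}(g_{\mu\boxplus\nu}^*)}F_{\mu,\nu}(h)=z_{\mu\boxplus\nu}^*$.

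The main obstacle is the reverse inequality $h_{\mu,\nu}^*\leq G_\nu^{\inv}(g_{\mu\boxplus\nu}^*)$. If the right-hand side equals the right endpoint $h_{\max}$ of $\dom(F_{\mu,\nu})$, this is automatic; otherwise, setting $h_*:=G_\nu^{\inv}(g_{\mu\boxplus\nu}^*)\in\dom(F_{\mu,\nu})$, differentiating the identity and letting $g\uparrow g_{\mu\boxplus\nu}^*$ yields
\begin{equation*}
F_{\mu,\nu}'(h_*)=\frac{G_\nu'(h_*)}{\lim_{z\uparrow z_{\mu\boxplus\nu}^*}G_{\mu\boxplus\nu}'(z)}.
\end{equation*}
Concluding $F_{\mu,\nu}'(h_*)=0$, so that $h_*$ is a critical point of $F_{\mu,\nu}$ (hence $h_{\mu,\nu}^*\leq h_*$), requires the edge input $\lim_{z\uparrow z_{\mu\boxplus\nu}^*}G_{\mu\boxplus\nu}'(z)=+\infty$ whenever $g_{\mu\boxplus\nu}^*<\infty$; this square-root vanishing of the density at the edge of a free additive convolution (with non-degenerate $\mu$) is the main technical ingredient and will presumably be imported from the complex-analytic portion of the paper around Theorem \ref{thm:stieltjes_global_invert}. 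Once established, (c) follows, and (d) is obtained by substituting $h=h_{\mu,\nu}^*$ into the key identity. Part (e) amounts to $g_{\mu\boxplus\nu}^*\leq G_\mu(\supp_{-}\mu)=\sup(-\hat{D}_\mu)$, which can be extracted by combining $(G_{\mu\boxplus\nu}^{\inv})'(g)=-R_\mu'(-g)+(G_\nu^{\inv})'(g)>0$ on $(0,g_{\mu\boxplus\nu}^*)$ with the behavior of $R_\mu'$ near the boundary of $\hat{D}_\mu$ furnished by Lemma \ref{lemma:Rmu_increasing}. Finally, (f) is immediate because the identity and all derivative computations use $R_\mu$ only on $-\hat{D}_\mu$, so the entire argument runs identically with $D_\mu$ replaced by $\hat{D}_\mu$.
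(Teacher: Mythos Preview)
Your overall strategy --- parts (a), (b), the key identity $F_{\mu,\nu}(h)=G_{\mu\boxplus\nu}^{\inv}(G_\nu(h))$ derived from $R_{\mu\boxplus\nu}=R_\mu+R_\nu$, and the change of variables $g=G_\nu(h)$ --- matches the paper's approach in Section~\ref{sect:end_point_proof} closely. The paper packages this through the auxiliary function $J_{\mu,\nu}(g)=R_\mu(-g)+R_\nu(-g)-g^{-1}$ and Corollary~\ref{cor:Ginv_and_endpoint}, but the content is the same.

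However, your argument for the reverse inequality $h_{\mu,\nu}^*\le G_\nu^{\inv}(g_{\mu\boxplus\nu}^*)$ has a genuine gap. You assume the ``edge input'' $\lim_{z\uparrow z_{\mu\boxplus\nu}^*}G_{\mu\boxplus\nu}'(z)=+\infty$ whenever $g_{\mu\boxplus\nu}^*<\infty$, and guess it comes from Theorem~\ref{thm:stieltjes_global_invert}. This is false in general: take $\nu=\delta_0$ and $\mu$ any non-degenerate measure with $G_\mu'(\supp_-\mu)<\infty$ (e.g.\ the rightmost example in Figure~\ref{fig:Ginv}), so that $\mu\boxplus\nu=\mu$ and both $g_{\mu\boxplus\nu}^*$ and $G_{\mu\boxplus\nu}'(z_{\mu\boxplus\nu}^*)$ are finite. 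Moreover, the paper explicitly states that Theorem~\ref{thm:stieltjes_global_invert} is \emph{not} used in the main proofs. The paper instead handles this case via Lemma~\ref{lem:Gnotanal} and Lemma~\ref{lem:Ginv_props}~\ref{item:5[lem:Ginv_props]}: if $G_{\mu\boxplus\nu}'(z_{\mu\boxplus\nu}^*)<\infty$, then $G_{\mu\boxplus\nu}^{\inv}$ cannot be analytically continued past $g_{\mu\boxplus\nu}^*$ (else one could invert and analytically continue $G_{\mu\boxplus\nu}$ past $\supp_-\mu\boxplus\nu$, contradicting Lemma~\ref{lem:Gnotanal}). This forces $\sup(-D_{\mu\boxplus\nu})=g_{\mu\boxplus\nu}^*$, and since $D_\mu\cap D_\nu\subset D_{\mu\boxplus\nu}$ by \eqref{eq:D_sandwich}, the domain of $J_{\mu,\nu}$ (hence of $F_{\mu,\nu}$ after the change of variables) simply ends at $g_{\mu\boxplus\nu}^*$, giving the upper bound automatically.

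Your argument for (e) is also incomplete: Lemma~\ref{lemma:Rmu_increasing} only gives $R_\mu'>0$ on $\hat{D}_\mu$, with no control at the boundary, so you cannot extract $g_{\mu\boxplus\nu}^*\le G_\mu(\supp_-\mu)$ from it. The paper instead uses the inequality $G_{\mu\boxplus\nu}(\supp_-\mu\boxplus\nu)\le G_\mu(\supp_-\mu)$ from \cite[Lemma~6.1]{GM}, recorded as \eqref{eq:add_conv_G_at_endpoint_ineq}, which immediately gives $g_{\mu\boxplus\nu}^*\in{\rm cl}(-\hat{D}_{\mu\boxplus\nu})\subset{\rm cl}(-\hat{D}_\mu)$ via \eqref{eq:D_sandwich}.
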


\subsubsection{Overview of proof of Theorem \ref{thm:add_conv_supp_end_point}}

The proof of Theorem \ref{thm:add_conv_supp_end_point} is based on the change of variables $h = G_\nu^{\inv}(g)$,
which is useful since similarly to how \eqref{eq:intro_fp_iff} and \eqref{eq:intro_fp_sign} are derived, one can use the definitions \eqref{eq:R-transf}-\eqref{eq:sumR} to show that
\begin{equation}\label{eq:intro_F_G+_inv}
    F_{\mu,\nu}(G_{\nu}^{\inv}(g))
    =
    G_{\mu\boxplus\nu}^{\inv}(g)
    \quad\text{ for }\quad
    g \in (0,g^*_{\mu \boxplus \nu})
\end{equation}
(Lemma \ref{lem:fp_diff_sign}).
Thus $g \to F_{\mu,\nu}(G_{\nu}^{\inv}(g))$ is a real analytic
extension of $G_{\mu\boxplus\nu}^{\inv}$. The quantities $z_{\mu\boxplus\nu}^*,g_{\mu\boxplus\nu}^*$
are encoded in the ``shape'' of $G_{\mu\boxplus\nu}^{\inv}$: for
instance if
\begin{equation}\label{eq:G+_inv_shape_example_case}
    G_{\mu\boxplus\nu}(z_{\mu\boxplus\nu}^*)
    < 
    \infty
    = G_{\mu\boxplus\nu}'(z_{\mu\boxplus\nu}^*),
\end{equation}
then $g_{\mu\boxplus\nu}^*$ is the leftmost critical point of
$G_{\mu\boxplus\nu}^{\inv}$ and $z^*_{\mu \boxplus \nu} = G_{\mu\boxplus\nu}^{\inv}(g_{\mu\boxplus\nu}^*)$ (see Figure \ref{fig:intro_E_mu_nu_z_F_mu_nu_shapes1} b,c,d) - this is a consequence of $G_{\mu\boxplus\nu}^{\inv}$'s aforementioned monotonicity, and the basic fact
that if $(x,f(x))$ is the graph of a function $f$, then its ``mirror'' $(f(x),x)$ is the graph of the inverse of $f$. Using the reverse change of
variables $G_\nu(g) = h$, the aforementioned properties of $G_{\mu\boxplus\nu}^{\inv}$ translate to $h_{\mu,\nu}^* = G_{\nu}^{\inv}(g_{\mu\boxplus\nu}^*)$
being the leftmost critical point of $F_{\mu,\nu}$, and $F_{\mu,\nu}(h_{\mu,\nu}^*)=z_{\mu\boxplus\nu}^*$.
This is roughly speaking how \eqref{eq:gstar[thm:add_conv_supp_end_point]}-\eqref{eq:z_star[thm:add_conv_supp_end_point]} are proved when \eqref{eq:G+_inv_shape_example_case} holds
- mostly similar arguments apply in the other possible situations.

\subsection{Additional properties of $F_{\mu,\nu}$}

Usually $h_{\mu,\nu}^*$ is a local maximum of $F_{\mu,\nu}$ (see e.g. Figure \ref{fig:intro_E_mu_nu_z_F_mu_nu_shapes1} b,c,d), Proposition \ref{prop:Fmunu_props} \ref{item:BES_conds[prop:Fmunu_props]} and Remark \ref{rem:BES} below), and e.g. when $\mu = {\musc[\beta]}$ or $\mu = \muMP$ it is the global maximum (recall \eqref{eq:end_point_etc_special_cases_sc_MP}). It furthermore appears common that it is the unique critical point of $F_{\mu,\nu}$.
However, there are examples of $\mu,\nu$ for which $F_{\mu,\nu}$ has no critical points, or several critical points. Figure \ref{fig:intro_E_mu_nu_z_F_mu_nu_shapes1} contains some examples of the possible shapes of the function $F_{\mu,\nu}$.
The next proposition collects some facts about $F_{\mu,\nu}$, $h_{\mu,\nu}^*$ and \eqref{eq:gstar[thm:add_conv_supp_end_point]}-\eqref{eq:z_star[thm:add_conv_supp_end_point]}, including the aforementioned geometric description of $h_{\mu,\nu}^*$. It complements Theorem \ref{thm:add_conv_supp_end_point}. For any $z \in \mathbb{R}$, let
\begin{equation}\label{eq:h_mu_nu^<(z)_def}
        h_{\mu,\nu}^<(z)
        \text{ denote leftmost solution of }
        F_{\mu,\nu}(h) = z
        \text{ in }\dom(F_{\mu,\nu})
        \text{, if a solution exists,}
\end{equation}
and otherwise $h_{\mu,\nu}^<(z) = \sup \dom(F_{\mu,\nu})$.

\begin{proposition}\label{prop:Fmunu_props}
    If $\mu,\nu$ are compactly supported probability measures on $\mathbb{R}$, s.t. $\mu$ is non-degenerate, then the following holds.
    \begin{enumerate}[label=\alph*)]
        \item \label{item:leftmost_crit_point[prop:Fmunu_props]}
        If $F_{\mu,\nu}$ has a critical point, then $h_{\mu,\nu}^*$ is the leftmost critical point. 
        
        \item \label{item:if_no_crit_point[prop:Fmunu_props]} If $F_{\mu,\nu}$ has no critical points, then $h_{\mu,\nu}^* = \sup\dom( F_{\mu,\nu})$.
        
        \item \label{item:if_no_crit_point2[prop:Fmunu_props]} If $F_{\mu,\nu}$ has no critical points, then furthermore (i) $h_{\mu,\nu}^* = \supp_- \nu$
        and $g^*_{\mu\boxplus\nu} = G_\nu( \supp_- \nu )$, or (ii) $G_\mu( \supp_- \mu ) = G_\nu(h_{\mu,\nu}^*) = g^*_{\mu\boxplus\nu}$.

        \item \label{item:BES_conds[prop:Fmunu_props]} If there are constants $c_1,c_2>0$ s.t. $c_1^{-1}\varepsilon^{3/2}\le(\mu\boxplus\nu)(z_{\mu\boxplus\nu}^*,z_{\mu\boxplus\nu}^*+\varepsilon)\le c_1\varepsilon^{3/2}$ for all $\varepsilon \in (0,c_2)$, then $h_{\mu,\nu}^*$ is a local maximum of $F_{\mu,\nu}$ (see Figure \ref{fig:intro_E_mu_nu_z_F_mu_nu_shapes1} b,c,d).

        \item \label{item:F'_monotone[prop:Fmunu_props]}
        If $F_{\mu,\nu}$ is strictly concave, then $h_{\mu,\nu}^*$ is the global maximum of $F_{\mu,\nu}$, and \eqref{eq:end_point_etc_special_cases_sc_MP} holds for the given $\mu,\nu$
        (see Figure \ref{fig:intro_E_mu_nu_z_F_mu_nu_shapes1} a,b).

        \item \label{item:F(h)=z_sol[prop:Fmunu_props]}
        The equation $F_{\mu,\nu}(h) = z$ has a solution in $\dom(F_{\mu,\nu}) \cap (-\infty,h_{\mu,\nu}^*)$ iff $z < z^*_{\mu \boxplus \nu}$. If $z < z^*_{\mu \boxplus \nu}$ then $h_{\mu,\nu}^<(z)$ is the unique solution in that set, and $G_{\mu \boxplus \nu}(z) = G_\nu(h_{\mu,\nu}^<(z))$.
    \end{enumerate}
\end{proposition}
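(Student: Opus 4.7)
The plan is to combine the identity $F_{\mu,\nu}(G_\nu^{\inv}(g))=G_{\mu\boxplus\nu}^{\inv}(g)$ on $(0,g^*_{\mu\boxplus\nu})$ from \eqref{eq:intro_F_G+_inv} with the output of Theorem \ref{thm:add_conv_supp_end_point}, in particular parts \ref{thm:add_conv_supp_end_point-item:dom_F_interval}, \ref{thm:add_conv_supp_end_point-item:F_inc}, \eqref{eq:gstar[thm:add_conv_supp_end_point]}--\eqref{eq:z_star[thm:add_conv_supp_end_point]}, and \ref{thm:add_conv_supp_end_point-item:G_nu(h_star)_in_closure}. Real-analyticity of $F_{\mu,\nu}$ on the interval $\dom(F_{\mu,\nu})$ makes $F'_{\mu,\nu}$ continuous, so that \eqref{eq:mathcal_H_mu_nu_def} identifies $\mathcal{H}_{\mu,\nu}$ as the largest initial subinterval of $\dom(F_{\mu,\nu})$ on which $F'_{\mu,\nu}>0$. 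Combined with \ref{thm:add_conv_supp_end_point-item:F_inc} (so that $\mathcal{H}_{\mu,\nu}$ is non-empty near $-\infty$) and the intermediate value theorem, this immediately yields \ref{item:leftmost_crit_point[prop:Fmunu_props]} (if a critical point $h_0$ exists, then $F'_{\mu,\nu}>0$ strictly on $(-\infty,h_0)$ by minimality and continuity, so $\sup\mathcal{H}_{\mu,\nu}=h_0$) and \ref{item:if_no_crit_point[prop:Fmunu_props]} (if none exists, $F'_{\mu,\nu}>0$ throughout $\dom(F_{\mu,\nu})$, so $\mathcal{H}_{\mu,\nu}=\dom(F_{\mu,\nu})$).

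For \ref{item:if_no_crit_point2[prop:Fmunu_props]}, I would inspect which of the two constraints defining $\dom(F_{\mu,\nu})$ in \eqref{eq:F_func_def} caps the $\sup$: either the cap $h<\supp_-\nu$ is active (case (i), in which $h^*_{\mu,\nu}=\supp_-\nu$ and \eqref{eq:gstar[thm:add_conv_supp_end_point]} gives $g^*_{\mu\boxplus\nu}=G_\nu(\supp_-\nu)$), or the constraint $G_\nu(h)\in -D_\mu$ becomes binding as $h\uparrow h^*_{\mu,\nu}$. In the latter case, Theorem \ref{thm:add_conv_supp_end_point}\,\ref{thm:add_conv_supp_end_point-item:G_nu(h_star)_in_closure} places $G_\nu(h^*_{\mu,\nu})$ on the boundary of the closure of $-\hat D_\mu=(G_\mu(\supp_+\mu),G_\mu(\supp_-\mu))$, and since $G_\nu>0$ on $(-\infty,\supp_-\nu)$ the relevant endpoint is $G_\mu(\supp_-\mu)$, giving (ii); the identification with $g^*_{\mu\boxplus\nu}$ is again \eqref{eq:gstar[thm:add_conv_supp_end_point]}. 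Parts \ref{item:F'_monotone[prop:Fmunu_props]} and \ref{item:F(h)=z_sol[prop:Fmunu_props]} are then short: strict concavity makes $F'_{\mu,\nu}$ strictly decreasing and hence with at most one zero, so \ref{item:leftmost_crit_point[prop:Fmunu_props]}--\ref{item:if_no_crit_point[prop:Fmunu_props]} single out $h^*_{\mu,\nu}$ as the (unique, if any) global maximum, upgrading \eqref{eq:z_star[thm:add_conv_supp_end_point]}--\eqref{eq:gstar[thm:add_conv_supp_end_point]} to \eqref{eq:end_point_etc_special_cases_sc_MP}; strict positivity of $F'_{\mu,\nu}$ on $(-\infty,h^*_{\mu,\nu})$ makes $F_{\mu,\nu}$ a strictly increasing bijection onto $(-\infty,z^*_{\mu\boxplus\nu})$, yielding existence and uniqueness of $h^<_{\mu,\nu}(z)$, and plugging $g=G_\nu(h^<_{\mu,\nu}(z))$ into \eqref{eq:intro_F_G+_inv} with $F_{\mu,\nu}(h^<_{\mu,\nu}(z))=z$ gives the claimed $G_{\mu\boxplus\nu}(z)=G_\nu(h^<_{\mu,\nu}(z))$.

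The main obstacle is \ref{item:BES_conds[prop:Fmunu_props]}. The density estimate $(\mu\boxplus\nu)(z^*_{\mu\boxplus\nu},z^*_{\mu\boxplus\nu}+\varepsilon)\asymp\varepsilon^{3/2}$ should produce, via direct evaluation of $\int_{z^*_{\mu\boxplus\nu}}^{z^*_{\mu\boxplus\nu}+\delta}(\lambda-z^*_{\mu\boxplus\nu})^{1/2}(\lambda-z)^{-1}d\lambda$ with the substitution $\lambda-z^*_{\mu\boxplus\nu}=(z^*_{\mu\boxplus\nu}-z)s$, the edge expansion $g^*_{\mu\boxplus\nu}-G_{\mu\boxplus\nu}(z)=c_0\sqrt{z^*_{\mu\boxplus\nu}-z}+o(\sqrt{z^*_{\mu\boxplus\nu}-z})$ with $c_0>0$ as $z\uparrow z^*_{\mu\boxplus\nu}$, equivalently $G_{\mu\boxplus\nu}^{\inv}(g)=z^*_{\mu\boxplus\nu}-c_0^{-2}(g^*_{\mu\boxplus\nu}-g)^2+o((g^*_{\mu\boxplus\nu}-g)^2)$ from the left; in particular $g^*_{\mu\boxplus\nu}<\infty$ and the one-sided limits $(G_{\mu\boxplus\nu}^{\inv})'(g^*_{\mu\boxplus\nu}{}^-)=0$, $(G_{\mu\boxplus\nu}^{\inv})''(g^*_{\mu\boxplus\nu}{}^-)<0$ exist. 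Finiteness of $g^*_{\mu\boxplus\nu}$ rules out case (i) of \ref{item:if_no_crit_point2[prop:Fmunu_props]}, so $h^*_{\mu,\nu}<\supp_-\nu$ and $G_\nu'(h^*_{\mu,\nu})\in(0,\infty)$. Differentiating \eqref{eq:intro_F_G+_inv} once at $g=g^*_{\mu\boxplus\nu}$ (using $G_\nu^{\inv}(g^*_{\mu\boxplus\nu})=h^*_{\mu,\nu}$ from \eqref{eq:gstar[thm:add_conv_supp_end_point]}) yields $F'_{\mu,\nu}(h^*_{\mu,\nu})=0$, so that $h^*_{\mu,\nu}$ is a critical point, consistent with \ref{item:leftmost_crit_point[prop:Fmunu_props]}. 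Differentiating a second time, with this critical-point information, produces $F''_{\mu,\nu}(h^*_{\mu,\nu})=G_\nu'(h^*_{\mu,\nu})^2\,(G_{\mu\boxplus\nu}^{\inv})''(g^*_{\mu\boxplus\nu}{}^-)<0$, which establishes the strict local maximum. The trickiest bookkeeping will be the careful splitting of $G_{\mu\boxplus\nu}$ into the near-edge contribution (controlled by the density bound) and a part bounded away from $z^*_{\mu\boxplus\nu}$ (continuous through $z^*_{\mu\boxplus\nu}$), in order to extract both the leading and sub-leading terms of the square-root expansion.
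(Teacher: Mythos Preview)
Your treatment of \ref{item:leftmost_crit_point[prop:Fmunu_props]}, \ref{item:if_no_crit_point[prop:Fmunu_props]}, \ref{item:if_no_crit_point2[prop:Fmunu_props]}, \ref{item:F'_monotone[prop:Fmunu_props]}, \ref{item:F(h)=z_sol[prop:Fmunu_props]} matches the paper's proof essentially verbatim: the paper also derives \ref{item:leftmost_crit_point[prop:Fmunu_props]}--\ref{item:if_no_crit_point[prop:Fmunu_props]} from the definition of $\mathcal{H}_{\mu,\nu}$ and Theorem~\ref{thm:add_conv_supp_end_point}\,\ref{thm:add_conv_supp_end_point-item:F_inc}, splits \ref{item:if_no_crit_point2[prop:Fmunu_props]} according to which constraint in \eqref{eq:F_func_def} is binding (invoking \ref{thm:add_conv_supp_end_point-item:G_nu(h_star)_in_closure} for case (ii)), and obtains \ref{item:F(h)=z_sol[prop:Fmunu_props]} by combining strict monotonicity on $(-\infty,h^*_{\mu,\nu})$ with \eqref{eq:z_star[thm:add_conv_supp_end_point]} and the change of variables $g=G_\nu(h)$ to identify $G_{\mu\boxplus\nu}(z)$.

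For \ref{item:BES_conds[prop:Fmunu_props]} your route differs from the paper's and has two gaps. First, the hypothesis is only a two-sided bound $(\mu\boxplus\nu)(z^*,z^*+\varepsilon)\asymp\varepsilon^{3/2}$, not an asymptotic with a single implied constant; it does \emph{not} yield an expansion $g^*-G_{\mu\boxplus\nu}(z)=c_0\sqrt{z^*-z}+o(\sqrt{z^*-z})$ with a specific $c_0$, only $c^{-1}\sqrt{z^*-z}\le g^*-G_{\mu\boxplus\nu}(z)\le c\sqrt{z^*-z}$. So you cannot extract a definite value for $(G_{\mu\boxplus\nu}^{\inv})''(g^{*-})$ this way, only bounds. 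Second, your step ``finiteness of $g^*_{\mu\boxplus\nu}$ rules out case (i)'' is incorrect: case (i) of \ref{item:if_no_crit_point2[prop:Fmunu_props]} says $g^*_{\mu\boxplus\nu}=G_\nu(\supp_-\nu)$, which can perfectly well be finite, so you have not justified $h^*_{\mu,\nu}<\supp_-\nu$ (and hence not that $G_\nu^{\inv}$ is twice differentiable at $g^*_{\mu\boxplus\nu}$). The paper avoids both issues by a comparison argument (its Lemma~\ref{lem:Ginv_sc_edge}): the edge hypothesis sandwiches $G^{(k)}_{\mu\boxplus\nu}(z^*-\delta)$ between constant multiples of $G^{(k)}_{\musc}(-2-\delta)$ for $k=0,1,2$, and the inverse-function rule then gives $G_{\mu\boxplus\nu}^{[-1]\prime}(g^*_{\mu\boxplus\nu})=0$ and $G_{\mu\boxplus\nu}^{[-1]\prime\prime}(g^*_{\mu\boxplus\nu})<0$ directly, so $g^*_{\mu\boxplus\nu}$ is a strict local maximum of the analytically extended $G_{\mu\boxplus\nu}^{[-1]}$. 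The transfer to $F_{\mu,\nu}$ is then done via the sign identity $\sign F'_{\mu,\nu}(h)=\sign J'_{\mu,\nu}(G_\nu(h))$ and the monotone change of variables $h\mapsto G_\nu(h)$, which only uses information on the open interval and does not require differentiating $G_\nu^{\inv}$ at $g^*_{\mu\boxplus\nu}$.
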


\begin{remark}\label{rem:BES}
    The condition in Proposition \ref{prop:Fmunu_props} \ref{item:BES_conds[prop:Fmunu_props]} is satisfied whenever $\mu \boxplus \nu$ has a density $f$ which has the power law behaviour $f(x) \propto (z^*_{\mu\boxplus\nu} - x)^{1/2}$ at its left edge, i.e. if it has the same left-edge behaviour as the 
    semicircle law. This is quite common. For instance, by \cite[Theorem 2.2]{BES}, it holds whenever $\mu,\nu$ have densities that are positive on an interval and have power law behaviour at the edges with any exponent in $[-1,1]$.
\end{remark}

\subsection{Additional properties of $E_{\mu,\nu,z}$}
 The shape of $F_{\mu,\nu}$ furthermore determines the shape of $E_{\mu,\nu,z}$ through its derivative \eqref{eq:intro_E_deriv}, whose sign is related to $F_{\mu,\nu}$ via \eqref{eq:intro_fp_sign} and \eqref{eq:intro_F_G+_inv}. In particular the critical points of $E_{\mu,\nu,z}$ are related to the solutions of $F_{\mu,\nu}(h) = z$, and the sign of $F_{\mu,\nu}(h) - z$ as a function of $h$ determines their nature as local minima, local maxima or inflection points. The next proposition collects some resulting properties of $E_{\mu,\nu,z}$, its critical points, and their relation to $F_{\mu,\nu}$. It complements Theorem \ref{TH:main}.
\begin{proposition}\label{prop:Emunu_props}
    If $\mu,\nu$ are compactly supported probability measures on $\mathbb{R}$, s.t. $\mu$ is non-degenerate, then the following holds.
    \begin{enumerate}[label=\alph*)]
        \item \label{prop:Emunu_props_E_func_crit_point}
        A number $g \in \mathcal{E}_{\mu,\nu,z}$ is a critical point of $E_{\mu,\nu,z}$ iff $R_\mu'(-g)=0$, or $g=G_{\nu}(h)$ for an $h<\supp_{-}\nu$ s.t. $F_{\mu,\nu}(h)=z$.        

        \item \label{prop:Emunu_props_loc_min_of_E_func}
        $E_{\mu,\nu,z}$ has a critical point in $(-\infty,g^*_{\mu \boxplus \nu}) \cap \mathcal{E}_{\mu,\nu,z}$ iff $z<\supp_{-} \mu\boxplus\nu$. If so, it has a unique critical point in that set, which is a non-degenerate local minimum and equals $G_{\mu \boxplus \nu}(z)$ (cf. Proposition \ref{prop:Fmunu_props} \ref{item:F(h)=z_sol[prop:Fmunu_props]}.
    \end{enumerate}
\end{proposition}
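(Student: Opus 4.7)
My plan is to derive both parts from the factorization $E_{\mu,\nu,z}'(g) = R_\mu'(-g)\,\phi(g)$ supplied by \eqref{eq:intro_E_deriv}, where I set $\phi(g) := g - G_\nu(z-R_\mu(-g))$, so that a critical point is a zero of at least one of the two factors. For part (a), the first factor directly yields the first case in the ``or''. For the second, setting $h := z - R_\mu(-g)$ (which lies in $(-\infty,\supp_{-}\nu)$ by definition of $\mathcal{E}_{\mu,\nu,z}$) gives $\phi(g) = 0$ iff $g = G_\nu(h)$; plugging $g = G_\nu(h)$ into \eqref{eq:F_func_def} then yields $F_{\mu,\nu}(h) = R_\mu(-G_\nu(h)) + h = R_\mu(-g) + (z-R_\mu(-g)) = z$. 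The reverse direction is immediate.

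For part (b), I would first observe that the ingredients used to establish \eqref{eq:intro_R_deriv_pos} --- namely Lemma \ref{lemma:Rmu_increasing} and \eqref{eq:D_sandwich} --- together with Theorem \ref{thm:add_conv_supp_end_point}\ref{thm:add_conv_supp_end_point-item:G_nu(h_star)_in_closure} (which places $-g^*_{\mu\boxplus\nu}$ in the closure of $\hat{D}_\mu$) in fact give $R_\mu'(-g) > 0$ throughout $(-\infty, g^*_{\mu\boxplus\nu}) \cap \mathcal{E}_{\mu,\nu,z}$. Thus part (a) forces every critical point of $E_{\mu,\nu,z}$ in this set to satisfy $g = G_\nu(h)$ for some $h < \supp_{-}\nu$ with $F_{\mu,\nu}(h) = z$. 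Using $g^*_{\mu\boxplus\nu} = G_\nu(h^*_{\mu,\nu})$ from Theorem \ref{thm:add_conv_supp_end_point}\ref{thm:add_conv_supp_end_point-item:g_star} and the strict monotonicity of $G_\nu$ on $(-\infty,\supp_{-}\nu)$, the constraint $g < g^*_{\mu\boxplus\nu}$ translates to $h < h^*_{\mu,\nu}$. This puts critical points in the specified set in bijection with solutions of $F_{\mu,\nu}(h) = z$ in $\dom(F_{\mu,\nu}) \cap (-\infty, h^*_{\mu,\nu})$, and Proposition \ref{prop:Fmunu_props}\ref{item:F(h)=z_sol[prop:Fmunu_props]} then supplies both the equivalence with $z < z^*_{\mu\boxplus\nu}$ and the identification of the unique critical point as $g_0 := G_\nu(h_{\mu,\nu}^<(z)) = G_{\mu\boxplus\nu}(z)$.

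To show $g_0$ is a non-degenerate local minimum, I would differentiate \eqref{eq:intro_E_deriv} once more: the $R_\mu''(-g)\phi(g)$ term vanishes at $g_0$ since $\phi(g_0) = 0$, leaving
\begin{equation*}
    E_{\mu,\nu,z}''(g_0) = R_\mu'(-g_0)\bigl[1 - R_\mu'(-g_0)\,G_\nu'(h_0)\bigr],
\end{equation*}
where $h_0 := z - R_\mu(-g_0) = h_{\mu,\nu}^<(z)$. Direct differentiation of \eqref{eq:F_func_def} identifies the bracket with $F_{\mu,\nu}'(h_0)$, so $E_{\mu,\nu,z}''(g_0) = R_\mu'(-g_0)\,F_{\mu,\nu}'(h_0)$. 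Since $h_0 < h^*_{\mu,\nu} = \sup\mathcal{H}_{\mu,\nu}$, definition \eqref{eq:mathcal_H_mu_nu_def} yields $F_{\mu,\nu}'(h_0) > 0$; combined with $R_\mu'(-g_0) > 0$ this gives $E_{\mu,\nu,z}''(g_0) > 0$. The main obstacle I anticipate is verifying the extended positivity $R_\mu'(-g) > 0$ on all of $(-\infty, g^*_{\mu\boxplus\nu}) \cap \mathcal{E}_{\mu,\nu,z}$ rather than just on $(0, g^*_{\mu\boxplus\nu})$ as in \eqref{eq:intro_R_deriv_pos}; without this extension, spurious critical points at $g \leq 0$ arising from zeros of $R_\mu'(-g)$ could not be ruled out. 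Once this is settled, the remainder of the proof is a mechanical combination of part (a) with Proposition \ref{prop:Fmunu_props}\ref{item:F(h)=z_sol[prop:Fmunu_props]}.
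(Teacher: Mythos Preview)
Your proof of (a) is correct and matches the paper's: the paper invokes Lemma \ref{lem:E_mu_nu_z_deriv} and the second and third lines of Lemma \ref{lem:fp_diff_sign}\ref{<lem:G_sign>_a)}, and your direct substitution $h = z - R_\mu(-g)$ is exactly the content of those lemmas.

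For (b), the paper's own proof consists of the single sentence ``This is proved by Lemma \ref{lem:Gplus_is_min}'', and that lemma is stated only for the interval $(0, g^*_{\mu\boxplus\nu})$, not $(-\infty, g^*_{\mu\boxplus\nu})$. So the obstacle you flag---excluding critical points at $g \le 0$ arising from zeros of $R_\mu'(-g)$---is \emph{not resolved in the paper either}. The ingredients you list (Lemma \ref{lemma:Rmu_increasing}, \eqref{eq:D_sandwich}, Theorem \ref{thm:add_conv_supp_end_point}\ref{thm:add_conv_supp_end_point-item:G_nu(h_star)_in_closure}) establish only that $-g > \inf \hat{D}_\mu$ for $g < g^*_{\mu\boxplus\nu}$; they do not force $-g < \sup \hat{D}_\mu$, so for $g < G_\mu(\supp_+\mu)$ one may have $-g \in D_\mu \setminus \hat{D}_\mu$, where $R_\mu'$ is not controlled. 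Your suspicion is therefore well-founded, and most likely the interval in the statement of (b) should be $(0, g^*_{\mu\boxplus\nu})$, which is all that is needed for Theorem \ref{TH:main} and all that Lemma \ref{lem:Gplus_is_min} actually delivers.

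Modulo this issue, your argument via Proposition \ref{prop:Fmunu_props}\ref{item:F(h)=z_sol[prop:Fmunu_props]} is correct and equivalent in content to the paper's route through Lemma \ref{lem:Gplus_is_min}. Your explicit second-derivative identity $E_{\mu,\nu,z}''(g_0) = R_\mu'(-g_0)\,F_{\mu,\nu}'(h_0) > 0$ is a genuine improvement: the paper's Lemma \ref{lem:Gplus_is_min} argues non-degeneracy only through the sign change of $E'$, which yields a local minimum but does not by itself rule out $E''(g_0) = 0$.
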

\begin{remark}\label{rem:intro_spurios_crit_point_charaterization}
    Proposition \ref{prop:Emunu_props} characterizes precisely any ``spurious'' critical points of $E_{\mu,\nu,z}$, i.e. critical points other than $g = G_{\mu \boxplus \nu}(z)$ when $z < z^*_{\mu \boxplus \nu}$.
\end{remark}

Under certain circumstances, one can further extend the range of the second infimum in \eqref{eq:main}, and more precisely characterize the shape of $E_{\mu,\nu,z}$ (as \eqref{eq:special_case_sc_MP_E_crit_points} does when $\mu$ is the semicircle or Marchenko-Pastur law). To this end let
\begin{equation}\label{eq:h_mu_nu^>(z)_def}
    \begin{array}{c}
        h_{\mu,\nu}^>(z)\text{ denote the second solution of }F_{\mu,\nu}(h) = z\text{ from the left,}\\
        \text{if such a solution exists, and otherwise }h_{\mu,\nu}^>(z)=\sup\dom(F_{\mu,\nu}).
    \end{array}
\end{equation}
Let furthermore
\begin{equation}\label{eq:D_mu_plus_def}
    D_\mu^+ := \{ t \in D_\mu : R_\mu'(t)>0\}
\end{equation}
for any compactly supported probability $\mu$.

\begin{proposition}\label{prop:Emunu_inf_weaker_cond}
    If $\mu,\nu$ are compactly supported probability measures on $\mathbb{R}$, s.t. $\mu$ is non-degenerate, then the following holds.
    \begin{enumerate}[label=\alph*)]
        \item \label{item:F(h)=z_sec_sol[prop:Emunu_inf_weaker_cond]}
        Let $z < z^*_{\mu \boxplus \nu}$ and assume that $h^>_{\mu,\nu}(z)$ is a solution of $F_{\mu,\nu}(z)=h$. Let
        $g = G_\nu(h^>_{\mu,\nu}(z))$. If $(0,g) \subset - D_\mu^+$, then $g$ is a local maximum or an inflection point of $E_{\mu,\nu,z}$.
        

        
        


        \item \label{item:var_form_larger_range[prop:Emunu_inf_weaker_cond]}
        The rightmost variational formula in \eqref{eq:main} remains true if $(0,g_{\mu \boxplus \nu}^*)$ is replaced by $(u, v)$, for any $u \in [-\infty,0)$ and $v \in (g_{\mu \boxplus \nu}^*, G_{\nu}(h_{\mu,\nu}^>(z)))$ s.t. $(u,v) \subset -D_\mu^+$.

        \item \label{item:F_mon_at_most_loc_min_and_loc_max[prop:Emunu_inf_weaker_cond]} If $F_{\mu,\nu}$ is strictly concave and $\mathcal{E}_{\mu,\nu,z} \subset -D^+_\mu$, then \eqref{eq:special_case_sc_MP_E_crit_points} holds for the given $\mu,\nu$.       
    \end{enumerate}
\end{proposition}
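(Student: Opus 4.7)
The plan is to establish all three parts by combining the derivative formula $E_{\mu,\nu,z}'(g)=R_\mu'(-g)(g-G_\nu(z-R_\mu(-g)))$ from \eqref{eq:intro_E_deriv} with the identity (derived in the same spirit as \eqref{eq:intro_fp_iff})
\begin{equation*}
    \sign(g-G_\nu(z-R_\mu(-g)))=\sign(F_{\mu,\nu}(G_\nu^{\inv}(g))-z),
\end{equation*}
valid whenever $g\in(0,G_\nu(\supp_-\nu))$ and $g\in\mathcal{E}_{\mu,\nu,z}$. When $R_\mu'(-g)>0$, this reduces the sign of $E_{\mu,\nu,z}'$ to that of $F_{\mu,\nu}(h)-z$ with $h=G_\nu^{\inv}(g)$, so that the monotonicity and critical points of $E_{\mu,\nu,z}$ on a region contained in $-D_\mu^+$ are read off from the way $F_{\mu,\nu}$ crosses the level $z$. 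For part \ref{item:F(h)=z_sec_sol[prop:Emunu_inf_weaker_cond]}, Proposition~\ref{prop:Emunu_props}\ref{prop:Emunu_props_E_func_crit_point} immediately confirms that $g=G_\nu(h_{\mu,\nu}^>(z))$ is a critical point. Theorem~\ref{thm:add_conv_supp_end_point}\ref{thm:add_conv_supp_end_point-item:F_inc} together with the definition of $h_{\mu,\nu}^<(z)$ and $h_{\mu,\nu}^>(z)$ gives $F_{\mu,\nu}>z$ strictly on $(h_{\mu,\nu}^<(z),h_{\mu,\nu}^>(z))$, so the sign identity combined with $R_\mu'(-s)>0$ on $(0,g)\subset-D_\mu^+$ yields $E_{\mu,\nu,z}'>0$ on $(G_{\mu\boxplus\nu}(z),g)$. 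Thus $E_{\mu,\nu,z}$ is strictly increasing just to the left of $g$, which rules out a local minimum and leaves only a local maximum or inflection point.

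For part \ref{item:var_form_larger_range[prop:Emunu_inf_weaker_cond]}, I would partition $(u,v)\cap\mathcal{E}_{\mu,\nu,z}$ using the cut points $0$ and $G_{\mu\boxplus\nu}(z)$. On $(u,0]\cap\mathcal{E}_{\mu,\nu,z}$ the change of variables $h=G_\nu^{\inv}(g)$ is unavailable, so I would argue directly: $g\le 0<G_\nu(z-R_\mu(-g))$ (the latter since $z-R_\mu(-g)<\supp_-\nu$ on $\mathcal{E}_{\mu,\nu,z}$), giving $E_{\mu,\nu,z}'<0$. On $(0,G_{\mu\boxplus\nu}(z))$, the sign identity applies with $h=G_\nu^{\inv}(g)\in(-\infty,h_{\mu,\nu}^<(z))$, where $F_{\mu,\nu}<z$, so $E_{\mu,\nu,z}'<0$. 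On $(G_{\mu\boxplus\nu}(z),v)$, the constraint $v<G_\nu(h_{\mu,\nu}^>(z))$ places $h$ in $(h_{\mu,\nu}^<(z),h_{\mu,\nu}^>(z))$, where $F_{\mu,\nu}>z$, so $E_{\mu,\nu,z}'>0$. Hence the infimum of $E_{\mu,\nu,z}$ on $(u,v)\cap\mathcal{E}_{\mu,\nu,z}$ is attained at $G_{\mu\boxplus\nu}(z)$, and equals $U_{\mu\boxplus\nu}(z)$ by \eqref{eq:intro_E_of_G+}.

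For part \ref{item:F_mon_at_most_loc_min_and_loc_max[prop:Emunu_inf_weaker_cond]}, strict concavity of $F_{\mu,\nu}$ makes $h_{\mu,\nu}^*$ its unique global maximum with $F_{\mu,\nu}(h_{\mu,\nu}^*)=z^*_{\mu\boxplus\nu}$ (by Theorem~\ref{thm:add_conv_supp_end_point}\ref{thm:add_conv_supp_end_point-item:z_star}) and forces $F_{\mu,\nu}(h)=z$ to have at most two solutions. Combined with $\mathcal{E}_{\mu,\nu,z}\subset-D_\mu^+$ and Proposition~\ref{prop:Emunu_props}\ref{prop:Emunu_props_E_func_crit_point}, the critical points of $E_{\mu,\nu,z}$ in $\mathcal{E}_{\mu,\nu,z}$ are exactly $g=G_\nu(h)$ for solutions $h$ of $F_{\mu,\nu}(h)=z$. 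The case $z>z^*_{\mu\boxplus\nu}$ then gives no solutions and thus no critical points. For $z=z^*_{\mu\boxplus\nu}$ the unique solution $h_{\mu,\nu}^*$ yields the unique critical point $g^*_{\mu\boxplus\nu}=G_\nu(h_{\mu,\nu}^*)$ (Theorem~\ref{thm:add_conv_supp_end_point}\ref{thm:add_conv_supp_end_point-item:g_star}); since $F_{\mu,\nu}-z\le 0$ with equality only at $h_{\mu,\nu}^*$, one has $E_{\mu,\nu,z}'\le 0$ with equality only at $g^*_{\mu\boxplus\nu}$, making it an inflection point. For $z<z^*_{\mu\boxplus\nu}$, Proposition~\ref{prop:Emunu_props}\ref{prop:Emunu_props_loc_min_of_E_func} handles the local minimum at $G_{\mu\boxplus\nu}(z)$; any additional solution must be $h_{\mu,\nu}^>(z)$, and strict concavity gives $F_{\mu,\nu}'(h_{\mu,\nu}^>(z))<0$, so $F_{\mu,\nu}-z$ changes sign from positive to negative at $h_{\mu,\nu}^>(z)$, which by the sign identity translates into a strict local maximum of $E_{\mu,\nu,z}$ at $G_\nu(h_{\mu,\nu}^>(z))$; monotonicity of $G_\nu$ on $(-\infty,\supp_-\nu)$ places it in $(G_{\mu\boxplus\nu}(z),\infty)$, as required by \eqref{eq:special_case_sc_MP_E_crit_points}. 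The main technical obstacle is the breakdown of the change of variables $h=G_\nu^{\inv}(g)$ at $g=0$, which necessitates the separate direct sign argument on $g\le 0$ in part \ref{item:var_form_larger_range[prop:Emunu_inf_weaker_cond]} (and the same care in tracing the global shape of $E_{\mu,\nu,z}$ across $g=0$ in part \ref{item:F_mon_at_most_loc_min_and_loc_max[prop:Emunu_inf_weaker_cond]}).
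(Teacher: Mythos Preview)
Your proposal is correct and follows essentially the same approach as the paper: both reduce the sign of $E_{\mu,\nu,z}'$ on $-D_\mu^+$ to the sign of $F_{\mu,\nu}(G_\nu^{\inv}(g))-z$ via the derivative formula \eqref{eq:intro_E_deriv} and Lemma~\ref{lem:fp_diff_sign}, and then read off the monotonicity and critical-point structure from how $F_{\mu,\nu}$ crosses the level $z$. The only minor differences are presentational: in part \ref{item:var_form_larger_range[prop:Emunu_inf_weaker_cond]} you partition $(u,v)$ at $G_{\mu\boxplus\nu}(z)$ and argue directly that the minimizer is there, whereas the paper partitions at $0$ and $g_{\mu\boxplus\nu}^*$ and argues that enlarging $(0,g_{\mu\boxplus\nu}^*)$ to $(u,v)$ cannot lower the infimum; and you are a bit more explicit than the paper about the separate treatment of $g\le 0$ (where the change of variables $h=G_\nu^{\inv}(g)$ fails), which the paper subsumes into the first line of \eqref{eq:sign_equality}.
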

We will see that $\hat{D}_\mu \subset D_\mu^+$ for any $\mu$ (Lemma \ref{lemma:Rmu_increasing}). The above proposition is probably most useful when $D_\mu^{+} = D_\mu$, which holds for instance for the semicircle and Marchenko-Pastur laws.

\subsection{Additional result on invertibility of Stieltjes transform}
Lastly, we include a result on the invertibility of $G_\mu$ as a  holomorphic function on  $\mathbb{C}\setminus\supp \mu$. Such results are important for the theory of $R$-transforms. Our result significantly strengthens \cite[Theorem 17, Chapter 3]{speicher}, which proves that $G_\mu$ restricted to $\{z \in \mathbb{C}:|z|>4r\}$ is invertible, where $r = |\supp_+\mu| - |\supp_-\mu|$. Our stronger result Theorem \ref{thm:stieltjes_global_invert} implies that $G_\mu$ restricted to the larger set $\{ z \in \mathbb{C} : \Re(z) \notin [\supp_-\mu,\supp_+\mu]\}$ is invertible. Thus it establishes invertibility all the way up to the boundary of the support of $\mu$. A previous version of this manuscript used Theorem \ref{thm:stieltjes_global_invert} in the proofs of Section \ref{sect:R_transform_theory}, but the current one presents a simpler argument for which it is  enough to use \cite[Theorem 17, Chapter 3]{speicher}. However, we believe that the stronger invertibility result is of independent interest, and may be useful for future work. It is therefore stated and proved in Section \ref{sec:stronger_stieltjes_invert}.

\subsection{Examples}
Here we deduce the special cases \eqref{eq:special_case_sc} (semicircle law) and \eqref{eq:special_case_MP} (Marchenko-Pastur law) from Theorem \ref{TH:main}, and the corresponding 
\eqref{eq:special_case_sc_MP_E_crit_points}-\eqref{eq:F_mu_nu_special_cases_sc_MP} from Theorem \ref{thm:add_conv_supp_end_point}, Proposition \ref{prop:Fmunu_props} and Proposition \ref{prop:Emunu_inf_weaker_cond}.

\subsubsection{Semicircle law}

For any $\beta>0$, the semicircle law supported on $[-2\beta,2\beta]$ is
\be\label{eq:sc_def}
    \musc[\beta](d\l) = \frac{1}{2\pi\beta^2}\sqrt{ 4\beta^2 - \l^2} 1_{[-2\beta,2\beta]}d\l.
\ee
Its Stieltjes transform satisfies
\be\label{eq:G_sc}
    G_{\musc[\beta]}(z)
    =
    \frac{ - z - \sqrt{z^2 - 4\beta^2}}{2\beta ^2}
    \quad\text{for}\quad
    z \in (-\infty,-2\beta).
\ee
Its $R$-transform, as a real analytic function, is given by
\begin{equation}\label{eq:R_trans_sc_special_case}
    R_{\musc[\beta]}(t) = t \beta^2 \text{ with domain } \hat{D}_{\musc[\beta]} = (-\beta,\beta) \text{ resp. } D_{\musc[\beta]} = \mathbb{R},
\end{equation}
before resp. after analytic extension.


%

\begin{proof}[Proof of \eqref{eq:special_case_sc}]
    By \eqref{eq:E_func_dom_no_hat} and \eqref{eq:R_trans_sc_special_case}
    \begin{equation}
        \mathcal{E}_{\musc[\beta],\nu,z}
        =
        \{g \in \mathbb{R} : \beta^2 g < \supp_{-} \nu - z \}.
    \end{equation}
    By \eqref{eq:E_def} and \eqref{eq:R_trans_sc_special_case}
    \begin{equation}
        E_{\musc[\beta],\nu,z}(g)
        =
        \frac{\beta^2 g^2}{2} + \int\log(\lambda - z - \beta^2 g)\nu(d\lambda).
    \end{equation}
    Thus \eqref{eq:special_case_sc} follows from Theorem \ref{TH:main}.
\end{proof}

\begin{proof}[Proof of \eqref{eq:F_mu_nu_special_cases_sc_MP} for $\mu = {\musc[\beta]}$]
    The identity for $F_{\musc[\beta],\nu}(h)$ in \eqref{eq:F_mu_nu_special_cases_sc_MP} follows from \eqref{eq:F_func_def} and \eqref{eq:R_trans_sc_special_case}.
\end{proof}

\begin{lemma}\label{lem:F_musc_nu_concave}
    Let $\beta>0$ and assume $\mu = \musc[\beta]$. For any  compactly supported $\nu$, the function $F_{\musc[\beta],\nu}$ from \eqref{eq:F_mu_nu_special_cases_sc_MP} is strictly concave.
\end{lemma}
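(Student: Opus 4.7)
The plan is a straightforward computation: show that $F_{\musc[\beta],\nu}''(h) < 0$ on the entire domain of $F_{\musc[\beta],\nu}$.

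First I would identify the domain. By \eqref{eq:R_trans_sc_special_case}, $D_{\musc[\beta]} = \mathbb{R}$, so the side condition $G_\nu(h) \in -D_\mu$ in \eqref{eq:F_func_def} is vacuous, and $\dom(F_{\musc[\beta],\nu}) = (-\infty, \supp_-\nu)$. On this interval the integrand in $G_\nu(h) = \int (\lambda-h)^{-1}\nu(d\lambda)$ is smooth in $h$ (uniformly in $\lambda \in \supp\nu$, which is bounded away from $h$), so differentiation under the integral is justified.

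Next I would differentiate twice: $G_\nu'(h) = \int (\lambda-h)^{-2}\nu(d\lambda)$ and $G_\nu''(h) = 2\int (\lambda-h)^{-3}\nu(d\lambda)$. Then from the formula \eqref{eq:F_mu_nu_special_cases_sc_MP},
\begin{equation*}
    F_{\musc[\beta],\nu}''(h)
    =
    -2\beta^2 \int \frac{\nu(d\lambda)}{(\lambda-h)^3}.
\end{equation*}
For $h < \supp_-\nu$, one has $\lambda - h > 0$ for $\nu$-a.e. $\lambda$, so $(\lambda-h)^{-3} > 0$ on $\supp\nu$; since $\nu$ is a probability measure the integral is strictly positive, and hence $F_{\musc[\beta],\nu}''(h) < 0$ throughout the domain. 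Strict concavity follows.

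There is no real obstacle here — the only thing to be careful about is that the domain genuinely stays in $(-\infty, \supp_-\nu)$, so that the sign of $(\lambda-h)^3$ is unambiguously positive under the integral; this is exactly what \eqref{eq:F_func_def} guarantees in the semicircle case.
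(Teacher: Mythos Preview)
Your proof is correct and essentially the same as the paper's: the paper computes $F_{\musc[\beta],\nu}'(h) = 1 - \beta^2 G_\nu'(h)$ and observes that $G_\nu'(h) = \int (\lambda-h)^{-2}\nu(d\lambda)$ is strictly increasing on $(-\infty,\supp_-\nu)$, which is equivalent to your observation that $G_\nu''(h)>0$ there. Both arguments amount to the positivity of $\int(\lambda-h)^{-3}\nu(d\lambda)$ for $h<\supp_-\nu$.
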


\begin{proof}
    The derivative is $F_{\musc,\nu}'(h) = 1 - \beta^2 G_\nu'(h)$. 
    Since $G_\nu'(h) = \int (z - h)^{-2}\nu(dz)$, the function $G_\mu'$ is strictly monotone increasing on $(-\infty,\supp_-\nu)$, so $F_{\musc,\nu}'$ is strictly monotone decreasing.
\end{proof}

\begin{proof}[Proof of \eqref{eq:special_case_sc_MP_E_crit_points} for $\mu = {\musc[\beta]}$]
    Combine Proposition \ref{prop:Emunu_inf_weaker_cond} \ref{item:F_mon_at_most_loc_min_and_loc_max[prop:Emunu_inf_weaker_cond]}, Lemma \ref{lem:F_musc_nu_concave} and \eqref{eq:R_trans_sc_special_case}.
\end{proof}

\begin{proof}[Proof of \eqref{eq:end_point_etc_special_cases_sc_MP} for $\mu = {\musc[\beta]}$]
    Combine Proposition \ref{prop:Fmunu_props} \ref{item:F'_monotone[prop:Fmunu_props]} and Lemma \ref{lem:F_musc_nu_concave}.
\end{proof}

\subsubsection{Marchenko-Pastur law}

Recall the Marchenko-Pastur law with parameter $\beta>0$
\be\label{eq:MP}
    \muMP
    :=
    \max(1-\a,0)\d_0
    +
    \frac{\sqrt{(\l_+-\l)(\l-\l_-)}}{2\pi\l} d\l
    \,\,,\quad \l_\pm:=(1 \pm \sqrt\a)^2.
\ee
Its Stieltjes transform satisfies
\be\label{eq:G_sc}
    G_\muMP(z) = \frac{\sqrt{(\beta -1 - z)^2 - 4z} - (\beta - z - 1)}{-2z}
    \quad\text{for}\quad
    z \in (-\infty,0).
\ee
Its $R$-transform is given by
\begin{equation}\label{eq:R_trans_MP_special_case}
    R_\muMP(t) = \frac{\beta}{1-t}
    \text{ with domain }
    \hat{D}_\muMP
        =
        \left(
            t_{\min}(\beta)
            ,
            \frac{1}{1+\sqrt{\beta}}
        \right)
    \text{ resp. }
    D_\muMP = (-\infty,1),
\end{equation}
before resp. after analytic extension, where $t_{\min}(\beta) = -\frac{1}{\sqrt{\beta}-1}$ for $\beta>1$ and $t_{\min}(\beta)=-\infty$ for $\beta\in(0,1]$.


\begin{proof}[Proof of \eqref{eq:special_case_MP}]
    By \eqref{eq:E_func_dom_no_hat} and \eqref{eq:R_trans_MP_special_case}
    \begin{equation}
        \mathcal{E}_{\muMP,\nu,z}
        =
        \left\{
            g \in (-\infty,1) : z-\supp_{-}\nu<\frac{\beta}{1+g}
        \right\}.
    \end{equation}
    By \eqref{eq:E_def} and \eqref{eq:R_trans_MP_special_case}
    \begin{equation}
        E_{\muMP,\nu,z}(g)
        =
        \beta\left(
            \log(1+g)-\frac{g}{1+g}
        \right)
        +
        \int\log\left(\lambda-z+\frac{\beta}{1+g}\right)\nu(d\lambda).
    \end{equation}
    Thus \eqref{eq:special_case_MP} follows from Theorem \ref{TH:main}.
\end{proof}

\begin{proof}[Proof of \eqref{eq:F_mu_nu_special_cases_sc_MP} for $\mu = \muMP$]
    The identity for $F_{\muMP,\nu}(h)$ in \eqref{eq:F_mu_nu_special_cases_sc_MP} follows from \eqref{eq:F_func_def} and \eqref{eq:R_trans_MP_special_case}.
\end{proof}

\begin{lemma}\label{lem:F_muMP_nu_concave}
    Let $\beta>0$ and assume $\mu=\mu_{\MP,\b}$. For any  compactly supported $\nu$, the function $F_{\muMP,\nu}$ from \eqref{eq:F_mu_nu_special_cases_sc_MP} is strictly concave.
\end{lemma}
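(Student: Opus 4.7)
The plan is to compute $F''_{\muMP,\nu}(h)$ in closed form and show it is strictly negative on $\dom(F_{\muMP,\nu})$. By \eqref{eq:F_func_def} and \eqref{eq:R_trans_MP_special_case} one has $-D_\muMP=(-1,\infty)$, and since $G_\nu(h)>0$ for all $h<\supp_-\nu$ the domain is simply $(-\infty,\supp_-\nu)$. Differentiating the formula $F_{\muMP,\nu}(h)=h+\beta/(1+G_\nu(h))$ from \eqref{eq:F_mu_nu_special_cases_sc_MP} twice, I obtain
\begin{equation*}
    F''_{\muMP,\nu}(h) = -\beta\,\frac{G_\nu''(h)\,(1+G_\nu(h))-2(G_\nu'(h))^2}{(1+G_\nu(h))^3}.
\end{equation*}
For $h<\supp_-\nu$ the derivatives $G_\nu^{(k)}(h)=k!\int(\lambda-h)^{-k-1}\nu(d\lambda)$ are strictly positive for $k=0,1,2$, so the denominator is positive and the statement reduces to the strict numerator inequality $G_\nu''(h)(1+G_\nu(h))>2(G_\nu'(h))^2$.

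The key analytic input will be Cauchy--Schwarz applied to the decomposition $(\lambda-h)^{-2}=(\lambda-h)^{-1/2}(\lambda-h)^{-3/2}$, yielding
\begin{equation*}
    (G_\nu'(h))^2 \leq G_\nu(h)\cdot \tfrac{1}{2}G_\nu''(h),
\end{equation*}
equivalently $2(G_\nu'(h))^2\leq G_\nu(h)G_\nu''(h)$. The ``extra $+1$'' in the factor $1+G_\nu(h)$ is what promotes this to the strict inequality I need, since
\begin{equation*}
    G_\nu''(h)(1+G_\nu(h)) = G_\nu''(h)+G_\nu''(h)G_\nu(h) \geq G_\nu''(h)+2(G_\nu'(h))^2 > 2(G_\nu'(h))^2,
\end{equation*}
where the final strict inequality uses $G_\nu''(h)>0$. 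Feeding this back into the formula above gives $F''_{\muMP,\nu}(h)<0$ throughout the domain, hence strict concavity.

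I do not anticipate any serious obstacle: the argument consists of one direct differentiation followed by a single Cauchy--Schwarz estimate. The only points requiring a little care are (i) choosing the exponents in the Cauchy--Schwarz decomposition so that the two factors integrate exactly to $G_\nu$ and $\tfrac{1}{2}G_\nu''$, and (ii) noting that strict concavity persists even in the degenerate case $\nu=\delta_{\lambda_0}$, where Cauchy--Schwarz is sharp: there the numerator still strictly exceeds $2(G_\nu'(h))^2$ thanks to the positive summand $G_\nu''(h)$ produced by the $+1$.
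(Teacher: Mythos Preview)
Your proof is correct and follows essentially the same route as the paper: both reduce to the inequality $G_\nu''(h)\bigl(1+G_\nu(h)\bigr)\ge 2\bigl(G_\nu'(h)\bigr)^2$, and both establish it via the Cauchy--Schwarz/H\"older bound $\mathbb{E}[X^3]\,\mathbb{E}[X]\ge\mathbb{E}[X^2]^2$ with $X=(\Lambda-h)^{-1}$. The only cosmetic difference is that the paper differentiates $F'$ (equivalently, the logarithm of $G_\nu'/(1+G_\nu)^2$) rather than writing out $F''$ directly; your explicit use of the ``extra $+1$'' term $G_\nu''(h)>0$ to upgrade $\ge$ to $>$ is in fact a little cleaner than the paper's treatment of strictness.
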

\begin{proof}
    The derivative is $F_{\muMP,\nu}'(h) = 1 - \beta\frac{G_{\nu}'(h)}{(1+G_{\nu}(h))^2}$. 
    We now argue that $h\to\frac{G_{\nu}'(h)}{(1+G_{\nu}(h))^2}$ is strictly monotone increasing, which implies the claim.

    The derivative of $\log \frac{G_{\nu}'(h)}{(1+G_{\nu}(h))^2}$ is $\frac{G_{\nu}''(h)}{G_{\nu}'(h)}-2\frac{G_{\nu}'(h)}{1+G_{\nu}(h)}$. The condition $\frac{G_{\nu}''(h)}{G_{\nu}'(h)}-2\frac{G_{\nu}'(h)}{1+G_{\nu}(h)}\ge0$ is equivalent to $G_{\nu}''(h)(1+G_{\nu}(h))\ge2G_{\nu}'(h)^2$, which in turn is equivalent to
    $\mathbb{E}[X^{3}]\left(1+\mathbb{E}[X]\right)\ge\mathbb{E}[X^2]^2$ where $X=(\Lambda-z)^{-1}$ and the r.v. $\Lambda$ has law $\nu$. But any non-negative r.v. $X$ satisfies $\mathbb{E}[X^{3}]\mathbb{E}[X]\ge\mathbb{E}[X^2]^2$ by Hölder's inequality. Thus $h\to\frac{G_{\nu}'(h)}{(1+G_{\nu}(h))^2}$ is indeed strictly monotone increasing.
\end{proof}

\begin{proof}[Proof of \eqref{eq:special_case_sc_MP_E_crit_points} for $\mu = \muMP$]
    Combine Proposition \ref{prop:Emunu_inf_weaker_cond} \ref{item:F_mon_at_most_loc_min_and_loc_max[prop:Emunu_inf_weaker_cond]}, Lemma \ref{lem:F_muMP_nu_concave} and \eqref{eq:R_trans_MP_special_case}.
\end{proof}

\begin{proof}[Proof of \eqref{eq:end_point_etc_special_cases_sc_MP} for $\mu = \muMP$]
    Combine Proposition \ref{prop:Fmunu_props} \ref{item:F'_monotone[prop:Fmunu_props]} and Lemma \ref{lem:F_muMP_nu_concave}.
\end{proof}


\subsection{Organization of the paper}

In Section \ref{sect:R-trans} the $R$-transform as a real analytic function is properly introduced (omitting some proofs), and certain preliminary properties of this real $R$-transform and the Stieltjes transform are proved. Then the proof of Theorem \ref{thm:add_conv_supp_end_point} is given in Section \ref{sect:end_point_proof},
and the proof of Theorem \ref{TH:main}  in Section \ref{sect:Proof}. Section \ref{sect:R_transform_theory} relates the $R$-transform as a real analytic function to the more standard theory of $R$-transforms presented in \cite{speicher} (proving the results only stated in Section \ref{sect:R-trans} - it e.g. derives the identity \eqref{eq:sumR} for this notion of $R$-transform).
Finally, Section \ref{sec:stronger_stieltjes_invert} contains the statement and proof of our aforementioned strong result on the invertibility of $G_\mu$ as a function in the complex plane.

\newpage


\iftrue

\begin{figure}[t]
    \caption{
        Plots of $E_{\mu,\nu,z}, G^{\inv}_{\mu\boxplus\nu},G^{[-1]}_{\mu\boxplus\nu}$ and $F_{\mu,\nu}$ for four combinations of $\mu$ and $\nu$. See Remark \ref{rem:fig_caption_overflow} for additional discussion.
        $$
            \begin{array}{llll}
                a) & R_\mu(t) = \frac{1}{2(1-t)} & \nu = \frac{1}{2}\delta_{-1} + \frac{1}{2}\delta_1 & z=-1.5\\
                b) & R_\mu(t) = t            
                &\nu=\frac{1}{2}\delta_{-1}+\frac{1}{2}\delta_1
                &
                z=-3\\
                c)  &R_\mu(t)=t+\frac{t^2}{6}
                &
                \nu=\frac{1}{2}\delta_{-0.17}+\frac{1}{2}\delta_{-1.17}
                &
                z=-2.77\\
                d) & R_\mu(t)=t+\frac{t^2}{4.5}+\frac{t^3}{80}                    
                &\nu=\frac{1}{2}\delta_{-0.17}+\frac{1}{2}\delta_{-1.17}
                & z=-2.53725>z^*_{\mu \boxplus \nu} = -2.63725
            \end{array}
        $$
    }    
    
    \label{fig:intro_E_mu_nu_z_F_mu_nu_shapes1}    
    \centering
    \iftrue

    \definecolor{g1col}{rgb}{0,0,1} 
    \definecolor{g2col}{rgb}{1,0,0} 
    \definecolor{g3col}{rgb}{0,0.5,0} 
    \definecolor{g4col}{rgb}{0.5,0,0.5} 

    \def\xshiftmusc{4cm}
    \def\xshiftmuMP{0cm}
    \def\xshiftEmanycritpt{8cm}

    \begin{minipage}{\textwidth}
        \begin{tikzpicture}
            \begin{scope}[xshift=\xshiftEmanycritpt]
                \begin{axis}[ legend style={at={(0.5,-0.2)}, anchor=north, legend columns=1, legend cell align=left},
                    alignedplot,
                    xlabel={$g$},
                    xmin=0, xmax=4,
                    ymin=0.36, ymax=0.72,
                    title={$c)\quad E_{\mu,\nu,z}$},
                    clip=true]
                    
                    \addplot[domain=-1:1.03371, samples=300,thick, black] 
                        ({x}, {x^2/2 - x^3/9 + (1/2)*ln(1.6 - x + x^2/6) + (1/2)*ln(2.6 - x + x^2/6)});
                    \addplot[domain=1.03371:7.5, samples=300,dashed, thick, black, dash phase=3pt] 
                        ({x}, {x^2/2 - x^3/9 + (1/2)*ln(1.6 - x + x^2/6) + (1/2)*ln(2.6 - x + x^2/6)});

                    \pgfmathdeclarefunction{EfuncC}{1}{%
                        \pgfmathparse{((#1)^2)/2 - ((#1)^3)/9 + (1/2)*ln(1.6 - #1 + ((#1)^2)/6) + (1/2)*ln(2.6 - #1 + ((#1)^2)/6)}%
                    }
                    \pgfmathsetmacro{\EyValA}{EfuncC(0.830841)}
                    \addplot[g1col] coordinates {(-5,\EyValA) (5, \EyValA)};
                    
                    \addplot[dashed, g1col] coordinates {(0.830841, -5) (0.830841, 3.2)};               
                    \addplot[only marks,mark=x,mark size=1.8pt,g1col,thick] coordinates {(0.830841, EfuncC(0.830841)};
                    
                    \addplot[dashed, g2col] coordinates {(1.7432, -5) (1.7432, 3.2)};
                    \addplot[only marks,mark=x,mark size=1.8pt,g2col,thick] coordinates {(1.7432, EfuncC(1.7432)};                    
                    
                    \addplot[dashed, g3col] coordinates {(3.0, -5) (3.0, 3.2)};
                    \addplot[only marks,mark=x,mark size=1.8pt,g3col,thick] coordinates {(3.0, EfuncC(3.0)};                    
                    
                    \addplot[dashed, g4col] coordinates {(3.5912, -5) (3.5912, 3.2)};
                    \addplot[only marks,mark=x,mark size=1.8pt,g4col,thick] coordinates {(3.5912, EfuncC(3.5912)};                        
                \end{axis}
            \end{scope}
            \begin{scope}[xshift=\xshiftmusc]
                \begin{axis}[ legend style={at={(0.5,-0.2)}, anchor=north, legend columns=1, legend cell align=left},
                    alignedplot,
                    xlabel={$g$},
                    xmin=-1, xmax=2.2,
                    ymin=0.8, ymax=1.4,
                    title={$b)\quad E_{\mu,\nu,z}$},
                    clip=true
                ]                    
                    \addplot[domain=-1:0.866025, samples=300,thick, black] 
                        ({x}, {x^2/2  + 0.5*ln(-1 + 3 -x) + 0.5*ln(1 + 3 -x)});
                    \addplot[domain=0.866025:2, samples=300, dashed, thick, black,dash phase=3pt] 
                        ({x}, {x^2/2  + 0.5*ln(-1 + 3 -x) + 0.5*ln(1 + 3 -x)});

                    \pgfmathdeclarefunction{EfuncB}{1}{%
                        \pgfmathparse{((#1)^2)/2 + 0.5*ln(2 - #1) + 0.5*ln(4 - #1)}%
                    }

                    \pgfmathsetmacro{\EyValB}{EfuncB(0.467911)}
                    \addplot[g1col] coordinates {(-2,\EyValB) (5, \EyValB)};
                    
                    \addplot[dashed, g1col] coordinates {(0.467911, 0) (0.467911, 1.4)};
                    \addplot[only marks,mark=x,mark size=1.8pt,g1col,thick] coordinates {(0.467911, EfuncB(0.467911)};
                    
                    \addplot[dashed, g2col] coordinates {(1.6527, 0) (1.6527, 1.4)};
                    \addplot[only marks,mark=x,mark size=1.8pt,g2col,thick] coordinates {(1.6527, EfuncB(1.6527)}; 
                \end{axis}
            \end{scope}
            \begin{scope}[xshift=\xshiftmuMP]
                \begin{axis}[ legend style={at={(0.5,-0.2)}, anchor=north, legend columns=1, legend cell align=left},
                    alignedplot,
                    xlabel={$g$},
                    xmin=-1, xmax=5,
                    ymin=0.4, ymax=0.8,
                    title={$a)\quad E_{\mu,\nu,z}$},
                    clip=true
                ]
                    \addplot[domain=-0.9:5, samples=300,thick, black] 
                        ({x}, {(1/2)*(-1 + 1/(1 + x) + ln(1 + x)) + (1/2)*ln(0.5 + 1/(2*(1 + x))) + (1/2)*ln(2.5 + 1/(2*(1 + x)))});

                    \pgfmathdeclarefunction{EfuncA}{1}{%
                        \pgfmathparse{(1/2)*(-1 + 1/(1 + #1) + ln(1 + #1)) + (1/2)*ln(0.5 + 1/(2*(1 + #1))) + (1/2)*ln(2.5 + 1/(2*(1 + #1)))}%
                    }

                    \pgfmathsetmacro{\EyValB}{EfuncA(0.826462)}
                    \addplot[g1col] coordinates {(-2,\EyValB) (5, \EyValB)};
                
                    \addplot[dashed, g1col] coordinates {(0.826462, 0) (0.826462, 2)};
                    \addplot[only marks,mark=x,mark size=1.8pt,g1col,thick] coordinates {(0.826462, EfuncA(0.826462)};
                \end{axis}
            \end{scope}
            \begin{scope}[xshift=12cm]
                \begin{axis}[ legend style={at={(0.5,-0.2)}, anchor=north, legend columns=1, legend cell align=left},
                        alignedplot,
                        xlabel={$g$},
                        xmin=0, xmax=10,
                        ymin=-6, ymax=1.2,
                        title={$d)\quad E_{\mu,\nu,z}$},
                        clip=true
                    ]
                    \addplot[domain=-1:1.28098, samples=300,thick, black] 
                        ({x}, {x^2/2 -(2/3)* (x^3/4.5)+(3/4)* (x^4/80) + 0.5*ln(-0.17 + 2.53725 + (-x + x^2/4.5-x^3/80)) + 0.5*ln(-1.17 + 2.53725 +(-x + x^2/4.5-x^3/80))});
                    \addplot[domain=1.28098:10, samples=300,dashed, thick, black, dash phase=3pt] 
                        ({x}, {x^2/2 -(2/3)* (x^3/4.5)+(3/4)* (x^4/80) + 0.5*ln(-0.17 + 2.53725 + (-x + x^2/4.5-x^3/80)) + 0.5*ln(-1.17 + 2.53725 +(-x + x^2/4.5-x^3/80))});

                    \pgfmathdeclarefunction{EfuncD}{1}{%
                        \pgfmathparse{((#1)^2)/2 -(2/3)*((#1)^3)/4.5)+(3/4)*((#1)^4)/80) + 0.5*ln(-0.17 + 2.53725 + (-#1 + ((#1)^2)/4.5-((#1)^3)/80)) + 0.5*ln(-1.17 + 2.53725 +(-#1 + ((#1)^2)/4.5-((#1)^3)/80))}%
                    }
                    
                    \addplot[dashed,g2col] coordinates {(3.01905, -6) (3.01905, 1.2)};
                    \addplot[only marks,mark=x,mark size=1.8pt,g2col,thick] coordinates {(3.01905, EfuncD(3.01905)};
                    
                    \addplot[dashed, g3col] coordinates {(4.07941, -6) (4.07941, 1.2)};
                     \addplot[only marks,mark=x,mark size=1.8pt,g3col,thick] coordinates {(4.07941, EfuncD(4.07941)};
                     
                    \addplot[dashed,g4col] coordinates {(8.8328, -6) (8.8328, 1.2)};
                    \addplot[only marks,mark=x,mark size=1.8pt,g4col,thick] coordinates {(8.8328, EfuncD(8.8328)};
                \end{axis}
            \end{scope} 
            \begin{scope}[xshift=14cm,yshift=-0.8cm]
                                    \begin{axis}[
                                        hide axis,
                                        scale only axis,
                                        height=0cm,
                                        width=0cm,
                                        xmin=0, xmax=1,
                                        ymin=0, ymax=1,
                                        legend columns=4,
                                         legend style={cells={anchor=west}}
                                    ]
                    \addlegendimage{black,thick}
                    \addlegendentry{$E_{\mu,\nu,z}(g),g \in (0,g^*_{\mu\boxplus \nu})$}
                    \addlegendimage{black,dashed,thick}
                    \addlegendentry{$E_{\mu,\nu,z}(g),g \notin (0,g^*_{\mu\boxplus \nu})$}
                    \addlegendimage{g1col}
                    \addlegendentry{$U_{\mu \boxplus \nu}(z)$}
                    \addlegendimage{g1col,dashed}
                    \addlegendentry{$g_1(z)=G_{\mu\boxplus \nu}(z)$}
                    \addlegendimage{g2col,dashed}
                    \addlegendentry{$g_2(z)$}
                     \addlegendimage{dashed, g3col}
                    \addlegendentry{$g_3(z)$}
                    \addlegendimage{dashed,g4col}
                    \addlegendentry{$g_4(z)$}
                \end{axis}
            \end{scope}
            \begin{scope}[yshift=-6cm,xshift=\xshiftEmanycritpt]
                \begin{axis}[ legend style={at={(0.5,-0.2)}, anchor=north, legend columns=1, legend cell align=left},
                    alignedplot,
                    xlabel={$g$},
                    xmin=0, xmax=4,
                    ymin=-3, ymax=-2.6,
                    title={$c)\quad G_{\mu\boxplus\nu}^{\inv}$},
                    clip=true
                ]
                    \addplot[domain=0.1:1.17371, samples=300,thick, black] 
                        ({x}, {-x + x^2/6 + (-1 - 1.3399999999999999*x -    sqrt(-4*(0.6699999999999999 + 0.1989*x)*x + (1 + 1.3399999999999999*x)^2))/(2*x)});
                    \addplot[domain=1.03371:7.5, samples=300,dashed, thick, black] 
                        ({x},{-x + x^2/6 + (-1 - 1.3399999999999999*x -    sqrt(-4*(0.6699999999999999 + 0.1989*x)*x + (1 + 1.3399999999999999*x)^2))/(2*x)});

                    \pgfmathdeclarefunction{GinvC}{1}{%
                        \pgfmathparse{-#1 + (#1)^2/6 + (-1 - 1.3399999999999999*#1 - sqrt(-4*(0.6699999999999999 + 0.1989*#1)*#1 + (1 + 1.3399999999999999*#1)^2))/(2*#1)}%
                    }
                    
                    \addplot[dashed,black] coordinates {(-3, -2.69691) (7.5, -2.69691)};
                    \addplot[densely dotted,black] coordinates {(1.16103, -5) (1.16103, 0)};
                    \addplot[only marks,mark=x,mark size=1.8pt,black,thick] coordinates {(1.16103, -2.69691)};
                    %
                    
                    \addplot[densely dashdotted,black] coordinates {(-3, -2.77) (7.5, -2.77)};
                    
                    \addplot[dashed, g1col] coordinates {(0.830841, -5) (0.830841, 3.2)};
                    \addplot[only marks,mark=x,mark size=1.8pt,g1col,thick] coordinates {(0.830841, -2.77)};
                    
                    \addplot[dashed, g2col] coordinates {(1.7432, -5) (1.7432, 3.2)};
                    \addplot[only marks,mark=x,mark size=1.8pt,g2col,thick] coordinates {(1.7432, -2.77)};
                    
                    \addplot[dashed, g3col] coordinates {(3.0, -5) (3.0, 3.2)};
                    
                    \addplot[dashed, g4col] coordinates {(3.5912, -5) (3.5912, 3.2)};
                    \addplot[only marks,mark=x,mark size=1.8pt,g4col,thick] coordinates {(3.5912, -2.77)};
                \end{axis}
            \end{scope}
            \begin{scope}[xshift=\xshiftmusc,yshift=-6cm]
                \begin{axis}[ legend style={at={(0.5,-0.2)}, anchor=north, legend columns=1, legend cell align=left},
                    alignedplot,
                    xlabel={$g$},
                    xmin=0, xmax=2.5,
                    ymin=-4, ymax=-2,
                    title={$b)\quad G_{\mu\boxplus\nu}^{\inv}$},
                    clip=true
                ]
                    \addplot[domain=0.1:0.866025, samples=300,thick, black] 
                        ({x}, {-x + (-1 - sqrt(1 + 4*x^2))/(2*x)});
                    \addplot[domain=0.866025:4, samples=300, dashed, thick, black,dash phase=3pt]  ({x}, {-x + (-1 - sqrt(1 + 4*x^2))/(2*x)});

                    \addplot[dashed,black] coordinates {(-3, -2.59808) (7.5, -2.59808)};
                    \addplot[densely dotted,black] coordinates {(0.866025, -5) (0.866025, 0)};
                    \addplot[only marks,mark=x,mark size=1.8pt,black,thick] coordinates {(0.866025, -2.59808)};
            
                    \addplot[densely dashdotted,black] coordinates {(0, -3) (7.5, -3)};
                    
                    \addplot[dashed, g1col] coordinates {(0.467911, -6) (0.467911, -1.5)};
                    \addplot[only marks,mark=x,mark size=1.8pt,g1col,thick] coordinates {(0.467911, -3)};
                    
                    \addplot[dashed, g2col] coordinates {(1.6527, -6) (1.6527, -1.5)};                    
                    \addplot[only marks,mark=x,mark size=1.8pt,g2col,thick] coordinates {(1.6527, -3)};

                \end{axis}
            \end{scope}
            \begin{scope}[xshift=\xshiftmuMP,yshift=-6cm]
                \begin{axis}[ legend style={at={(0.5,-0.2)}, anchor=north, legend columns=1, legend cell align=left},
                    alignedplot,
                    xlabel={$g$},
                    xmin=0, xmax=4,
                    ymin=-3, ymax=-0.5,
                    title={$a)\quad G_{\mu\boxplus\nu}^{\inv}$},
                    clip=true
                ]
                    \addplot[domain=0.1:5, samples=300,thick, black] 
                        ({x}, {1/(2*(1 + x)) + (-1 - sqrt(1 + 4*x^2))/(2*x)});

                    \addplot[dashed,black] coordinates {(-3,-1) (7.5, -1)};
                
                    \addplot[densely dashdotted,black] coordinates {(0, -1.5) (4, -1.5)};
                    
                    \addplot[dashed, g1col] coordinates {(0.826462, -3) (0.826462, 2)};
                    \addplot[only marks,mark=x,mark size=1.8pt,g1col,thick] coordinates {(0.826462, -1.5)};
                \end{axis}
            \end{scope}
            \begin{scope}[xshift=12cm,yshift=-6cm]
                \begin{axis}[ legend style={at={(0.5,-0.2)}, anchor=north, legend columns=1, legend cell align=left},
                    alignedplot,
                    xlabel={$g$},
                    xmin=0, xmax=10,
                    ymin=-3.5, ymax=-1,
                    title={$d)\quad G_{\mu\boxplus\nu}^{\inv}$},
                    clip=true
                ]
                
                    \addplot[domain=0:1.28098, samples=300,thick, black] 
                        ({x}, {-x + 0.2222222222222222*x^2 - x^3/80 + 
                       (-1 - 1.3399999999999999*x - 
                        sqrt(-4*(0.6699999999999999 + 0.1989*x)*
                           x + (1 + 1.3399999999999999*x)^2))/(2*x)});
                    \addplot[domain=1.28098:10, samples=300,dashed, thick, black, dash phase=3pt] 
                        ({x}, {-x + 0.2222222222222222*x^2 - x^3/80 + 
                       (-1 - 1.3399999999999999*x - 
                        sqrt(-4*(0.6699999999999999 + 0.1989*x)*
                           x + (1 + 1.3399999999999999*x)^2))/(2*x)});

                    \addplot[dashed,black] coordinates {(-3, -2.63725) (20, -2.63725)};
                    \addplot[densely dotted,black] coordinates {(1.28098, -5) (1.28098, 0)};
                    \addplot[only marks,mark=x,mark size=1.8pt,black,thick] coordinates {(1.28098, -2.63725)};
            
                    \addplot[densely dashdotted,black] coordinates {(-1,-2.53725) (10, -2.53725)};
                    
                    \addplot[dashed, g2col] coordinates {(3.01905, -6) (3.01905, -0.8)}; 

                    \addplot[dashed, g3col] coordinates {(4.07941, -6) (4.07941, 1.2)};
                     \addplot[only marks,mark=x,mark size=1.8pt,g3col,thick] coordinates {(4.07941, -2.53725)};
                     
                    \addplot[dashed,g4col] coordinates {(8.8328, -6) (8.8328, -0.8)};

                \end{axis}
            \end{scope}
            \begin{scope}[xshift=13cm,yshift=-6.8cm]
                \begin{axis}[
                    hide axis,
                    scale only axis,
                    height=0cm,
                    width=0cm,
                    xmin=0, xmax=1,
                    ymin=0, ymax=1,
                    legend columns=5,
                    legend style={cells={anchor=west}}
                ]
                    \addlegendimage{black,thick}
                    \addlegendentry{$G^{\inv}_{\mu \boxplus \nu},g \in (0,g^*_{\mu\boxplus \nu})$}
                    \addlegendimage{black,dashed,thick}
                    \addlegendentry{$G^{\inv}_{\mu \boxplus \nu},g \notin (0,g^*_{\mu\boxplus \nu})$}
                    \addlegendimage{black,dashdotted}
                    \addlegendentry{$z$}
                    \addlegendimage{dashed,black}
                    \addlegendentry{$z^*_{\mu \boxplus \nu}$}
                    \addlegendimage{densely dashdotted,black}
                    \addlegendentry{$g^*_{\mu \boxplus \nu}$}
                    \addlegendimage{g1col,dashed}
                    \addlegendentry{$g_1(z)=G_{\mu\boxplus \nu}(z)$}
                    \addlegendimage{g2col,dashed}
                    \addlegendentry{$g_2(z)$}
                     \addlegendimage{dashed, g3col}
                    \addlegendentry{$g_3(z)$}
                    \addlegendimage{dashed,g4col}
                    \addlegendentry{$g_4(z)$}
                \end{axis}
            \end{scope}
            \begin{scope}[yshift=-12cm,xshift=\xshiftEmanycritpt]
                \begin{axis}[ legend style={at={(0.5,-0.2)}, anchor=north, legend columns=1, legend cell align=left},
                    alignedplot,
                    xlabel={$h$},
                    xmin=-2.2, xmax=-1.2,
                    ymin=-2.9, ymax=-2.6,
                    title={$c)\quad F_{\mu,\nu}$},
                    clip=true
                ]
                    \addplot[domain=-4.5:-1.2, samples=300,thick, black]
                        ({x}, {(1/6)*(-(1/(2*(-1.17 - x))) - 1/(2*(-0.17 - x)))^2 -  1/(2*(-1.17 - x)) - 1/(2*(-0.17 - x)) + x});
                
                    \addplot[densely dashdotted,black] coordinates {(-4.5, -2.77) (-1, -2.77)};
                    
                    \addplot[densely dotted,black] coordinates {(-1.76055, -5) (-1.76055, 2)};
                    
                    \addplot[dashed, black] coordinates {(-5,-2.69691) (-1,-2.69691)};

                    \addplot[only marks,mark=x,mark size=1.8pt,black,thick] coordinates {(-1.76055, -2.69691)};
                    
                    \addplot[dashed, g1col] coordinates {(-2.05421, -5) (-2.05421, 2)};
                    \addplot[only marks,mark=x,mark size=1.8pt,g1col,thick] coordinates {(-2.05421, -2.77)};
                    
                    \addplot[dashed, g2col] coordinates {(-1.53326, -5) (-1.53326, -2)};
                    \addplot[only marks,mark=x,mark size=1.8pt,g2col,thick] coordinates {(-1.53326, -2.77)};
                    
                    \addplot[dashed, g3col] coordinates {(-1.36371, -5) (-1.36371, -2)};
                    
                    \addplot[dashed, g4col] coordinates {(-1.32825, -5) (-1.32825, -2)};
                    \addplot[only marks,mark=x,mark size=1.8pt,g4col,thick] coordinates {(-1.32825, -2.77)};
                \end{axis}
            \end{scope}
            \begin{scope}[yshift=-12cm,xshift=\xshiftmusc]
                \begin{axis}[ legend style={at={(0.5,-0.2)}, anchor=north, legend columns=1, legend cell align=left},
                    alignedplot,
                    xlabel={$h$},
                    xmin=-3, xmax=-1,
                    ymin=-4, ymax=-2.3,
                    title={$b)\quad F_{\mu,\nu}$},
                    clip=true
                ]
                    \addplot[domain=-4:-1.1, samples=300,thick, black] 
                        ({x}, {(1/2)*(-(1/(-1 - x)) - 1/(1 - x)) + x});
                    
                    \addplot[densely dashdotted,black] coordinates {(-4, -3) (-0.7, -3)};
                    
                    \addplot[densely dotted, black] coordinates {(-1.73205, -6) (-1.73205, -1.5)};

                    \addplot[dashed, black] coordinates {(-4, -2.59808) (-0.7, -2.59808)};

                    \addplot[only marks,mark=x,mark size=1.8pt,black,thick] coordinates {(-1.73205, -2.59808)};

                    \addplot[dashed, g1col] coordinates {(-2.53209, -4) (-2.53209, -1.8)};
                    \addplot[only marks,mark=x,mark size=1.8pt,g1col,thick] coordinates {(-2.53209, -3)};
                    
                    \addplot[dashed, g2col] coordinates {(-1.3473, -4) (-1.3473, -1.8)};
                    \addplot[only marks,mark=x,mark size=1.8pt,g2col,thick] coordinates {(-1.3473, -3)};
                    
                \end{axis}
            \end{scope}
            \begin{scope}[yshift=-12cm,xshift=\xshiftmuMP]
                \begin{axis}[ legend style={at={(0.5,-0.2)}, anchor=north, legend columns=1, legend cell align=left},
                    alignedplot,
                    xlabel={$h$},
                    xmin=-2, xmax=-0.7,
                    ymin=-1.8, ymax=-0.8,
                    title={$a)\quad F_{\mu,\nu}$},
                    clip=true
                ]
                    \addplot[domain=-4:-0.3, samples=300,thick, black] 
                        ({x}, {1/(2*(1 + (1/2)*(1/(-1 - x) + 1/(1 - x)))) + x});
                
                    \addplot[densely dashdotted,black] coordinates {(-2, -1.5) (-0.7, -1.5)};
                    
                    \addplot[densely dotted, black] coordinates {(-1, -6) (-1, 0)};

                    \addplot[only marks,mark=x,mark size=1.8pt,black,thick] coordinates {(-1, -1)};                    
                    
                    \addplot[dashed, black] coordinates {(-2, -1) (-0.7, -1)};

                    \addplot[dashed, g1col] coordinates {(-1.77375, -6) (-1.77375, -0.6)};
                    \addplot[only marks,mark=x,mark size=1.8pt,g1col,thick] coordinates {(-1.77375, -1.5)};
                \end{axis}
            \end{scope}
            \begin{scope}[yshift=-12cm,xshift=12cm]
                \begin{axis}[
                        legend style={at={(0.5,-0.2)}, anchor=north, legend columns=1, legend cell align=left},
                        alignedplot,
                        xlabel={$h$},
                        xmin=-2, xmax=-1.1,
                        ymin=-2.8, ymax=-2.4,
                        title={$d)\quad F_{\mu,\nu}$},
                        clip=true
                    ]
                    \addplot[domain=-4.5:-1.2, samples=300,thick, black]
                        ({x}, {0.2222222222222222*(-(1/(2*(-1.17 - x))) - 1/(2*(-0.17 - x)))^2 + 
                       (1/80)*(-(1/(2*(-1.17 - x))) - 1/(2*(-0.17 - x)))^3 - 
                     1/(2*(-1.17 - x)) - 1/(2*(-0.17 - x)) + x}); 

                    \addplot[densely dashdotted,black] coordinates {(-4.5, -2.53725) (-1, -2.53725)};
                    
                    \addplot[densely dotted, black] coordinates {(-1.69464, -5) (-1.69464, 2)};

                    \addplot[dashed, black] coordinates {(-4.5, -2.63725) (-1, -2.63725)};

                   \addplot[only marks,mark=x,mark size=1.8pt,black,thick] coordinates {(-1.69464,-2.63725)};                    
                    
                    \addplot[dashed, g2col] coordinates {(-1.36233, -5) (-1.36233, -1)};

                    \addplot[dashed,g3col] coordinates {(-1.30737, -5) (-1.30737, -1)};
                    \addplot[only marks,mark=x,mark size=1.8pt,g3col,thick] coordinates {(-1.30737, -2.53725)};

                    \addplot[dashed,g4col] coordinates {(-1.2298, -5) (-1.2298, -1)};
                \end{axis}
            \end{scope}
            \begin{scope}[xshift=14cm,yshift=-12.9cm]
                \begin{axis}[
                        hide axis,
                        scale only axis,
                        height=0cm,
                        width=0cm,
                        xmin=0, xmax=1,
                        ymin=0, ymax=1,
                        legend columns=4,
                        legend style={cells={anchor=west}}
                    ]
                    \addlegendimage{black,thick}
                    \addlegendentry{$F^{\inv}_{\mu , \nu}(h),g \in (0,g^*_{\mu\boxplus \nu})$}
                    \addlegendimage{black,dashdotted}
                    \addlegendentry{$z$}
                    \addlegendimage{black,dashed}
                    \addlegendentry{$F_{\mu,\nu}(h^*_{\mu,\nu}) = z^*_{\mu\boxplus\nu}$}
                    \addlegendimage{black,densely dotted}
                    \addlegendentry{$h^*_{\mu,\nu} = G_\nu^{\inv}(g^*_{\mu \boxplus \nu})$}                    
                    \addlegendimage{g1col,dashed}
                    \addlegendentry{$G_\nu^{\inv}(g_1(z))$}
                    \addlegendimage{g2col,dashed}
                    \addlegendentry{$G_\nu^{\inv}(g_2(z))$}
                    \addlegendimage{dashed, g3col}
                    \addlegendentry{$G_\nu^{\inv}(g_3(z))$}
                    \addlegendimage{dashed,g4col}
                    \addlegendentry{$G_\nu^{\inv}(g_4(z))$}
                \end{axis}
            \end{scope}
        \end{tikzpicture}
    \end{minipage}
    \fi
\end{figure}

\begin{remark}[Additional discussion of Figure \ref{fig:intro_E_mu_nu_z_F_mu_nu_shapes1}]\suppressfloats
    \label{rem:fig_caption_overflow}
    The function $G^{[-1]}_{\mu \boxplus \nu}$ is the maximal real analytic extension of $G^{\inv}_{\mu \boxplus \nu}$ (see \eqref{eq:Grho_inv_ext_def}).

    \begin{enumerate}
        \item[a)] Note that this $\mu$ is the semicircle law $\mu = \musc[1]$.
        \item[b)] Note that this $\mu$ is the Marchenko-Pastur law $\mu_{\MP,1/2}$. The top plot shows that $g = G_{\mu \boxplus \nu}(z)$ is not the global minimum of $E_{\mu,\nu,z}$ in general.    
        \item[c)] The top plot shows that it is also not the lowest of all the local minima in general.    
        \item[d)] The top plot shows that $E_{\mu,\nu,z}$ can have local minima to the right of $g^*_{\mu \boxplus \nu}$, even when $z>z^*_{\mu \boxplus \nu}$. Note that $U_{\mu \boxplus \nu}(z)$ is not well-defined for this $z$.
    \end{enumerate}

    In each case, the $g_i(z)$ denote critical points of $E_{\mu,\nu,z}$, ordered right to left. In a,b,c) we have $g_1(z)=G_{\mu \boxplus \nu}(z)$, while in d) the quantity $G_{\mu \boxplus \nu}(z)$ is not well-defined and the critical points are denoted by $g_2(z),g_3(z),g_4(z)$. In the top row, the positions of the critical points are indicated by the coloured vertical lines. In the second row they are in the same position, and it can be observed that some of them correspond to solutions of $G^{[-1]}_{\mu \boxplus \nu}(g) = z$ - the ones that do not are instead solutions of $R_\mu'(-g) = 0$ (not shown). In the bottom row, the coloured lines indicate the position of $G_\nu^{\inv}(g_i(z))$, and it can be observed that the $g_i(z)$ that are solutions of $G^{[-1]}_{\mu \boxplus \nu}(g) = z$ also correspond to solutions of $F_{\mu,\nu}(h) = z$, and vice-versa.
\end{remark}

\fi



\section{Preliminaries on the  (real) $R$-transform and free additive convolution}\label{sect:R-trans}

In this section we define the $R$-transform as a real analytic function, which we call the ``real $R$-transform'', and give its relation to the free additive convolution. For this purpose we state the crucial Lemma \ref{lem:real_R_trans_extend} and Lemma \ref{lem:real_R_trans_add_cond}, but postpone their proofs until Section \ref{sect:R_transform_theory}. We also prove some important facts about the real $R$-transform, and about the Stieltjes transform as a real analytic function.

\subsection{Stieltjes transform restricted to the reals}
Recall that for any probability measure $\mu$ on $\mathbb{R}$, $\supp_{\pm}\mu$ is the sup resp. inf of the support of $\mu$. We write
\begin{equation}
    \overline{\supp}\,\mu
    =
    [ \supp_{-}\mu, \supp_{+}\mu ] \subset \mathbb{R}
\end{equation}
so that 
\begin{equation}\label{eq:supp_overline_complement}
    (\overline{\supp}\,\mu)^c
    =
    (-\infty,\supp_{-}\mu) \cup (\supp_{+}\mu,\infty).
\end{equation}
For any compactly supported $\mu$, it can be seen from its definition \eqref{eq:defSti} that the Stieltjes
transform $G_\mu$ restricted to $(\overline{\supp}\,\mu)^c$
is real-valued and real analytic, and
\begin{equation}\label{eq:G_mu_strict_inc}
    G_\mu
    \text{ is strictly increasing on } (-\infty,\supp_{-}\mu) \text{ and on } (\supp_{+}\mu,\infty).
\end{equation}
Furthermore $G_\mu$ is positive on $(-\infty,\supp_{-}\mu)$, and
negative on $(\supp_{+}\mu,\infty)$. Thus we can define
\begin{equation}\label{eq:G_mu_at_endpoints}
    G_\mu(\supp_{-}\mu) := \lim_{z\uparrow\supp_{-}\mu} G_\mu(z) \in (0,\infty]
    \quad\text{and}\quad
    G_\mu(\supp_{+}\mu) := \lim_{z\downarrow\supp_{+}\mu} G_\mu(z) \in [-\infty,0).
\end{equation}
In general, for any $D\subset\mathbb{R}$ and function 
\begin{equation}
    \begin{array}{c}\label{eq:f_well_defined_at_endpoints}
        f:D\to\mathbb{R}\text{ and }x\in{\rm cl}(D)\subset[-\infty,\infty]\text{, we say that}\text{ ``\ensuremath{f(x)} is well-defined'' and write \ensuremath{f(x):=\ensuremath{{\displaystyle {\lim_{y\to x}}f(y)}}}}\\
        \text{if that limit over \ensuremath{y\in D} exists as an extended real number in \ensuremath{[-\infty,\infty]}}.
    \end{array}
\end{equation}
Here the closure is taken in the topology of the extended real numbers, so e.g. ${\rm cl}([0,\infty))=[0,\infty)\cup\{\infty\}$.
In this sense a continuous function $f$ on $D\subset{\mathbb{R}}$ is well-defined for all $x\in D$, and e.g. if $f:\mathbb{R} \to \mathbb{R},f(t)=t$ then $f(\infty)$ is well-defined and equals $\infty$. Furthermore $G_\mu(\supp_{\pm}\mu)$ are well-defined by \eqref{eq:G_mu_at_endpoints}. Note however that strictly speaking we do not include $\supp_{\pm}\mu$ in the domain of $G_\mu$ (if we did then it would not be real analytic at $\supp_{\pm}\mu$, even when $G_\mu(\supp_{\pm}\mu)$ are finite - see Lemma \ref{lem:Gnotanal} below).

From \eqref{eq:defSti} it follows that
\begin{equation}\label{eq:G_mu_limit_z_to_inf}
    G_\mu(-\infty)
    =
    \lim_{z \downarrow -\infty} G_\mu(z)
    =
    G_\mu(\infty)
    =
    \lim_{z \uparrow \infty} G_\mu(z)
    =
    0.
\end{equation}
The above implies that
\begin{equation}\label{eq:G_mu_maps_bijectively}
    G_\mu\text{ maps }(-\infty,\supp_{-}\mu)\text{ to }(0,G_\mu(\supp_{-}\mu))\text{ bijectively},
\end{equation}
and $(\supp_{+}\mu,\infty)$ to $(G_\mu(\supp_{+}\mu),0)$ bijectively.
Thus $G_\mu|_{(\overline{\supp}\:\mu)^c}$ has a well-defined
inverse, which is real analytic and which we denote by
\begin{equation}
    G_\mu^{\inv}: (G_\mu(\supp_{+}\mu),0) \cup (0,G_\mu(\supp_{-}\mu)) \to 
    (\overline{\supp}\,\mu)^c.
\end{equation}
Recalling from \eqref{eq:def_Dhatmu_intro} the notation
\begin{equation}\label{eq:D_hat_def}
    \hat{D}_\mu
    :=
    (
        -G_\mu(\supp_{-}\mu)
        ,
        -G_\mu(\supp_{+}\mu)
    ),
\end{equation}
we see that $\dom(G_\mu^{\inv}) = -\hat{D}_\mu \setminus \{0\}$. By \eqref{eq:G_mu_maps_bijectively}
\begin{equation}\label{eq:Ginv_maps_bijecitvely}
    G_\mu^{\inv}
    \text{ maps }
    (0,G_\mu(\supp_{-}\mu))
    \text{\,to }
    (-\infty,\supp_{-}\mu)
    \text{ bijectively}
\end{equation}
and by \eqref{eq:G_mu_strict_inc}
\begin{equation}\label{eq:Ginv_increasing}
    G_\mu^{\inv}\text{ is strictly increasing on $(G_\mu(\supp_{+}\mu),0)$ and on $(G_\mu(\supp_{+}\mu),0)$}.
\end{equation}
This implies that
\begin{equation}\label{eq:Ginv_limit}
    G_\mu^{\inv}( G_\mu(\supp_{-}\mu) )
    \overset{\eqref{eq:f_well_defined_at_endpoints}}{=}
    \lim_{ g \uparrow G_\mu(\supp_{-}\mu) }
    G_\mu^{\inv}(g)
    =
    \supp_{-}\mu,
\end{equation}
(including if $G_\mu(\supp_{-}\mu) = \infty$).

\subsection{Definition of real $R$-transform}
We can thus define the function
\begin{equation}\label{eq:def_real_R_trans_hat}
    \hat{R}_\mu(u)
    =
    G_\mu^{\inv}(-u)
    -
    u^{-1},
    \quad\quad\quad\quad
    \hat{R}_\mu:\hat{D}_\mu\setminus\{0\}\to\mathbb{R},
\end{equation}
Note that $\hat{R}_\mu(g)$ is well-defined for $g = -G_\mu(\supp_{\pm}\mu)$ in the sense of \eqref{eq:f_well_defined_at_endpoints} (since $G_\mu^{\inv}(g)$ is - see \eqref{eq:Ginv_limit}), but again we do not claim that we obtain an analytic function if we include $g$ in the domain of $\hat{R}_\mu$ (sometimes we do, sometimes we don't). However, $\hat{R}_\mu$ always has a unique real analytic extension $R_\mu$ to a maximal interval of $\mathbb{R}$ (possibly equalling $\hat{D}_\mu$, if the extension is trivial), as stated by the following lemma. It is this uniquely defined extension $R_\mu$ which we call the \emph{real $R$-transform} of the probability measure $\mu$.

\begin{lemma}[Real $R$-transform]\label{lem:real_R_trans_extend}
    Let $\mu$ be a probability measure on $\mathbb{R}$ with compact support. There exists 
    \begin{equation}\label{[lem:real_R_trans_extend]eq:D_mu}
        \text{a unique open interval }D_\mu\subseteq\mathbb{R} \text{ s.t. }\hat{D}_\mu \subseteq D_\mu\text{, and}
    \end{equation}
    \begin{equation}
        \text{a unique real analytic function } R_\mu:D_\mu\to\mathbb{R},
    \end{equation}
    such that for any open interval $D\subset\mathbb{R}$ containing $\hat{D}_\mu$, and any real analytic extension $f:D\to\mathbb{R}$ of $\hat{R}_\mu$ from $\hat{D}_\mu \setminus \{0\}$ to $D$, in fact $D \subset D_\mu$, and $f=R_\mu|_D$.
\end{lemma}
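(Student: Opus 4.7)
The plan is to combine one non-trivial analytic continuation across the point $0 \in \hat{D}_\mu$ with a standard gluing argument for real analytic functions.

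The first step is to show that $\hat{R}_\mu$ itself extends real-analytically across $u = 0$, and in particular admits a real analytic extension to all of $\hat{D}_\mu$. The apparent singularity $-u^{-1}$ in \eqref{eq:def_real_R_trans_hat} is cancelled by the corresponding singular behaviour of $G_\mu^{\inv}(-u)$ near $u=0$: from \eqref{eq:defSti} one has the Laurent expansion $G_\mu(z) = -\sum_{k\ge 0} m_k z^{-k-1}$ for $|z|$ larger than $\max(|\supp_-\mu|, |\supp_+\mu|)$, where $m_k = \int \lambda^k \mu(d\lambda)$; inverting this formal power series gives $G_\mu^{\inv}(-u) = u^{-1} + m_1 + (m_2 - m_1^2)u + O(u^2)$, so $\hat{R}_\mu(u) = m_1 + (m_2-m_1^2) u + O(u^2)$ has a convergent power series around $0$. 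I expect this to be the main obstacle, and presumably it is why the paper defers the proof to Section \ref{sect:R_transform_theory}, where the connection with the standard holomorphic theory of $R$-transforms developed in \cite{speicher} can be invoked to rigorously justify the inversion and convergence.

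Having established a real analytic extension on some open interval $D_0 \supseteq \hat{D}_\mu$, uniqueness of extensions follows from the identity theorem for real analytic functions. If $f_1 : D_1 \to \mathbb{R}$ and $f_2 : D_2 \to \mathbb{R}$ are two real analytic extensions of $\hat{R}_\mu$ on open intervals $D_1, D_2 \supseteq \hat{D}_\mu$, then $D_1 \cap D_2$ is a connected open interval containing $\hat{D}_\mu$, and $f_1, f_2$ agree on the non-empty open subset $\hat{D}_\mu \setminus \{0\}$; by the identity theorem they agree on all of $D_1 \cap D_2$, and hence glue to a single real analytic extension on the open interval $D_1 \cup D_2$ (which is an interval because $D_1 \cap D_2 \ne \emptyset$).

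I would then define $D_\mu$ as the union of all open intervals $D \supseteq \hat{D}_\mu$ on which $\hat{R}_\mu$ admits a real analytic extension, and $R_\mu(u)$ for $u \in D_\mu$ as the common value of any such extension defined at $u$. Since all the participating intervals share the open subinterval $\hat{D}_\mu$, the union $D_\mu$ is itself an open interval; the function $R_\mu$ is well-defined by the compatibility argument of the previous paragraph, and real analytic on $D_\mu$ because it locally coincides with one of the analytic extensions. Maximality and uniqueness of the pair $(D_\mu, R_\mu)$ then follow directly from the construction.
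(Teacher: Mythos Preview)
Your proposal is correct and follows essentially the same route as the paper. The paper (in Lemma~\ref{lem:real_R_trans_extend_sec_4}) handles the extension across $u=0$ exactly as you anticipate: it invokes \cite[Theorem~17, Chapter~3]{speicher} to get a holomorphic inverse of $G_\mu$ on some $B(0,s)^c$, so that $(G_\mu|_{B(0,s)^c})^{\inv}(-u)-u^{-1}$ is holomorphic in a complex neighbourhood of $0$ and agrees with $\hat{R}_\mu$ on a punctured real interval around $0$; this is the rigorous version of your Laurent-series inversion. The remaining gluing/maximality argument via the identity theorem is the same standard step in both.
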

Note that necessarily $0 \in D_\mu$ (recall \eqref{eq:D_hat_def}). Lemma \ref{lem:real_R_trans_extend} is proved in Section \ref{sect:R_transform_theory}.
\subsection{Free additive convolution in terms of real $R$-transform}
The next lemma, also proved in Section \ref{sect:R_transform_theory}, relates the real $R$-transform to the free additive convolution.
\begin{lemma}[Free additive convolution in terms of real $R$-transform]\label{lem:real_R_trans_add_cond}
    For any compactly supported probability measures $\mu$ and $\nu$ on $\mathbb{R}$, there exists a unique compactly supported probability measure $\rho$ s.t.
    \begin{equation}\label{eq:add_conv_R_trans_real}
        D_\mu\cap D_{\nu} \subset D_\rho
        \quad\quad\text{ and }\quad\quad
        R_\rho(u)=R_\mu(u)+R_\nu(u)\quad\text{for all}\quad u\in D_\mu\cap D_{\nu}.
    \end{equation}
\end{lemma}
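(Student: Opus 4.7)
The plan is to take $\rho := \mu \boxplus \nu$ from the classical theory (Speicher \cite{speicher}, Theorem 18, Chapter 2), which already supplies a compactly supported probability measure with $R_\rho = R_\mu + R_\nu$ as formal power series around $0$, and then verify that this classical identity in fact upgrades to the real-analytic identity on all of $D_\mu \cap D_\nu$ by invoking the maximal-extension characterization in Lemma \ref{lem:real_R_trans_extend}.

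First I would argue that each classical $R$-transform coincides with the real $R$-transform of this paper on a small open interval around $0$. Since $\mu$ has compact support, its Stieltjes transform $G_\mu$ is holomorphic in a neighborhood of $\infty$, and so $G_\mu^{\inv}$ is holomorphic (in particular, real analytic) in a neighborhood of $0$. Restricted to the reals this is exactly the map $\hat{R}_\mu(u) = G_\mu^{\inv}(-u) - u^{-1}$ from \eqref{eq:def_real_R_trans_hat}, extended analytically across $0$, and it agrees with the classical formal power series $R$-transform on that neighborhood. By Lemma \ref{lem:real_R_trans_extend}, the real analytic function $R_\mu$ on $D_\mu$ is the unique maximal extension of $\hat{R}_\mu$; hence on a common small interval $I\ni 0$ one has
\begin{equation*}
    R_\rho(u) = R_\mu(u) + R_\nu(u).
\end{equation*}

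Next I would glue. The set $D := D_\rho \cup (D_\mu \cap D_\nu)$ is the union of two open intervals, each containing $0$, hence is itself an open interval containing $\hat{D}_\rho$. Define $f:D\to\mathbb{R}$ by $f:=R_\rho$ on $D_\rho$ and $f:=R_\mu+R_\nu$ on $D_\mu\cap D_\nu$. On the intersection $D_\rho \cap (D_\mu \cap D_\nu)$, which is a (nonempty) open interval containing $0$, the two definitions give real analytic functions that agree on $I$, so by the identity theorem for real analytic functions they agree on the whole intersection. Thus $f$ is a well-defined real analytic function on $D$ extending $\hat{R}_\rho$. Applying the uniqueness clause of Lemma \ref{lem:real_R_trans_extend} to $\rho$, we conclude $D \subset D_\rho$ and $f = R_\rho|_D$, which gives both $D_\mu \cap D_\nu \subset D_\rho$ and the identity $R_\rho = R_\mu + R_\nu$ on $D_\mu \cap D_\nu$.

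Finally, for uniqueness: any probability measure $\rho'$ with compact support satisfying \eqref{eq:add_conv_R_trans_real} would have $R_{\rho'}(u) = R_\mu(u) + R_\nu(u)$ on a neighborhood of $0$ in $D_\mu \cap D_\nu$, hence its $R$-transform power series at $0$ matches that of $\rho$. Inverting the definition \eqref{eq:R-transf}/\eqref{eq:def_real_R_trans_hat} on that neighborhood yields $G_{\rho'} = G_\rho$ in a neighborhood of $\infty$, and a compactly supported measure on $\mathbb{R}$ is determined by its Stieltjes transform there, so $\rho' = \rho$. The main obstacle is really just the bookkeeping at step one --- making sure that the classical formal/complex-analytic $R$-transform genuinely coincides with $\hat{R}_\mu$ (and thus with $R_\mu$) near $0$; once that link is established, the identity theorem plus the maximality in Lemma \ref{lem:real_R_trans_extend} do all the remaining work with essentially no computation.
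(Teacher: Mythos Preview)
Your proposal is correct and follows essentially the same route as the paper's proof (given in Section \ref{sect:R_transform_theory}, Lemma \ref{lem:add_conv_sec4}): take $\rho=\mu\boxplus\nu$ from the classical theory, identify the classical $R$-transform with the real $R$-transform on a small real interval containing $0$, and then use real analyticity to propagate the identity $R_\rho=R_\mu+R_\nu$ to all of $D_\mu\cap D_\nu$. Your gluing argument via the maximality clause of Lemma \ref{lem:real_R_trans_extend} is in fact a cleaner way to extract the domain inclusion $D_\mu\cap D_\nu\subset D_\rho$ than the paper's terse ``by real analyticity''; the paper instead routes through \cite[Theorems 26, 28, Chapter 3]{speicher} (complex $R$-transform functions on disks tangent to $\mathbb{R}$ at $0$) rather than directly through the formal power series of \cite[Theorem 18, Chapter 2]{speicher}, which adds a bit of bookkeeping but is otherwise the same mechanism.
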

We denote the uniquely defined $\rho$ from Lemma \ref{lem:real_R_trans_add_cond} by $\mu\boxplus\nu$, and call it the \emph{free additive convolution of $\mu$ and $\nu$}.
In Section \ref{sect:R_transform_theory}, Lemma \ref{lem:real_R_trans_extend} and Lemma \ref{lem:real_R_trans_add_cond} are derived from the standard theory of $R$-transforms.

By \cite[Lemma 6.1]{GM} it holds for any compactly supported $\mu$ and $\nu$ that
\begin{equation}\label{eq:add_conv_G_at_endpoint_ineq}
     G_{\mu\boxplus\nu}(\supp_{-}\mu\boxplus\nu)
     \le
     \min( G_\mu(\supp_{-}\mu), G_{\nu}(\supp_{-}\nu) .
\end{equation}
Recalling \eqref{eq:D_hat_def}, this implies that $\hat{D}_{\mu \boxplus \nu} \subset \hat{D}_\mu \cap \hat{D}_\nu$, which combined with Lemma \ref{lem:real_R_trans_extend} and Lemma \ref{lem:real_R_trans_add_cond} 
yields
\begin{equation}\label{eq:D_sandwich}
    \hat{D}_{\mu \boxplus \nu}
    \subset
    \hat{D}_\mu\cap \hat{D}_{\nu}
    \subset
    D_\mu \cap D_{\nu}
    \subset
    D_{\mu \boxplus \nu}.
\end{equation}

\subsection{Further basic properties of the Stieltjes transform and real $R$-transform}

Next we proceed to prove some useful facts about the Stieltjes transform restricted to the reals, and about the real $R$-transform.

The following generalizes \eqref{eq:G_mu_strict_inc}, \eqref{eq:G_mu_at_endpoints} and \eqref{eq:G_mu_limit_z_to_inf} to the $k$-th derivative $G^{(k)}_\mu$ of $G_\mu$.
\begin{lemma}\label{lem:Gmu-kth-deriv}
    Let $\rho$ be a probability measure on $\mathbb{R}$ and $k\ge0$. Then
    \begin{equation}\label{eq:Gmu_k-th_deriv}
        G_\rho^{(k)}(z)
        =
        k!\int\frac{1}{(x-z)^{k+1}}\rho(dx)
        \quad\text{for all}\quad
        z \notin \supp\,\rho.
    \end{equation}
    Furthermore $G_\rho^{(k)}$ is real-valued and real analytic on $(\overline{\supp}\,\rho)^c$, strictly monotone on $(\supp_+ \rho,\infty)$,
    \begin{equation}\label{eq:mu_k-th_deriv_pos_inc}
        \text{ positive and strictly increasing on } (-\infty,\supp_-\rho),
    \end{equation}    
    \begin{equation}\label{eq:G_lim_z_to_-inf}
        \text{ and }
        G^{(k)}_\rho(-\infty)
        \overset{\eqref{eq:f_well_defined_at_endpoints}}{=}
        \lim_{ z \downarrow -\infty} G_\rho^{(k)}(z) = 0.
    \end{equation}

\end{lemma}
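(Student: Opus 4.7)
The plan is to establish the integral representation \eqref{eq:Gmu_k-th_deriv} first, by induction on $k$, and then read off every remaining assertion directly from it. The base case $k=0$ is the definition \eqref{eq:defSti}. For the inductive step, the key observation is that for any $z_0 \in (\overline{\supp}\,\rho)^c$ and $\delta = \tfrac12\,\mathrm{dist}(z_0, \overline{\supp}\,\rho)>0$, one has $|x-z| \geq \delta$ for $\rho$-a.e.\ $x$ uniformly in $z \in (z_0 - \delta, z_0 + \delta)$. The $z$-derivative of the integrand $(k-1)!/(x-z)^k$ is then bounded in absolute value by $k!\,\delta^{-(k+1)}$, which is $\rho$-integrable because $\rho$ is a probability measure. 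Dominated convergence therefore permits differentiation under the integral at $z_0$, producing \eqref{eq:Gmu_k-th_deriv}.

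Real-valuedness on $(\overline{\supp}\,\rho)^c$ is immediate from the formula. For real analyticity, the cleanest route is to note that $G_\rho$ extends to a holomorphic function on $\mathbb{C}\setminus\supp\,\rho$ (by the same dominated convergence argument applied to the holomorphic integrand $(x-z)^{-1}$ in the complex variable $z$); the restriction of a holomorphic function to any real interval inside its domain is real analytic, and so is each of its derivatives.

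For the monotonicity and positivity claims I would apply \eqref{eq:Gmu_k-th_deriv} to the next derivative:
\begin{equation*}
    G_\rho^{(k+1)}(z) = (k+1)! \int \frac{1}{(x-z)^{k+2}} \rho(dx).
\end{equation*}
On $(-\infty, \supp_-\rho)$ one has $x - z > 0$ for $\rho$-a.e.\ $x$, so both $G_\rho^{(k)}(z)$ and $G_\rho^{(k+1)}(z)$ are strictly positive, establishing \eqref{eq:mu_k-th_deriv_pos_inc}. On $(\supp_+\rho,\infty)$ one has $x - z < 0$, so $(x-z)^{k+2}$ has the constant sign $(-1)^{k+2}=(-1)^k$ on the support and hence $G_\rho^{(k+1)}$ is nowhere zero, which yields strict monotonicity of $G_\rho^{(k)}$ on that interval.

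Finally, \eqref{eq:G_lim_z_to_-inf} is vacuous if $\supp_-\rho = -\infty$; otherwise \eqref{eq:Gmu_k-th_deriv} combined with the bound $x - z \geq \supp_-\rho - z$ valid for $\rho$-a.e.\ $x$ and $z < \supp_-\rho$ gives
\begin{equation*}
    0 < G_\rho^{(k)}(z) \leq \frac{k!}{(\supp_-\rho - z)^{k+1}},
\end{equation*}
which tends to $0$ as $z \downarrow -\infty$. No step is genuinely hard; the only point that requires any care is the interchange of derivative and integral, and this is handled by the uniform lower bound on $|x-z|$ from the induction step above.
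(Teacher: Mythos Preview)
Your proof is correct and follows the same approach as the paper's (derive the integral formula \eqref{eq:Gmu_k-th_deriv}, then read off everything else), only with considerably more detail where the paper writes a single sentence. One tiny slip: in your induction you take $z_0\in(\overline{\supp}\,\rho)^c$ and $\delta=\tfrac12\,\mathrm{dist}(z_0,\overline{\supp}\,\rho)$, which establishes \eqref{eq:Gmu_k-th_deriv} only on the complement of the convex hull rather than for all $z\notin\supp\rho$ as stated; replacing $\overline{\supp}\,\rho$ by $\supp\rho$ in that step fixes this with no change to the argument.
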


\begin{proof}
    The identity \eqref{eq:Gmu_k-th_deriv} follows from the definition \eqref{eq:defSti} of $G_\rho$ and that $\partial^k_z(\l-z)^{-1}=k!(\l-z)^{k+1}$. The rest of the claims follow directly from \eqref{eq:Gmu_k-th_deriv}.
\end{proof}

Thanks to the monotonicity of $G_\rho^{(k)}$
\begin{equation}\label{eq:G_deriv_limits}
    G^{(k)}_\rho( \supp_{-} \rho ) = \lim_{z\uparrow\supp_{-}\rho}G_\rho^{(k)}(z) \in (0,\infty]
    \quad\text{for all}\quad
    k\ge0,
\end{equation}
(recall \eqref{eq:G_mu_at_endpoints}-\eqref{eq:f_well_defined_at_endpoints}). From \eqref{eq:Gmu_k-th_deriv} we see that $G_\rho^{(k)}(\supp_-\rho) < \infty$ iff $\rho(dx)/(x-z)^{k+1}$ is integrable in $(\supp_-\rho,\supp_-\rho+1)$, and that therefore for all $k\ge0$
\begin{equation}\label{eq:G^(k)_mu_deriv_inf_implies_higher_derivs_inf}
    G_\rho^{(k)}(\supp_-\rho) = \infty
    \quad \implies \quad
    G_\rho^{(l)}(\supp_-\rho) = \infty
    \text{ for all }l\ge k.
\end{equation}

Next we prove  that for any compactly supported measure, the associated $G_\rho$ is not analytic at the left boundary of the support. 

\begin{lemma}\label{lem:Gnotanal}
    Let $\rho$ be a non-degenerate probability measure on $\mathbb{R}$ s.t. $\supp_{-} \rho \ne -\infty$. Then $G_\rho|_{(\overline{\supp}\,\rho)^c}$
    cannot be analytically continued to any open interval containing $\supp_-\rho$. 
\end{lemma}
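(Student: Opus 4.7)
The plan is to argue by contradiction: assume that $G_\rho|_{(-\infty, a)}$ (with $a := \supp_{-}\rho > -\infty$) admits a real analytic extension $\tilde G$ to an open interval containing $a$, and then compare two bounds on the radius of convergence of the Taylor series of $\tilde G$ at a suitable base point $x_0 \in (-\infty, a)$ close to $a$.

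The upper bound on the radius comes from $\rho$. Since $\tilde G = G_\rho$ on $(-\infty, a)$, their Taylor series at $x_0$ coincide, and by Lemma \ref{lem:Gmu-kth-deriv} the coefficients are
\begin{equation*}
    a_k = \frac{G_\rho^{(k)}(x_0)}{k!} = \int \frac{\rho(d\lambda)}{(\lambda - x_0)^{k+1}} > 0.
\end{equation*}
The key input is that $a \in \supp\rho$ together with $\rho((-\infty, a)) = 0$, which implies $\rho([a, a+\eta)) > 0$ for every $\eta > 0$. Restricting the integral defining $a_k$ to $[a, a+\eta)$ and bounding $\lambda - x_0 \leq a + \eta - x_0$ on that set gives
\begin{equation*}
    a_k \geq \frac{\rho([a, a+\eta))}{(a + \eta - x_0)^{k+1}}.
\end{equation*}
Taking $k$-th roots and letting $k \to \infty$ yields $\limsup_k a_k^{1/k} \geq (a+\eta-x_0)^{-1}$, and then sending $\eta \downarrow 0$ gives $\limsup_k a_k^{1/k} \geq (a - x_0)^{-1}$, so the radius of convergence satisfies $R \leq a - x_0$.

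The lower bound on the radius comes from $\tilde G$. Real analyticity of $\tilde G$ at $a$ means that $\tilde G$ extends holomorphically to a complex disk $D(a, \epsilon)$ for some $\epsilon > 0$, and by the triangle inequality the Taylor series of $\tilde G$ at any $x_0 \in D(a, \epsilon/2)$ has radius of convergence at least $\epsilon/2$. Choosing $x_0 \in (a - \epsilon/4, a)$ makes $a - x_0 < \epsilon/4 < \epsilon/2$, contradicting the upper bound $R \leq a - x_0$. I do not anticipate a serious technical obstacle; the only subtlety is navigating between real analyticity and complex holomorphy, which is handled by the standard fact that a real analytic function on an interval automatically extends holomorphically to a complex neighborhood of each point.
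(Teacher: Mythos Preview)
Your proof is correct, and it rests on the same core estimate as the paper: the positive mass $\rho([a,a+\eta))>0$ forces the coefficient bound $G_\rho^{(k)}(x_0)/k!\ge \rho([a,a+\eta))\,(a+\eta-x_0)^{-(k+1)}$, which caps the radius of convergence of the Taylor series.

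The execution differs in the choice of base point. The paper expands at $a=\supp_-\rho$ itself: it first disposes of the case where some $G_\rho^{(k)}(a)=\infty$, and otherwise shows that the Taylor series at $a$ has radius of convergence zero by plugging the coefficient bound directly into the series with $\delta=\Delta/2$. You instead expand at $x_0<a$, where all derivatives are automatically finite; the root test gives $R\le a-x_0$, and the lower bound $R\ge\epsilon/2$ comes from the hypothetical holomorphic extension around $a$, yielding the contradiction for $x_0$ close to $a$. Your route avoids the case distinction at the boundary, at the price of passing through the complex picture (the extension of $\tilde G$ to $D(a,\epsilon)$ and, implicitly, the identity theorem ensuring it agrees with $G_\rho$ in a real neighbourhood of $x_0$); the paper's route stays entirely on the real line but implicitly uses that the one-sided limits $G_\rho^{(k)}(a)$ coincide with the Taylor coefficients of the putative extension. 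Both arguments are short and essentially equivalent.
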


\begin{proof}
    
    Recall \eqref{eq:G_deriv_limits}. If 
    $G_\rho( \supp_{-} \rho ) = \infty$, then clearly $G_\rho$ can not be analytically continued to a neighbourhood containing $\supp_{-}\rho$. Similarly, if there is some $k\ge1$ such that 
    $G^{(k)}_\rho( \supp_{-} \rho ) = \infty$, then $G_\rho$ can not be
    analytically continued to such an interval. Thus we may w.l.o.g. assume
    that
    \begin{equation}
        G^{(k)}_\rho( \supp_{-} \rho ) \in (0,\infty)
        \quad\text{for all}\quad
        k\ge0.
    \end{equation}
    This implies that $G_\rho$ can be extended by continuity from $(-\infty, \supp_{-} \rho)$ to $(-\infty, \supp_{-} \rho]$, and is infinitely continuously differentiable on $(-\infty, \supp_{-} \rho]$.
    Consider now the formal power series
    \begin{equation}\label{eq:power_series}
        \Delta
        \to
        \sum_{k=0}^{\infty} \Delta^k \frac{G_\rho^{(k)}(\supp_{-}\rho)}{k!}.
    \end{equation}
    If $G_\rho$ can be analytically continued to an open
    set containing $\supp_{-}\rho$, then this power series must have
    a finite radius of convergence. Let
    \begin{equation}\label{eq:epsilon(delta)}
        \varepsilon(\delta):=\rho([\supp_{-}\rho,\supp_{-}\rho+\delta])
    \end{equation}
    and note that $\varepsilon(\delta)>0$ for all $\delta>0$, since by definition $\supp_{-}\rho = \inf \supp \rho$.
    For any $\delta>0$ 
    \begin{equation}
        G_\rho^{(k)}(\supp_{-}\rho)
        \overset{\eqref{eq:Gmu_k-th_deriv}}{\ge}
        \int_{\supp_{-}\rho}^{\supp_{-}\rho+\delta}
            \frac{k!}{(x-\supp_{-}\rho)^{k+1}}
        \rho(dx)
        \overset{\eqref{eq:epsilon(delta)}}{\ge}
        \frac{k!}{\delta^{k+1}} \varepsilon(\delta).
    \end{equation}
    Thus for any $\Delta>0$ it holds for $\delta=\Delta/2$ that
    \begin{equation}
        \sum_{k=0}^{\infty}\frac{\Delta^k}{k!}G_\rho^{(k)}(\supp_{-}\rho)
        \ge
        \sum_{k=0}^{\infty}\frac{\Delta^k}{k!}\frac{k!}{\delta^{k+1}}\varepsilon(\delta)
        =
        \frac{\varepsilon(\delta)}{\delta \Delta}\sum_{k=0}^{\infty}2^{k+1}
        =
        \infty.
    \end{equation}
    Therefore \eqref{eq:power_series} is not convergent for any $\Delta>0$.
    Thus $G_\rho$ can not be analytically continued to
    any open interval containing $\supp_{-}\rho$.
\end{proof}

The next lemma proves that the real $R$-transform $R_\rho$ is strictly increasing on the interval $\hat{D}_\rho$ from \eqref{eq:D_hat_def}.

\begin{lemma}\label{lemma:Rmu_increasing}
    For any non-degenerate compactly supported probability measure $\rho$ on $\mathbb{R}$, the real $R$-transform $R_\rho$ 
    satisfies $R_\rho'(t)>0$ for $t \in \hat{D}_\rho$.
\end{lemma}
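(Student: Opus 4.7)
The plan is to split the domain into two cases: $u \in \hat{D}_\rho \setminus \{0\}$, where $R_\rho$ coincides with $\hat{R}_\rho$ and its derivative admits an explicit formula reducing to the Cauchy--Schwarz inequality, and the single point $u = 0 \in \hat{D}_\rho$, which is handled by the analyticity of $R_\rho$ (from Lemma \ref{lem:real_R_trans_extend}) combined with an asymptotic expansion of $G_\rho$ at infinity. Note that $0 \in \hat{D}_\rho$ since $G_\rho(\supp_- \rho) > 0 > G_\rho(\supp_+ \rho)$.

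For $u \in \hat{D}_\rho \setminus \{0\}$, I would differentiate the identity $\hat{R}_\rho(u) = G_\rho^{\inv}(-u) - u^{-1}$ from \eqref{eq:def_real_R_trans_hat}. Setting $z = G_\rho^{\inv}(-u)$, so that $G_\rho(z) = -u$ and hence $u^2 = G_\rho(z)^2$, the inverse function theorem gives
\[
    R_\rho'(u) \;=\; \frac{1}{u^2} - \frac{1}{G_\rho'(z)} \;=\; \frac{G_\rho'(z) - G_\rho(z)^2}{u^2 \, G_\rho'(z)}.
\]
By Lemma \ref{lem:Gmu-kth-deriv}, $G_\rho(z) = \int (\lambda - z)^{-1} \rho(d\lambda)$ and $G_\rho'(z) = \int (\lambda - z)^{-2} \rho(d\lambda)$, so the numerator is the variance of the random variable $(\lambda - z)^{-1}$ under $\rho$. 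The Cauchy--Schwarz inequality yields $G_\rho(z)^2 \le G_\rho'(z)$, with equality only when $(\lambda - z)^{-1}$ is $\rho$-a.s.\ constant; since $\rho$ is non-degenerate and $z \notin \supp \rho$, this would force $\rho$ to be a Dirac mass, a contradiction. Hence $R_\rho'(u) > 0$ on $\hat{D}_\rho \setminus \{0\}$.

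For $u = 0$, Lemma \ref{lem:real_R_trans_extend} guarantees that $R_\rho$ is real analytic on an open interval $D_\rho \supseteq \hat{D}_\rho$ containing $0$, so $R_\rho'$ is continuous at $0$ and $R_\rho'(0) = \lim_{u \to 0} R_\rho'(u) \ge 0$. To upgrade this to strict positivity, I would substitute the large-$|z|$ expansion $G_\rho(z) = -\sum_{k \ge 0} m_k z^{-k-1}$ (with $m_k = \int \lambda^k \rho(d\lambda)$) into the formula above. A short coefficient comparison gives $G_\rho'(z) - G_\rho(z)^2 = (m_2 - m_1^2) z^{-4} + O(z^{-5})$ and $u^2 G_\rho'(z) = z^{-4} + O(z^{-5})$, so $\lim_{u \to 0} R_\rho'(u) = m_2 - m_1^2 = \Var(\rho) > 0$, again by non-degeneracy.

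The main obstacle is precisely the value at $u = 0$: the Cauchy--Schwarz argument gives $R_\rho'(u) > 0$ on the punctured interval but does not by itself rule out a zero of the analytic function $R_\rho'$ at the origin, since the factor $1/u^2$ in the explicit formula blows up there. The asymptotic expansion above is what resolves the apparent $0/0$ and identifies the limit as the strictly positive variance of $\rho$, closing the argument.
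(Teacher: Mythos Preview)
Your proof is correct and follows essentially the same approach as the paper: differentiate $\hat{R}_\rho$ via the inverse function theorem, change variables to $z = G_\rho^{\inv}(-t)$, and reduce the positivity to the strict Cauchy--Schwarz inequality $\int(\lambda-z)^{-2}\,\rho(d\lambda) > \bigl(\int(\lambda-z)^{-1}\,\rho(d\lambda)\bigr)^2$ for non-degenerate $\rho$. The one difference is that you treat $t=0$ separately via the asymptotic expansion and identify $R_\rho'(0)=\Var(\rho)>0$, whereas the paper simply writes ``which implies the claim'' after handling $\hat{D}_\rho\setminus\{0\}$; your version is the more careful of the two on this point.
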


\begin{proof}
    Recall from Lemma \ref{lem:real_R_trans_extend} that $R_\rho(t) = \hat{R}_\rho(t)$ for $t \in \hat{D}_\rho \setminus \{0\}$.     
    Thus by \eqref{eq:def_real_R_trans_hat}
    \be
        R_\rho'(t)
        =
        -G_\rho^{\inv\prime}(-t)
        +
        t^{-2}
        \quad\text{for all}\quad
        t \in \hat{D}_\rho \setminus \{0\}.
    \ee
    By the inverse function rule
    \begin{equation}\label{eq:Ginv_func_rule}
        G_\rho^{\inv\prime}(g) = \frac{1}{G_\rho'(G_\rho^{\inv}(g))}
        \quad\text{for}\quad
        g \in (G_\rho(\supp_{+}\rho),0) \cup (0,G_\rho(\supp_{-}\rho)).
    \end{equation}
    It follows from the above and \eqref{eq:Gmu_k-th_deriv} that
    \be\label{eq:derR}
        R_\rho'(t)
        =
        -\(
            \int
            \frac{ \rho(d\l) }{ ( \l - G_\rho^{\inv}(-t) )^2 }
        \)^{-1}
        +
        t^{-2}
        \quad\text{for all}\quad
        t \in \hat{D}_\rho \setminus \{0\}.
    \ee    
    The expression \eqref{eq:derR} is positive if and only if
    \be\label{eq:after_reorder}
        \int
        \frac{ \rho(d\l) }{ (\l-G_\rho^{\inv}(-t))^2 }
        -
        t^2>0\,. 
    \ee
    Under the change of variables $z = G_\rho^{\inv}(-t) \iff G_\rho(z) = -t $ for $ z \in (\overline{\supp}\, \rho)^c$ (which is valid by \eqref{eq:Ginv_maps_bijecitvely}), the expression \eqref{eq:after_reorder} turns into
    \be\label{eq:afterchange}
        \int
        \frac{ \rho(d\l) }{ (\l-z)^2 }
        -
        \(
            \int\frac{ \rho(d\l) }{ \l - z }
        \)^2\,.
    \ee
    Finally \eqref{eq:afterchange} is non-negative for any $\rho$, with equality only if $\rho$ is degenerate. This proves that $R_\rho'(t)>0$ for all $t \in \hat{D}_\rho \setminus \{0\}$, which implies the claim.
\end{proof}


\begin{figure}
    \begin{minipage}{\textwidth}
        \begin{tikzpicture}
            \begin{scope}
                \begin{axis}[
                    legend style={at={(0.5,-0.2)}, anchor=north, legend columns=1, legend cell align=left},
                    alignedplot,fourcolwidth,
                    xlabel={$g$},
                    xmin=0, xmax=3,
                    ymin=-4, ymax=-1.5,
                    title={$\boldsymbol{\rho=\mu_{{\rm sc}}}$\\$g^*_\rho<\infty$, $g^{*\prime}_\rho=\infty$,\\
                    $g^*_\rho \in -D_\rho$, \ref{item:1[lem:Ginv_props]},\ref{item:2[lem:Ginv_props]}},
                    title style={align=center},
                    clip=true
                ]
                
                \addplot[densely dashdotted,black]  coordinates {(0, -2) (3,- 2)};
                
                \addplot[densely dotted, black] coordinates {(1, -5) (1,3)};
                
                \addplot[domain=0.01:1, samples=300,thick, black, dash phase=3pt] ({x}, {-x - 1/x});
                    
                \addplot[domain=1:3,samples=300,dashed, thick, black, dash phase=3pt] ({x}, {-x - 1/x});
                
                \end{axis}
            \end{scope}
            \begin{scope}[xshift=4cm]
                \begin{axis}[ legend style={at={(0.5,-0.2)}, anchor=north, legend columns=1, legend cell align=left},
                    alignedplot,fourcolwidth,
                    xlabel={$g$},
                    xmin=0, xmax=8,
                    ymin=-3, ymax=-1.5,
                    title={$\boldsymbol{R_\rho(t) = t + \frac{t^2}{10}}$\\$g^*_\rho<\infty$, $g^{*\prime}_\rho=\infty$,\\
                    $g^*_\rho \in -D_\rho$, cf. \ref{item:1[lem:Ginv_props]},\ref{item:2[lem:Ginv_props]}.},
                    title style={align=center},
                    clip=true
                ]
                \addplot[densely dashdotted,black]  coordinates {(-1, -1.88723) (10,-1.88723)};
                
                \addplot[densely dotted, black] coordinates {(1.13781, -5) (1.13781,3)};
                
                \addplot[domain=0.01:1.13781, samples=300,thick, black] 
                    ({x}, { -x + x*x/10  - 1/x });
                    
                \addplot[domain=1.13781:10, samples=300,dashed, thick, black, dash phase=3pt] 
                    ({x}, {-x + x*x/10  - 1/x });
                
                \end{axis}
            \end{scope}
            \begin{scope}[xshift=8cm]
                \begin{axis}[ legend style={at={(0.5,-0.2)}, anchor=north, legend columns=1, legend cell align=left},
                    alignedplot,fourcolwidth,
                    xlabel={$g$},
                     xmin=0, xmax=10,
                    ymin=-2, ymax=-0.8,
                    title={$\boldsymbol{\rho=\frac{1}{2}\delta_{-1}+\frac{1}{2}\delta_1}$\\ $g^*_\rho=g^{*\prime}_\rho=\infty$,\\
                    $g^*_\rho \notin -D_\rho$, cf. \ref{item:1[lem:Ginv_props]},\ref{item:2[lem:Ginv_props]},\ref{item:4[lem:Ginv_props]}.},
                    title style={align=center},
                    clip=true
                ]
                    \addplot[densely dashdotted,black] coordinates {(0, -1) (10, -1)};
                    
                    \addplot[domain=0.01:10, samples=300,thick, black]
                         ({x}, {(-1 - sqrt( 1 + 4*x*x ))/(2*x)});
                \end{axis}
            \end{scope}
            \begin{scope}[xshift=12cm]
                \begin{axis}[ legend style={at={(0.5,-0.2)}, anchor=north, legend columns=1, legend cell align=left},
                    alignedplot,fourcolwidth,
                    xlabel={$h$},
                     xmin=0.35, xmax=0.72,
                    ymin=-2, ymax=-0.8,
                    title = {$\boldsymbol{\rho(dx)=1_{[-1,1]}\frac{(1+x)^3}{4}dx}$\\$g^*_\rho<\infty$, $g^{*\prime}_\rho<\infty$,\\ $g^*_\rho \notin -D_\rho$ cf. \ref{item:1[lem:Ginv_props]},\ref{item:5[lem:Ginv_props]}.},
                    title style={align=center},
                    clip=true
                ]
                    \addplot[densely dotted, black] coordinates {(0.666667 , -5) (0.666667, 2)};
                    
                    \addplot[densely dashdotted,black] coordinates {(-1, -1) (5, -1)};
                    
                    \addplot[ domain=-30:-1.001, samples=300, thick, black] 
                    ( {5./3. + (3./2.)*x + (1./2.)*x*x+ (1./4.)*(1 + x)*(1 + x)*(1 + x)*ln((1 -x)/(-1 - x)) },{x});
                    
                    \addplot[mark=*, mark options={fill=white, scale=0.6}] coordinates {({0.666667},{-1})};
                \end{axis}
            \end{scope}
            \begin{scope}[xshift=9.5cm,yshift=-1.1cm]
                \begin{axis}[
                    hide axis,
                    scale only axis,
                    height=0cm,
                    width=0cm,
                    xmin=0, xmax=1,
                    ymin=0, ymax=1,
                    legend columns=4,
                ]
                    \addlegendimage{thick}
                    \addlegendentry{$G_\rho^{\inv}$}
                    \addlegendimage{dashed,thick}
                    \addlegendentry{$G_\mu^{[-1]}$}
                    \addlegendimage{densely dotted,black}
                    \addlegendentry{$\supp_{-}\rho$}
                    \addlegendimage{densely dashdotted,black}
                    \addlegendentry{$g^*_\rho$}
                \end{axis}
            \end{scope}
        \end{tikzpicture}
    \end{minipage}
    \caption{Examples of different possibilities for the ``shapes'' of $G_\rho^{\inv}$ and $G_\rho^{[-1]}$, cf. Lemma \ref{lem:Ginv_props}.}
    \label{fig:Ginv}
\end{figure}
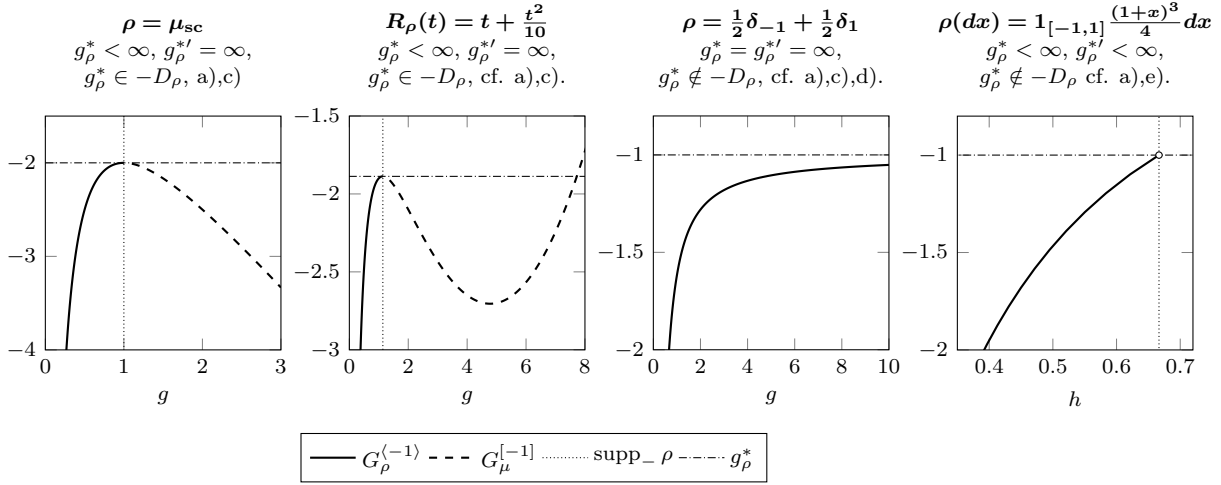

Recalling \eqref{eq:D_mu_plus_def}, note that the previous lemma implies that
\begin{equation}\label{eq:Dhatmu_subset_D+mu}
    \hat{D}_\mu  \subset  D^+_\mu
    \text{ for any non-degenerate compactly supported }
    \mu.    
\end{equation}    

The following lemma is immediate from the definitions \eqref{eq:log-P} of $U_\rho$ and \eqref{eq:defSti} of $G_\rho$.
\begin{lemma}\label{lem:U_rho_deriv_and_asymp}
    For any probability measure $\rho$ on $\mathbb{R}$ s.t. $\supp_{-}\rho \ne -\infty$
    \begin{equation}
        U_\rho'(z)=-G_\rho(z)
        \quad\text{for}\quad z<\supp_{-}\rho,
        \quad\text{and}\quad
        \lim_{z\to-\infty}\{U_\rho(z)-\log|z|\}=0.
    \end{equation}
\end{lemma}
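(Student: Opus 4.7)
The lemma is essentially an exercise in differentiating/passing to the limit under the integral sign, so the plan is straightforward.

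For the derivative identity, observe that whenever $z<\supp_{-}\rho$ we have $\lambda-z>0$ for $\rho$-a.e.\ $\lambda$, so $|z-\lambda|=\lambda-z$ and
\[
U_\rho(z)=\int\log(\lambda-z)\,\rho(d\lambda).
\]
I would then differentiate under the integral. Fixing $z_0<\supp_{-}\rho$ and choosing $\delta>0$ with $z_0+\delta<\supp_{-}\rho$, for all $z\in(z_0-\delta,z_0+\delta)$ and all $\lambda$ in $\supp\rho$ the derivative $\partial_z\log(\lambda-z)=-1/(\lambda-z)$ is bounded in absolute value by $1/(\supp_{-}\rho-z_0-\delta)$, which is a constant and hence integrable against the probability measure $\rho$. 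Applying dominated convergence to the difference quotients yields
\[
U_\rho'(z_0)=-\int\frac{\rho(d\lambda)}{\lambda-z_0}=-G_\rho(z_0),
\]
by the definition \eqref{eq:defSti} of $G_\rho$.

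For the asymptotic, I would rewrite the quantity of interest, for $z<0$, as
\[
U_\rho(z)-\log|z|=\int\log\frac{|\lambda-z|}{|z|}\,\rho(d\lambda)=\int\log\Bigl|1-\tfrac{\lambda}{z}\Bigr|\,\rho(d\lambda).
\]
For every fixed $\lambda$, the integrand tends to $\log 1=0$ as $z\to-\infty$. To pass the limit through the integral, I would use a dominated-convergence argument: since $\rho$ has compact support (which is the setting in which the lemma is applied), for $|z|$ larger than twice the maximum of $|\supp_-\rho|$ and $|\supp_+\rho|$ we have $|\lambda/z|\leq 1/2$ uniformly in $\lambda\in\supp\rho$, so $\bigl|\log|1-\lambda/z|\bigr|$ is uniformly bounded (say by $\log 2$). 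Hence the integral tends to $0$, giving the claim.

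No real obstacle is expected; the only things to watch are (i) making sure the dominating bound in the derivative step does not depend on $z$ in a full neighborhood of $z_0$, so that the DCT hypothesis for difference quotients is truly met, and (ii) invoking compact support of $\rho$ to produce the uniform bound in the asymptotic step.
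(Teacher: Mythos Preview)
Your proposal is correct and is precisely the standard justification the paper has in mind: the paper gives no argument beyond declaring the lemma ``immediate from the definitions \eqref{eq:log-P} of $U_\rho$ and \eqref{eq:defSti} of $G_\rho$'', and your dominated-convergence verification is exactly how one unpacks that. Your remark that compact support is used for the asymptotic is apt; the paper works throughout with compactly supported measures, so this is harmless.
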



\section{Characterization of left endpoint of support (Proof of Theorem \ref{thm:add_conv_supp_end_point})}
\label{sect:end_point_proof}

In this section we prove Theorem \ref{thm:add_conv_supp_end_point}, our characterization of the left endpoint $z^*_{\mu \boxplus \nu}=\supp_{-}\mu\boxplus\nu$ and ${g^*_{\mu \boxplus \nu}=G_{\mu \boxplus \nu}(z^*_{\mu \boxplus \nu})}$. It follows from mostly elementary properties of the inverse $G_{\mu\boxplus\nu}^{\inv}$. The latter is related to $R_{\mu \boxplus \nu}$ via \eqref{eq:def_real_R_trans_hat}, and thus to $R_\mu$ and $R_\nu$ via \eqref{eq:add_conv_R_trans_real}.

The real $R$-transform $R_\mu$ gives rise to a real analytic extension of $G_\mu^{\inv}$, which we denote by $G_\mu^{[-1]}$ and now define. For any compactly supported probability measure $\rho$ on $\mathbb{R}$, define
\begin{equation}\label{eq:Grho_inv_ext_def}
    G_\rho^{[-1]}:-D_\rho\setminus\{0\}\to\mathbb{R}
    \quad\quad\text{by}\quad\quad
    G_\rho^{[-1]}(u)=R_\rho(-u)-u^{-1}.
\end{equation}
Then by \eqref{eq:def_real_R_trans_hat}
\begin{equation}\label{eq:Ginv_ext_restricted}
    G_\rho^{[-1]}|_{-\hat{D}_\rho\setminus\{0\}}
    =
    G_\rho^{\inv},
\end{equation}
so indeed $G_\rho^{[-1]}$ is a real analytic extension of $G_\rho^{\inv}$ from $-\hat{D}_\rho\setminus\{0\}$
to $-D_\rho\setminus\{0\}$ (recall Lemma \ref{lem:real_R_trans_extend}).

Let us furthermore abbreviate
\begin{equation}\label{eq:G_rho_at_endpoint_abbrev}
    g^*_\rho := G_\rho( \supp_{-} \rho ) \in (0,\infty],
    \quad\quad\text{ and }\quad\quad
    g^{*\prime}_\rho := G_\rho'( \supp_{-} \rho ) \in (0,\infty],
\end{equation}
extending the notation \eqref{eq:zstar_gstar_def}.

The next lemma describes how the ``shape'' of $G_\rho^{\inv}$ and $G_\rho^{[-1]}$ depends on the values of $g^*_\rho$ and $g^{*\prime}_\rho$. The result is best understood geometrically, noting that if $(x,f(x)),x\in[a,b]$ is the graph of a real function, then the graph of its inverse is $(f(x),x),x\in[a,b]$. Thus for instance if $f'(b) = \infty$, it is clear that the inverse function $f^{\inv}(y)$ must have vanishing derivative at the point $y=f(b)$ of its domain. This is the geometric meaning of Lemma \ref{lem:Ginv_props} \ref{item:2[lem:Ginv_props]}. In fact Lemma \ref{lem:Ginv_props} \ref{item:1[lem:Ginv_props]}, \ref{item:2[lem:Ginv_props]}, \ref{item:4[lem:Ginv_props]} all have similarly simple geometric interpretations. See Figure \ref{fig:Ginv} for some illustrative examples.

\begin{lemma}\label{lem:Ginv_props}
    For any compactly supported probability measure $\rho$ on $\mathbb{R}$ the following holds.

    \begin{enumerate}[label=\alph*)]
        \item\label{item:1[lem:Ginv_props]} For all $g \in (0,g^*_\rho)$ it holds that $G_\rho^{\inv\prime}(g) > 0$.
        
        \item\label{item:well_def_at_gstar[lem:Ginv_props]}$G_\rho^{\inv\prime}(g^*_\rho)$ and $G_\rho^{[-1]\prime}(g^*_\rho)$ are well-defined (in the sense of \eqref{eq:f_well_defined_at_endpoints}).
        \item\label{item:2[lem:Ginv_props]} If $g^{*\prime}_\rho=\infty$, then $G_\rho^{\inv\prime}(g^*_\rho) = G_\rho^{[-1]\prime}(g^*_\rho) = 0$ (including if $g^*_\rho=\infty$).
        \item\label{item:4[lem:Ginv_props]}
        If $g^*_\rho = 
        \infty$, then $g^{*\prime}_\rho = \infty$ and 
        $(0,\infty)\cap -D_\rho = (0,\infty)\cap -\hat{D}_\rho = (0,\infty)$.
        \item\label{item:5[lem:Ginv_props]} If $g^{*\prime}_\rho<\infty$, then $g^*_\rho<\infty$ and
        $(0,\infty)\cap -D_\rho = (0,\infty)\cap -\hat{D}_\rho = (0,g^*_\rho)$.

    \end{enumerate}
\end{lemma}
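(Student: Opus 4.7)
The plan is to handle the five parts in the non-circular order (a), (d), (b)/(c), (e), with (a), (d), (b), (c) following quickly from the inverse function rule and the moment-growth observation \eqref{eq:G^(k)_mu_deriv_inf_implies_higher_derivs_inf}, and (e) carrying the only real analytic content. First, I apply the inverse function rule $G_\rho^{\inv\prime}(g) = 1/G_\rho'(G_\rho^{\inv}(g))$ on $(0, g_\rho^*)$: by \eqref{eq:Ginv_maps_bijecitvely} the argument lies in $(-\infty, \supp_-\rho)$, where $G_\rho'$ is strictly positive by Lemma \ref{lem:Gmu-kth-deriv}; this gives (a). Part (d) then follows at once, since $g_\rho^* = \infty$ forces $g_\rho^{*\prime} = \infty$ via \eqref{eq:G^(k)_mu_deriv_inf_implies_higher_derivs_inf} (with $k=0$), and the domain identities are immediate from \eqref{eq:D_hat_def} and the inclusion $\hat{D}_\rho \subset D_\rho$.

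For (b) and (c), the inverse function rule combined with \eqref{eq:Ginv_limit} and continuity of $G_\rho'$ up to $\supp_-\rho$ yields $\lim_{g \uparrow g_\rho^*} G_\rho^{\inv\prime}(g) = 1/g_\rho^{*\prime}$ (with the convention $1/\infty := 0$), which is the well-definedness of $G_\rho^{\inv\prime}(g_\rho^*)$ in (b) and, specialised to $g_\rho^{*\prime}=\infty$, gives the value $0$ in (c). For $G_\rho^{[-1]\prime}(g_\rho^*)$ I split on whether $g_\rho^* \in -D_\rho$: if yes, the derivative exists by analyticity of $G_\rho^{[-1]}$, and two-sided continuity forces agreement with the left-sided limit $1/g_\rho^{*\prime}$; if no, the only accessible direction is from the left, and the limit coincides with that for $G_\rho^{\inv\prime}$ by the identity \eqref{eq:Ginv_ext_restricted}. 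In both cases the corresponding value in (c) is read off.

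Part (e) is the main obstacle. The first assertion $g_\rho^{*\prime} < \infty \Rightarrow g_\rho^* < \infty$ is the contrapositive of (d), and since $(0, g_\rho^*) \subset -\hat{D}_\rho \subset -D_\rho$ automatically, all that remains is to show $g_\rho^* \notin -D_\rho$. I argue by contradiction: assume $-g_\rho^* \in D_\rho$. Then $G_\rho^{[-1]}(u) = R_\rho(-u) - u^{-1}$ is real analytic on an open interval around $g_\rho^*$; by continuity, \eqref{eq:Ginv_limit} and \eqref{eq:Ginv_ext_restricted} it satisfies $G_\rho^{[-1]}(g_\rho^*) = \supp_-\rho$, and by part (b) its derivative there equals $1/g_\rho^{*\prime}$, which is a non-zero finite real number under the hypothesis. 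The analytic inverse function theorem therefore produces an analytic local inverse of $G_\rho^{[-1]}$ near $g_\rho^*$ mapping a neighbourhood of $\supp_-\rho$ into one of $g_\rho^*$; since $G_\rho^{[-1]} = G_\rho^{\inv}$ on $(0, g_\rho^*)$ by \eqref{eq:Ginv_ext_restricted}, this inverse agrees with $G_\rho$ on a left neighbourhood of $\supp_-\rho$ and so extends $G_\rho$ analytically across $\supp_-\rho$. This contradicts Lemma \ref{lem:Gnotanal}, whose non-degeneracy assumption is met because $g_\rho^{*\prime} < \infty$ excludes $\rho = \delta_{x_0}$ (for which $g_\rho^{*\prime} = \infty$). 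The delicate point to check carefully will be that the inverse function theorem really applies at $g_\rho^*$, i.e. that the left-limit computation of part (b) genuinely gives the value of the analytic derivative $G_\rho^{[-1]\prime}(g_\rho^*)$ in the case $g_\rho^* \in -D_\rho$.
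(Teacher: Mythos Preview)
Your proposal is correct and follows essentially the same route as the paper's proof: (a) via the inverse function rule and positivity of $G_\rho'$, (d) via \eqref{eq:G^(k)_mu_deriv_inf_implies_higher_derivs_inf} and the domain inclusions, (b)/(c) via the limit $\lim_{g\uparrow g_\rho^*} G_\rho^{\inv\prime}(g)=1/g_\rho^{*\prime}$ from the inverse function rule, and (e) by contradiction using the analytic inverse function theorem to manufacture an analytic extension of $G_\rho$ across $\supp_-\rho$, contradicting Lemma~\ref{lem:Gnotanal}. Your explicit check that $g_\rho^{*\prime}<\infty$ forces $\rho$ non-degenerate (needed to invoke Lemma~\ref{lem:Gnotanal}) is a detail the paper leaves implicit.
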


\begin{proof}
    {\bf \ref{item:1[lem:Ginv_props]}} This follows from the inverse function rule \eqref{eq:Ginv_func_rule}, and the case $k=1$ of \eqref{eq:mu_k-th_deriv_pos_inc}.
    \,\newline

    {\bf \ref{item:well_def_at_gstar[lem:Ginv_props]}} If $g^*_\rho \in -D_\rho$ then $G_\rho^{[-1]\prime}(g^*_\rho)$ is well-defined in the usual sense. Otherwise, it follows from  \ref{item:1[lem:Ginv_props]} that both $G_\rho^{[-1]\prime}(g^*_\rho)$ and $G_\rho^{\inv\prime}(g^*_\rho)$ are well-defined in the sense of \eqref{eq:f_well_defined_at_endpoints}.
    \,\newline
    
    {\bf \ref{item:2[lem:Ginv_props]}}
    Note that    \begin{equation}\label{eq:using_inv_func_rule}
        G_\rho^{\inv}(g^*_\rho)
        \overset{\eqref{eq:f_well_defined_at_endpoints}}{=}
        \lim_{g \uparrow g^*_\rho} G^{\inv\prime}(g)
        \overset{\eqref{eq:Ginv_func_rule}}{=}
        \lim_{g \uparrow g_\rho^*}
        \frac{1}{G_\rho'(G_\rho^{\inv}(g))}
        \overset{\eqref{eq:Ginv_limit}}{=}
        \lim_{z \uparrow \supp_{-}\rho}
        \frac{1}{G_\rho'(z)},       
    \end{equation}
    and similarly for $G_\mu^{[-1]}(g^*_\rho)$. Since $G_\rho'(z) \to g^{*\prime}_\rho$ as $z \uparrow \supp_-\rho$, it follows that \eqref{eq:using_inv_func_rule} equals zero if $g^{*\prime}_\rho = \infty$.
    \,\newline

    {\bf \ref{item:4[lem:Ginv_props]}} Assume $g^*_\rho=\infty$. Then $g^{*\prime}_\rho=\infty$ by \eqref{eq:G^(k)_mu_deriv_inf_implies_higher_derivs_inf}. Furthermore $(-\infty,0) \subset \hat{D}_\rho$ by \eqref{eq:D_hat_def}, which implies that also $(-\infty,0) \subset D_\rho$ by \eqref{[lem:real_R_trans_extend]eq:D_mu}, and proves the second claim.
    \,\newline
    
    {\bf \ref{item:5[lem:Ginv_props]}}
    Assume $g^{*\prime}_\rho<\infty$. Then $g^*_\rho<\infty$ by \eqref{eq:G^(k)_mu_deriv_inf_implies_higher_derivs_inf}.
    The second claim follows if we prove that $\sup(-D_\rho) = g^*_\rho$. To this end, assume for contradiction that $\sup(-D_\rho)>g^*_\rho$. Then $G_\rho^{[-1]}$ is an analytic extension of $G_\rho^{\inv}$ to an interval that contains a neighbourhood of $g^*_\rho$. It furthermore satisfies
    \begin{equation}
        G_\rho^{[-1]}(g^*_\rho)
        \overset{\eqref{eq:Ginv_ext_restricted}}{=}
        \lim_{g \uparrow g_\rho^*} G^{\inv}{}(g)
        \overset{\eqref{eq:Ginv_limit}}{=}
        \supp_{-}\rho
        \quad\text{and}\quad
        G_\rho^{[-1]\prime}(g^*_\rho)
        \overset{\eqref{eq:Ginv_ext_restricted}}{=}
        \lim_{g \uparrow g_\rho^*} G^{\inv\prime}(g)
        \overset{\eqref{eq:using_inv_func_rule}}{=}
        \frac{1}{g^{*\prime}_\rho} \ne 0.
    \end{equation}
    Thus, by the analytic inverse function theorem, $G_\rho^{[-1]}$
    has an analytic inverse $H$ in a neighbourhood of $g^*_\rho$
    s.t. $H(\supp_{-}\rho)=g^*_\rho$. Then for a small enough $\varepsilon>0$,
    $H|_{(\supp_{-}\rho-\varepsilon,\supp_{-}\rho+\varepsilon)}$ is an
    analytic extension of $G_\rho|_{(\supp_{-}\rho-\varepsilon,\supp_{-}\rho)}$. But this contradicts Lemma \ref{lem:Gnotanal}. Thus $\sup(-D_\rho)=g^*_\rho$.
\end{proof}

Later when we prove Proposition \ref{prop:Fmunu_props} \ref{item:BES_conds[prop:Fmunu_props]}, we will need the following result about the shape of $G_\mu^{[-1]}$ when $\mu$ has the left edge behaviour of the semicircle law.

\begin{lemma}\label{lem:Ginv_sc_edge}\,
    \begin{enumerate}[label=\alph*)]
        \item \label{item:sc_Ginv_loc_max[lem:Ginv_sc_edge]} Let $\beta>0$ and $\mu = \musc[\b]$. It holds that $g_{\musc[\beta]}^* = \beta$ and $g_{\musc[\beta]}^{*\prime} = \infty$. Furthermore the domain of $G_{\musc[\beta]}^{[-1]}$ is $\mathbb{R} \setminus \{0\}$, and its unique critical points in the domain are $\pm g_{\musc[\beta]}^* = \pm \beta$, for which $G_{\musc[\beta]}^{[-1]\prime\prime}(\pm \beta) = \mp2\beta^{-3}$. In particular $g = \beta$ is a local maximum of $G_{\musc[\beta]}^{[-1]}$.
    
        \item\label{item:sc_edge_behaviour[lem:Ginv_sc_edge]} Let $\rho$ be a probability s.t. $z_\rho^* := \supp_- \rho \ne -\infty$, and assume that there exist constants $c_1,c_2>0$ s.t.
        \begin{equation}\label{eq:edge_cond[lem:Ginv_sc_edge]}
            c_1^{-1}\varepsilon^{3/2}
            \le
            \rho(z_\rho^*,z_\rho^*+\varepsilon)
            \le
            c_1\varepsilon^{3/2}
            \text{ for }
            \varepsilon\in(0,c_2).
        \end{equation}
         Then (i) $g_\rho^*<\infty=g_\rho^{*\prime}$ and (ii) $G_\rho^{[-1]\prime\prime}(g_\rho^*)<0=G_\rho^{[-1]\prime}(g_\rho^*)$. In particular $g_\rho^*$ is a local maximum of $G_\rho^{[-1]}$.
    \end{enumerate}
\end{lemma}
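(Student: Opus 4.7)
Part (a) is a direct computation from \eqref{eq:R_trans_sc_special_case}. Inserting $R_{\musc[\beta]}(t)=\beta^2 t$ and $D_{\musc[\beta]}=\mathbb{R}$ into \eqref{eq:Grho_inv_ext_def} gives $G^{[-1]}_{\musc[\beta]}(u)=-\beta^2 u-u^{-1}$ on $\mathbb{R}\setminus\{0\}$, and differentiating twice yields $G^{[-1]\prime}_{\musc[\beta]}(u)=-\beta^2+u^{-2}$ together with $G^{[-1]\prime\prime}_{\musc[\beta]}(u)=-2u^{-3}$, from which the critical points, the corresponding second-derivative values, and local maximality all follow by inspection. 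The identifications $g^*_{\musc[\beta]}=G_{\musc[\beta]}(-2\beta)$ and $g^{*\prime}_{\musc[\beta]}=\infty$ come from \eqref{eq:G_sc} and its derivative, the latter diverging due to the square-root singularity at $z=-2\beta$.

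For (b), write $z^*:=\supp_-\rho$ and $F(\varepsilon):=\rho([z^*,z^*+\varepsilon])$, so that \eqref{eq:edge_cond[lem:Ginv_sc_edge]} reads $c_1^{-1}\varepsilon^{3/2}\le F(\varepsilon)\le c_1\varepsilon^{3/2}$ for $\varepsilon\in(0,c_2)$. A Fubini / integration-by-parts computation in the defining integrals of $G_\rho(z^*)$ and $G'_\rho(z^*)$ yields
\begin{equation*}
    g^*_\rho = \int_0^L \frac{F(\eta)}{\eta^2}\,d\eta + \frac{F(L)}{L},\qquad g^{*\prime}_\rho = 2\int_0^L \frac{F(\eta)}{\eta^3}\,d\eta + \frac{F(L)}{L^2},
\end{equation*}
with $L$ chosen so that $\supp\rho\subset[z^*,z^*+L]$. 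The upper bound on $F$ renders the first integral finite (via $\int\eta^{-1/2}d\eta<\infty$), proving $g^*_\rho<\infty$; the lower bound renders the second one divergent (via $\int\eta^{-3/2}d\eta=\infty$), proving $g^{*\prime}_\rho=\infty$. The identity $G^{[-1]\prime}_\rho(g^*_\rho)=0$ is then immediate from Lemma \ref{lem:Ginv_props}\ref{item:2[lem:Ginv_props]}.

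The main step is $G^{[-1]\prime\prime}_\rho(g^*_\rho)<0$. On $(0,g^*_\rho)$ one has $G^{[-1]}_\rho=G^{\inv}_\rho$, and two applications of the inverse function rule give $G^{[-1]\prime\prime}_\rho(u)=-G''_\rho(z)/(G'_\rho(z))^3$ with $z=G^{[-1]}_\rho(u)\uparrow z^*$ as $u\uparrow g^*_\rho$. Applying the same Fubini / integration-by-parts strategy to $G'_\rho(z^*-\delta)$ and $G''_\rho(z^*-\delta)$, now using both the upper and lower edge bounds on $F$, gives the two-sided envelopes $G'_\rho(z^*-\delta)\asymp\delta^{-1/2}$ and $G''_\rho(z^*-\delta)\asymp\delta^{-3/2}$ for small $\delta>0$. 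Hence $G''_\rho(z)/(G'_\rho(z))^3$ stays in a bounded positive interval as $z\uparrow z^*$, so $G^{[-1]\prime\prime}_\rho(u)$ is bounded above by a strictly negative constant on a left-neighborhood of $g^*_\rho$. Combined with $G^{[-1]\prime}_\rho(g^*_\rho)=0$, this forces $G^{[-1]\prime}_\rho$ to be strictly positive just to the left of $g^*_\rho$, proving local maximality from the left; continuity extends this to a full neighborhood whenever $g^*_\rho$ lies in the interior of $-D_\rho$, and the rightward extension is vacuous otherwise, since $g^*_\rho$ is then a right endpoint of the positive component of $-D_\rho$.

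The main obstacle is that the order-bounds in \eqref{eq:edge_cond[lem:Ginv_sc_edge]} do not determine a sharp limit of $G^{[-1]\prime\prime}_\rho(u)$ as $u\uparrow g^*_\rho$; the claim $G^{[-1]\prime\prime}_\rho(g^*_\rho)<0$ must accordingly be read (via \eqref{eq:f_well_defined_at_endpoints}) as asserting that the limit lies in $[-\infty,0)$. The plan is to show that the uniform negative upper bound on $G^{[-1]\prime\prime}_\rho$ in a left-neighborhood of $g^*_\rho$ already suffices both for this weakened form of the inequality and for the local-maximum conclusion, bypassing the need for a more precise asymptotic.
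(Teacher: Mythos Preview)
Your proof is correct and follows essentially the same skeleton as the paper's: part (a) by direct computation from $G^{[-1]}_{\musc[\beta]}(u)=-\beta^2 u-u^{-1}$, and part (b) via the inverse-function identity $G^{[-1]\prime\prime}_\rho(g)=-G''_\rho(z)/(G'_\rho(z))^3$ with $z=G^{\inv}_\rho(g)$.

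Where you diverge is in how you extract the edge asymptotics. You integrate by parts against the distribution function $F(\varepsilon)=\rho([z^*,z^*+\varepsilon])$ and use the two-sided bound $F(\varepsilon)\asymp\varepsilon^{3/2}$ to get $G'_\rho(z^*-\delta)\asymp\delta^{-1/2}$ and $G''_\rho(z^*-\delta)\asymp\delta^{-3/2}$ directly. The paper instead observes that the semicircle law satisfies the same edge hypothesis \eqref{eq:edge_cond[lem:Ginv_sc_edge]}, so for each $k$ the derivatives $G^{(k)}_\rho(z^*_\rho-\delta)$ and $G^{(k)}_{\musc}(-2-\delta)$ are sandwiched by one another up to multiplicative constants; the conclusion then falls out of part (a), where the semicircle quantities are explicit. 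Your route is more self-contained; the paper's comparison is a bit cleaner because it avoids the explicit scaling computation and recycles part (a).

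Your final paragraph about the limit $\lim_{u\uparrow g^*_\rho}G^{[-1]\prime\prime}_\rho(u)$ is a fair point. If $g^*_\rho$ lies in the interior of $-D_\rho$ (as for the semicircle), continuity gives the limit for free and the literal inequality holds. If $g^*_\rho$ is a right endpoint, your argument still yields $\limsup<0$ and $\liminf>-\infty$, which is enough for the local-maximum conclusion and is all that is ever used downstream (Proposition~\ref{prop:Fmunu_props}~\ref{item:BES_conds[prop:Fmunu_props]}). The paper handles this by asserting that $G^{\inv\prime\prime}_\rho$ is monotone, citing \eqref{eq:mu_k-th_deriv_pos_inc}; that monotonicity is not obviously a consequence of \eqref{eq:mu_k-th_deriv_pos_inc} alone, so your more cautious treatment is arguably preferable here.
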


\begin{remark}
    The left edge behaviour assumed in \ref{item:sc_edge_behaviour[lem:Ginv_sc_edge]} is that of the semicircle law. For measures with a density $f(x)$, it corresponds to $f(x) \propto (x - z^*_\rho)^{\frac{1}{2}}$ as $x \downarrow z^*_\rho$. It is a common situation for additive convolutions, see Remark \ref{rem:BES}.
\end{remark}

\begin{proof}
    {\bf \ref{item:sc_Ginv_loc_max[lem:Ginv_sc_edge]}} This follows from explicit computation, since $G_{\musc[\beta]}^{[-1]}(g) = - \beta^2 g - g^{-1}$ by \eqref{eq:Grho_inv_ext_def} and \eqref{eq:R_trans_sc_special_case}.
    \,\newline
    
    {\bf \ref{item:sc_edge_behaviour[lem:Ginv_sc_edge]}} Write $\musc=\musc[1]$ for the semicircle law supported on $[-2,2]$. Because of the decay of its density \eqref{eq:sc_def} as $z \downarrow -2$, $\musc$ satisfies the condition \eqref{eq:edge_cond[lem:Ginv_sc_edge]} for some $c_1,c_2$. Since $\rho$ is also assumed to satisfy it, there exist $c_3,c_4>0$ s.t.
    \begin{equation}
        c_3^{-1}\musc(-2,-2+\varepsilon)
        \le
        \rho(z_\rho^*,z_\rho^*+\varepsilon)
        \le
        c_3\musc(-2,-2+\varepsilon)
        \quad\text{ for }\quad
        \varepsilon \in (0,c_4).
    \end{equation}
    This implies that for any monotone decreasing function $f(x)\ge0$ and $\varepsilon \in (0,c_4)$
    \begin{equation}
        c_3^{-1} \int_{-2}^{-2+\varepsilon} f(x - (-2)) \musc(dx)
        \le
        \int_{z_\rho^*}^{z_\rho^*+\varepsilon} f( x - z^*_\rho ) \rho( dx)
        \le
        c_3 \int_{-2}^{-2+\varepsilon} f(x - (-2)) \musc(dx).
    \end{equation}
    Using this with $f(y) = (y+\delta)^{-k}$ for arbitrary $\delta>0,k\ge1$, and arbitrary $\varepsilon \in (0,c_4)$, we obtain that
    \begin{equation}\label{eq:G^(k)_mu_edge_sandwich}
        c_3^{-1}G_{\musc}^{(k)}( -2 - \delta ) - c_5
        \le
        G_\rho^{(k)}( z_\rho^* - \delta )
        \le
        c_3 G_{\musc}^{(k)}( -2 - \delta ) + c_5
        \quad\text{ for }\quad
        \delta>0,
    \end{equation}
    for a constant $c_5>0$ depending only on $\varepsilon$ and $k$.
    Taking the limit $\delta\downarrow0$ for $k=0,1$ and using that $g_{\musc}^*<\infty=g_{\musc}^{*\prime}$, we deduce that $g_\rho^*<\infty=g_\rho^{*\prime}$, proving (i).    
    
    Next, $G_\rho^{\inv \prime\prime}$ is monotone (recall \eqref{eq:mu_k-th_deriv_pos_inc}), so
    \begin{equation}
        G_\rho^{[-1]\prime\prime}( g^*_\rho )
        =
        \lim_{g \uparrow g^*_\rho}G_\rho^{\inv \prime\prime}(g).
    \end{equation}   
    By the inverse function rule
    \begin{equation}
        G_\rho^{\inv \prime\prime}(g)
        =
        -\frac{G_\rho''(G_\rho^{\inv}(g))}{G_\rho'(G_\rho^{\inv}(g))^{3}}
        \quad
        \text{ for }
        g < g^*_\rho.
    \end{equation}
    Thus
    \begin{equation}\label{eq:G_inv''_at_edge_from_inverse_func_rule}
        G_\rho^{[-1]\prime\prime}(g_\rho^*)
        =
        \lim_{\delta \downarrow 0 }
        \left(
            -\frac{
                G_\rho''(z^*_\rho - \delta)
            }{
                G_\rho'(z^*_\rho - \delta)^{3}
            }
        \right).
    \end{equation}
    Using that $g_{\musc}^{*\prime} = G_{\musc}''(z^*_\rho) = \infty$ (recall \eqref{eq:G_deriv_limits}) and \eqref{eq:G^(k)_mu_edge_sandwich} with $k=1,2$, we deduce that $G_\rho'(z^*_\rho - \delta),G_\rho''(z^*_\rho - \delta) \to \infty$ as $\delta \downarrow 0$. Putting this information back into \eqref{eq:G^(k)_mu_edge_sandwich}, we conclude that $G_\rho''(z^*_\rho - \delta) \ge \frac{1}{2}c_3^{-1} G_{\musc}''(-2-\delta)$ and $G_\rho'(z^*_\rho - \delta) \le 2 c_3 G_{\musc}'(-2 - \delta)$ for $\delta>0$ small enough. Therefore by \eqref{eq:G_inv''_at_edge_from_inverse_func_rule}
    \begin{equation}
        G_\rho^{[-1]\prime\prime}(g_\rho^*)
        \le
        c_6 \limsup_{\delta \downarrow 0 }
        \left(
            -\frac{
                G_{\musc}''(z^*_{\musc} - \delta)
            }{
                G_{\musc}'(z^*_{\musc} - \delta)^{3}
            }
        \right)
    \end{equation}
    for a constant $c_6>0$. The $\limsup$ on the r.h.s equals $G_{\musc}^{[-1]\prime\prime}(g_\rho^*)<0$ (because of the aforementioned inverse function rule), so (ii) follows.
\end{proof}

The next result shows how to deduce $\supp_- \rho$ and $g^*_\rho$ from the ``shape'' of $G_\rho^{[-1]}$. It is a corollary of the previous lemma, and will be used later with $\rho = \mu \boxplus \nu$ to characterize $z^*_{\mu \boxplus \nu}=\supp_-\mu \boxplus \nu$ and $g_{\mu \boxplus \nu}^*$ in terms of $R_\mu$ and $R_\nu$. This will be possible since
\begin{equation}\label{eq:G_inv_ex_and_J_mu_nu}
    G_{\mu\boxplus\nu}^{[-1]}(g)
    \overset{\eqref{eq:Grho_inv_ext_def}}{=}
    R_{\mu\boxplus\nu}(-g)-g^{-1}
    \overset{\eqref{eq:add_conv_R_trans_real}}{=}    
    R_\mu(-g) + R_\nu(-g) - g^{-1}
    \qquad
    \forall g \in (0,\infty) \cap -D_\mu \cap -D_{\nu}.
\end{equation}
Below we will use
\begin{equation}\label{eq:J_mu_nu_def}
    J_{\mu,\nu}
    :
    (0,\infty)
    \cap
    -(D_\mu \cap D_{\nu})
    \to
    \mathbb{R},
    \quad\quad
    J_{\mu,\nu}(g)
    :=
    R_\mu(-g)
    +
    R_\nu(-g)
    -
    g^{-1},
\end{equation}
for which
\begin{equation}\label{eq:Jmu_nu_and_Gplus_inv}
    J_{\mu,\nu}(g) \overset{\eqref{eq:G_inv_ex_and_J_mu_nu}}{=} G_{\mu\boxplus\nu}^{[-1]}(g)
    \qquad
    \forall g \in \dom(J_{\mu,\nu}).
\end{equation}
A subtle technical point is that we cannot exclude the possibility that $J_{\mu,\nu}$ has a smaller domain than $G_\muplusnu^{[-1]}|_{(0,\infty)}$. For this reason Corollary \ref{cor:Ginv_and_endpoint} below defines
\begin{equation}
    \text{ $J_\rho$ as the restriction $G_\rho^{[-1]}|_{(0,\infty)\cap-D}$
    for an arbitrary interval $D$ s.t. $\hat{D}_\mu \subset D\subset D_\mu$ }
\end{equation}
(see \eqref{eq:J_rho} below and recall \eqref{eq:Ginv_ext_restricted}). As the previous lemma, Corollary \ref{cor:Ginv_and_endpoint} is best understood geometrically. Note that each plot in Figure \ref{fig:Ginv} shows the aforementioned $J_\rho$ - the black curve correspond to $J_\rho|_{-\hat{D}_\rho}$, and depending on the choice of $D$ part or all of the dashed curve represent the rest of $J_\rho$. In the plots
\begin{equation}\label{eq:g^*_rho_geometric}
    \begin{array}{c}
        \text{$g^*_\rho$ is either the leftmost critical point of $J_\rho$, if $J_\rho$ has a critical point,}\\
        \text{and otherwise $J_\rho = G_\rho^{[-1]}|_{(0,\infty)} = G_\rho^{\inv}|_{(0,\infty)}$ and $g^*_\rho$ is the right endpoint of }\dom(J_\rho).
    \end{array}
\end{equation}
Corollary \ref{cor:Ginv_and_endpoint} b) below is a compact way to express this characterization of $g^*_\rho$ as a formula, using the set $\mathcal{G}_\rho$ in \eqref{eq:G_rho_at_endpoint_formula}. Note furthermore that in all plots of Figure \ref{fig:Ginv}, the endpoint $\supp_- \rho$ equals $J_\rho(g^*_\rho)$ (which is defined as in \eqref{eq:f_well_defined_at_endpoints} if $g^*_\rho \notin -D_\rho$). This is expressed in Corollary \ref{cor:Ginv_and_endpoint} c) below.

\begin{corollary}\label{cor:Ginv_and_endpoint}
    For any compactly supported probability
    measure $\rho$ on $\mathbb{R}$ and interval $D$ s.t. 
    $\hat{D}_\rho \subset D \subset D_\rho$,
    let 
    \begin{equation}\label{eq:J_rho}
        J_\rho:(0,\infty)\cap-D\to\mathbb{R},
        \quad\quad\quad
        J_\rho(g)
        =
        R_\rho(-g)-g^{-1},     
    \end{equation}
    \begin{equation}\label{eq:G_rho_at_endpoint_formula}
        \mathcal{G}_\rho
        =
        \left\{
            g\in\dom(J_\rho):J_\rho'(r)>0\text{ for all }r\in(0,g)
        \right\}.
    \end{equation}
    Then the following holds.
    
    a) $J_\rho(g)$ is increasing for $g$ close enough to $0$.
    \begin{equation}
        \text{b) }
        g^*_\rho
        =
        \sup\mathcal{G}_\rho
        =
        \underset{g \in \mathcal{G}_{\mu,\nu}} {\argsup}\,J_\rho(g).
        \quad\quad\quad\quad
        \text{c) }
        \supp_{-}\rho
        =        
        J_\rho(g^*_\rho)
        = \sup_{g\in\mathcal{G}_\rho}J_\rho(g).
    \end{equation}
    
\end{corollary}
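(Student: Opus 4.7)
The plan is to deduce the corollary from Lemma \ref{lem:Ginv_props}, noting that $J_\rho$ agrees with $G_\rho^{[-1]}$ on its domain by \eqref{eq:Grho_inv_ext_def} and \eqref{eq:J_rho}. Since $\hat{D}_\rho \subset D$ and $-\hat{D}_\rho \cap (0,\infty) = (0,g^*_\rho)$ by \eqref{eq:D_hat_def}, the interval $(0,g^*_\rho)$ lies in $\dom(J_\rho)$, and on it $J_\rho$ coincides with $G_\rho^{\inv}$ by \eqref{eq:Ginv_ext_restricted}. I would then invoke Lemma \ref{lem:Ginv_props} \ref{item:1[lem:Ginv_props]} to conclude $J_\rho'(g) > 0$ on $(0,g^*_\rho)$, which gives part a) at once and shows $(0,g^*_\rho) \subset \mathcal{G}_\rho$, so $\sup \mathcal{G}_\rho \ge g^*_\rho$.

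For the reverse inequality the key step will be to argue no $g > g^*_\rho$ can lie in $\mathcal{G}_\rho$, via a case analysis on $g^{*\prime}_\rho$. If $g^{*\prime}_\rho < \infty$, Lemma \ref{lem:Ginv_props} \ref{item:5[lem:Ginv_props]} yields $(0,\infty) \cap -D_\rho = (0,g^*_\rho)$, so $\dom(J_\rho) \subset (0,g^*_\rho)$ already and there is nothing to check. If instead $g^{*\prime}_\rho = \infty$ while $g^*_\rho < \infty$ and $g^*_\rho \in \dom(J_\rho)$, Lemma \ref{lem:Ginv_props} \ref{item:2[lem:Ginv_props]} will give $J_\rho'(g^*_\rho) = 0$; then for any $g > g^*_\rho$ in $\dom(J_\rho)$ the open interval $(0,g)$ contains the point $g^*_\rho$ where $J_\rho'$ vanishes, violating membership in $\mathcal{G}_\rho$. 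The case $g^*_\rho = \infty$ is vacuous. Together these will yield $\sup \mathcal{G}_\rho = g^*_\rho$, the first identity of part b).

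Finally, for part c) and the argsup identity in b), I will use that $J_\rho$ is strictly increasing on $\mathcal{G}_\rho$ (immediate from the defining property, since $g_1 < g_2$ in $\mathcal{G}_\rho$ forces $J_\rho'>0$ on $(0,g_2)\supset\{g_1,g_2\}$), so $\sup_{g \in \mathcal{G}_\rho} J_\rho(g) = \lim_{g \uparrow g^*_\rho} J_\rho(g)$. Since $J_\rho = G_\rho^{\inv}$ on $(0, g^*_\rho)$, this limit equals $\supp_{-}\rho$ by \eqref{eq:Ginv_limit}; the intermediate value $J_\rho(g^*_\rho)$ then equals $\supp_{-}\rho$ either by continuity (when $g^*_\rho \in \dom(J_\rho)$) or by the convention \eqref{eq:f_well_defined_at_endpoints}. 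I do not expect any substantive obstacle; the main care needed is in tracking the three possibilities for the pair $(g^*_\rho, g^{*\prime}_\rho)$ supplied by Lemma \ref{lem:Ginv_props}, so that the endpoint behaviour of $J_\rho$ at $g^*_\rho$ and the interpretation of $J_\rho(g^*_\rho)$ are handled correctly in each.
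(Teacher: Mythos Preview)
Your proposal is correct and follows essentially the same approach as the paper: identify $J_\rho$ with $G_\rho^{[-1]}$ (and with $G_\rho^{\inv}$ on $(0,g^*_\rho)$), use Lemma~\ref{lem:Ginv_props}~\ref{item:1[lem:Ginv_props]} for the inequality $g^*_\rho \le \sup\mathcal{G}_\rho$, and then split into the cases covered by Lemma~\ref{lem:Ginv_props}~\ref{item:2[lem:Ginv_props]} and~\ref{item:5[lem:Ginv_props]} for the reverse inequality, with part~c) following from \eqref{eq:Ginv_limit}. One harmless sub-case you left implicit is $g^{*\prime}_\rho=\infty$, $g^*_\rho<\infty$, $g^*_\rho\notin\dom(J_\rho)$; since $\dom(J_\rho)$ is an interval containing $(0,g^*_\rho)$, this forces $\dom(J_\rho)=(0,g^*_\rho)$ and the bound $\sup\mathcal{G}_\rho\le g^*_\rho$ is immediate.
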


\begin{proof}
    We have
    \begin{equation}\label{eq:D_Dhat}
        (0,g^*_\rho)
        \overset{(*)}{=}
        (0,\infty) \cap -\hat{D}_\mu
        \overset{\eqref{[lem:real_R_trans_extend]eq:D_mu}}{\subset}
        (0,\infty) \cap -D,
    \end{equation}
    where $(*)$ holds by \eqref{eq:G_rho_at_endpoint_abbrev} and \eqref{eq:D_hat_def}. Also by \eqref{eq:Grho_inv_ext_def}-\eqref{eq:Ginv_ext_restricted} and \eqref{eq:J_rho}
    \begin{equation}\label{eq:J_rho_Ginv_rho}
        J_\rho(g)
        \overset{\text{if }g\in-D}{=}
        G_\rho^{[-1]}(g)
        \overset{\text{if } g\in-\hat{D}_\mu}{=}
        G_\rho^{\inv}(g).
    \end{equation}

    {\bf a)} This follows from \eqref{eq:Ginv_increasing} and \eqref{eq:J_rho_Ginv_rho}.
    \,\newline

    {\bf b)} We have $g^*_\rho \le \sup \mathcal{G}_\rho$ by Lemma \ref{lem:Ginv_props} \ref{item:1[lem:Ginv_props]}, \eqref{eq:G_rho_at_endpoint_formula} and \eqref{eq:J_rho_Ginv_rho}. Also, it follows from \eqref{eq:D_Dhat}-\eqref{eq:J_rho_Ginv_rho} that 
    \begin{equation}\label{eq:J_rho'}
        J_\rho'(g^*_\rho) = G_\rho^{[-1]\prime}(g_\rho^{*}) = G_\rho^{\inv\prime}(g_\rho^{*}),
    \end{equation}
    (where one or both expressions on the r.h.s. may be well-defined only in the sense of \eqref{eq:f_well_defined_at_endpoints}; recall Lemma \ref{lem:Ginv_props} \ref{item:well_def_at_gstar[lem:Ginv_props]}). Either Lemma \ref{lem:Ginv_props} \ref{item:2[lem:Ginv_props]} holds, proving that $\sup \mathcal{G}_\rho \le g^*_\rho$ via \eqref{eq:J_rho'}, or Lemma \ref{lem:Ginv_props} \ref{item:5[lem:Ginv_props]} holds, implying that $\dom(J_\rho) = (0,g^*_\rho)$ and therefore $\sup \mathcal{G}_\rho \le g^*_\rho$. This proves that $g^*_\rho = \sup \mathcal{G}_\rho$.    
    Lastly $\sup\mathcal{G}_\rho = \argsup_{g \in \mathcal{G}_\rho}\,J_\rho(g)$ by definition (recall \eqref{eq:G_rho_at_endpoint_formula} and that $J_\rho$ is continuous).
    \,\newline
    
    {\bf c)} The first identity of c) follows by setting $g = g^*_\rho$ in \eqref{eq:J_rho_Ginv_rho}, and using \eqref{eq:Ginv_limit}. Just as in b), the second identity follows by definition.
\end{proof}

As a special case of the corollary we obtain the following characterization of the endpoint $z^*_{\mu\boxplus\nu} = \supp_{-} \mu \boxplus \nu$ in terms of the $R$-transforms $R_\mu$ and $R_\nu$ - or more precisely in terms of the function $J_{\mu,\nu}$ previously defined in \eqref{eq:J_mu_nu_def}. By \eqref{eq:D_sandwich} we can apply Corollary \ref{cor:Ginv_and_endpoint} with $\rho = \mu \boxplus \nu$ and $D = D_\mu \cap D_{\nu}$. As in \eqref{eq:g^*_rho_geometric}, the resulting geometric characterization is that $g^*_{\muplusnu}$ is the leftmost critical point of $J_{\mu,\nu}$, and if no critical point exists it is simply the right endpoint of $\dom(J_{\mu,\nu})$, in addition to $z^*_{\mu \boxplus \nu} = J_{\mu,\nu}(g^*_{\mu \boxplus \nu})$. In formulas, these statements are expressed as Proposition \ref{prop:add_conv_supp_end_point_Rmu_Rnu} b,c) below, cf. Corollary \ref{cor:Ginv_and_endpoint} b,c).

\begin{proposition}\label{prop:add_conv_supp_end_point_Rmu_Rnu}
    Let $\mu,\nu$ be compactly supported probability
    measures on $\mathbb{R}$. Let    \begin{equation}\label{eq:mathcal_G_mu_nu_def}
        \mathcal{G}_{\mu,\nu}
        :
        =
        \left\{
            g \in \dom(J_{\mu,\nu})
            :
            J_{\mu,\nu}'(r)>0
            \text{ for all }
            r\in(0,g)
        \right\}.
    \end{equation}
    Then the following holds.
    
    a) $J_{\mu,\nu}(g)$ is increasing for $g$ close enough to $0$.
    \begin{equation}
        \text{b) }g^*_{\mu \boxplus \nu}
        =
        \sup \mathcal{G}_{\mu,\nu}
        =
        \underset{g \in \mathcal{G}_{\mu,\nu}} {\argsup}\,J_\rho(g).
        \quad\quad
        \text{c) }
        z^*_{\mu\boxplus\nu}
        = 
        J_{\mu,\nu}( g^*_{\mu \boxplus \nu} )
        =        \sup_{g\in\mathcal{G}_{\mu,\nu}} J_{\mu,\nu}(g).
    \end{equation}

    d) The same holds true if in \eqref{eq:J_mu_nu_def} one replaces 
    $(R_\rho,D_\rho)$ by $(\hat{R}_\rho,\hat{D}_\rho)$, for $\rho = \mu$ and/or $\rho = \nu$.
\end{proposition}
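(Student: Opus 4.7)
The plan is to obtain Proposition \ref{prop:add_conv_supp_end_point_Rmu_Rnu} as a direct specialization of Corollary \ref{cor:Ginv_and_endpoint} applied to $\rho = \mu \boxplus \nu$. The crux is to pick the ``intermediate'' interval $D$ in Corollary \ref{cor:Ginv_and_endpoint} so that the resulting function $J_\rho$ from \eqref{eq:J_rho} coincides with the function $J_{\mu,\nu}$ from \eqref{eq:J_mu_nu_def}, so that all three assertions a), b), c) translate verbatim.

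More precisely, I would set $\rho = \mu \boxplus \nu$ and $D = D_\mu \cap D_\nu$ in Corollary \ref{cor:Ginv_and_endpoint}. The hypothesis $\hat{D}_\rho \subset D \subset D_\rho$ needed by Corollary \ref{cor:Ginv_and_endpoint} is exactly the inclusion $\hat{D}_{\mu\boxplus\nu} \subset D_\mu \cap D_\nu \subset D_{\mu\boxplus\nu}$ from \eqref{eq:D_sandwich}. With this choice of $D$, the function $J_\rho$ from \eqref{eq:J_rho} has domain $(0,\infty) \cap -(D_\mu \cap D_\nu)$ and value
\[
    J_\rho(g) = R_{\mu\boxplus\nu}(-g) - g^{-1}
    \overset{\eqref{eq:add_conv_R_trans_real}}{=}
    R_\mu(-g) + R_\nu(-g) - g^{-1}
    \overset{\eqref{eq:J_mu_nu_def}}{=} J_{\mu,\nu}(g),
\]
so $J_\rho = J_{\mu,\nu}$. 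Comparing \eqref{eq:G_rho_at_endpoint_formula} with \eqref{eq:mathcal_G_mu_nu_def} we also see that the sets $\mathcal{G}_\rho$ and $\mathcal{G}_{\mu,\nu}$ agree. Assertions a), b), c) of the proposition are then exactly the statements a), b), c) of Corollary \ref{cor:Ginv_and_endpoint}, since $g^*_\rho = g^*_{\mu \boxplus \nu}$ and $\supp_{-}\rho = z^*_{\mu\boxplus\nu}$ by definition.

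For part d), I would redo the same specialization, but now with $D$ replaced by one of $\hat{D}_\mu \cap D_\nu$, $D_\mu \cap \hat{D}_\nu$, or $\hat{D}_\mu \cap \hat{D}_\nu$, according to which of the two substitutions $R_\mu \to \hat{R}_\mu$, $R_\nu \to \hat{R}_\nu$ is being made. The chain \eqref{eq:D_sandwich} again guarantees that each such choice satisfies $\hat{D}_{\mu\boxplus\nu} \subset D \subset D_{\mu\boxplus\nu}$, so Corollary \ref{cor:Ginv_and_endpoint} still applies, and on the relevant restricted domain $\hat{R}_\rho = R_\rho$ so the modified $J_{\mu,\nu}$ coincides with $J_\rho$.

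The entire proof is really a matter of unwinding definitions, with no analytic work beyond what has already been done in Corollary \ref{cor:Ginv_and_endpoint}. The only point that requires care is the book-keeping of the various nested domains $\hat{D}_\mu, D_\mu, \hat{D}_\nu, D_\nu, \hat{D}_{\mu\boxplus\nu}, D_{\mu\boxplus\nu}$, all of which are controlled via \eqref{eq:D_sandwich}; I do not expect any genuine obstacle.
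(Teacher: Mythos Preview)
Your proposal is correct and matches the paper's own proof essentially verbatim: the paper also derives the proposition as the special case $\rho = \mu\boxplus\nu$ and $D = A\cap B$ of Corollary \ref{cor:Ginv_and_endpoint}, for each combination $A\in\{D_\mu,\hat D_\mu\}$, $B\in\{D_\nu,\hat D_\nu\}$, invoking \eqref{eq:D_sandwich} to check the hypothesis. Your write-up merely spells out the identification $J_\rho = J_{\mu,\nu}$ via \eqref{eq:add_conv_R_trans_real} a little more explicitly than the paper does.
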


\begin{proof}
    This is the special case $\rho=\mu \boxplus \nu$ and $D = A\cap B$ of Corollary \ref{cor:Ginv_and_endpoint}, for any combination of $A \in \{D_\mu,\hat{D}_\mu\}$ and  $B \in \{ D_{\nu},\hat{D}_{\nu} \}$ (the condition $\hat{D}_{\mu \boxplus \nu} \subset D \subset D_{\mu \boxplus \nu}$ indeed holds for each combination by \eqref{eq:D_sandwich}).

\end{proof}

We are now ready to prove the characterization Theorem \ref{thm:add_conv_supp_end_point} of the left endpoint of the support in terms of the function $F_{\mu,\nu}$ from \eqref{eq:F_func_def}. First note that
\begin{equation}\label{eq:RmuplusRnu}
    R_\mu(-g)
    +
    R_\nu(-g)
    -
    g^{-1}
    \overset{\eqref{eq:Grho_inv_ext_def}}{=}
    R_\mu(-g)
    +
    G_{\nu}^{[-1]}(g)
    \qquad
    \forall g \in -(D_\mu \cup D_\nu),
\end{equation}
giving
\begin{equation}\label{eq:Jmunu_and_R_mu_Ginv_nu}  
    J_{\mu,\nu}(g)
    \overset{\eqref{eq:J_mu_nu_def}}{=}
    R_\mu(-g)
    +
    G_{\nu}^{[-1]}(g)
    \qquad
    \forall g \in \dom(J_{\mu,\nu}).
\end{equation}
We will use this identity to show that $F_{\mu,\nu}(h) = J_{\mu,\nu}(G_{\nu}(h))$ (recall \eqref{eq:F_func_def}). Geometrically, it means that the graph of $F_{\mu,\nu}$ is obtained from that of $J_{\mu,\nu}$ by applying the map $G_\nu^{\inv}$ to the $x$-axis. Since $G_\nu^{\inv\prime}(g)>0$ for all $g$, the qualitative shape of the function stays the same, so that e.g. $g$ is a critical point of $J_\rho$ iff $G_\nu^{\inv}(g)$ is a critical point of $F_{\mu,\nu}$. With this in mind, it easy to see how the geometric picture painted by Theorem \ref{thm:add_conv_supp_end_point} follows from the geometric picture of Proposition \ref{prop:add_conv_supp_end_point_Rmu_Rnu} (recall the discussion before that proposition).

\begin{proof}[Proof of Theorem \ref{thm:add_conv_supp_end_point}]
     Assume that either $D=D_\mu$ or $D=\hat{D}_\mu$, and
    \begin{equation}\label{eq:which_dom_F}
        \dom(F_{\mu,\nu})
        =
        \left\{
            h \in (-\infty,\supp_{-}\nu):
            -G_{\nu}(h) \in D
        \right\}.
    \end{equation}

    {\bf \ref{thm:add_conv_supp_end_point-item:dom_F_interval}} Note that $D$ is an open interval containing $0$ (recall \eqref{eq:D_hat_def} and \eqref{[lem:real_R_trans_extend]eq:D_mu}). Also $G_\nu(h)$ is continuous, monotone and $G_\nu(h)\downarrow0$ as $h \downarrow -\infty$ (recall \eqref{eq:G_mu_strict_inc} and  \eqref{eq:G_mu_limit_z_to_inf}). Combining these facts implies that $\dom(F_{\mu,\nu})$ is an open interval that is infinite to the left, as claimed.
    \,\newline
        
    {\bf \ref{thm:add_conv_supp_end_point-item:F_inc}} By \eqref{eq:F_func_def} we have $F_{\mu,\nu}'(h) = -R_\mu'(-G_{\nu}(h))G_\mu'(h)+1$. Furthermore $G_\nu(h),G'_\nu(h)\downarrow0$ as $h\downarrow-\infty$, and also $R_\mu'(-G_{\nu}(h)) \to R_\mu'(0)$ in the same limit. Thus $F'_{\mu,\nu}(h) \to 1$ as $h \downarrow -\infty$, proving that $F_{\mu,\nu}$ is indeed increasing for small enough $h$.
    \,\newline

    {\bf \ref{thm:add_conv_supp_end_point-item:g_star}} We will use the version of Proposition \ref{prop:add_conv_supp_end_point_Rmu_Rnu} with
    \begin{equation}
        \dom(J_{\mu,\nu}) = (0,\infty) \cap -D \cap - \hat{D}_\nu,
    \end{equation}
    to match \eqref{eq:which_dom_F}.
    Then
    \begin{equation}\label{eq:Gnu_dom_F_to_dom_J}
        G_{\nu}\text{ maps }\dom(F_{\mu,\nu})\text{ to }\dom(J_{\mu,\nu})\text{ bijectively},
    \end{equation}
    and by \eqref{eq:F_func_def} and \eqref{eq:Jmunu_and_R_mu_Ginv_nu} 
    \begin{equation}\label{eq:F_J_G}
        F_{\mu,\nu}(h)
        =
        J_{\mu,\nu}(G_{\nu}(h))
        \quad
        \forall h \in \dom(F_{\mu,\nu}).
    \end{equation}
    Thus $F_{\mu,\nu}'(h) = J_{\mu,\nu}'(G_{\nu}(h))G_{\nu}'(h)$. Since $G_{\nu}'(h)>0$ for all $h < \supp_-\nu$ (recall \eqref{eq:mu_k-th_deriv_pos_inc}), this implies that
    \begin{equation}\label{eq:F_crit_G_crit}
        \sign( F_{\mu,\nu}'(h) )
        =
        \sign( J_{\mu,\nu}'( G_{\nu}(h) ) )
        \quad
        \forall h \in \dom(F_{\mu,\nu}).
    \end{equation}    
    Recalling also \eqref{eq:mathcal_H_mu_nu_def} and \eqref{eq:mathcal_G_mu_nu_def}, this implies that
    \begin{equation}
        G_\nu(\mathcal{H}_{\mu,\nu}) = \mathcal{G}_{\mu,\nu}.
    \end{equation}
    Taking the $\sup$ of both sides and using \eqref{eq:mathcal_H_mu_nu_def} for the l.h.s. and Proposition \ref{prop:add_conv_supp_end_point_Rmu_Rnu} b) for the r.h.s. implies the claim \eqref{eq:gstar[thm:add_conv_supp_end_point]}.
    \,\newline
    
    {\bf \ref{thm:add_conv_supp_end_point-item:z_star}} By Proposition \ref{prop:add_conv_supp_end_point_Rmu_Rnu} c) and \eqref{eq:gstar[thm:add_conv_supp_end_point]} and it follows that $z^*_{\mu \boxplus \nu} = J_{\mu,\nu}(G_\nu(h^*_{\mu,\nu}))$. The first identity of the claim \eqref{eq:z_star[thm:add_conv_supp_end_point]} then follows by \eqref{eq:F_J_G}. The second identity of \eqref{eq:z_star[thm:add_conv_supp_end_point]} is elementary (it follows by the definitions \eqref{eq:mathcal_H_mu_nu_def} and the continuity of $F_{\mu,\nu}$).
    \,\newline

    {\bf \ref{thm:add_conv_supp_end_point-item:G_nu(h_star)_in_closure}} The number $g^*_{\mu \boxplus \nu}$ lies in the closure of $-\hat{D}_{\mu \boxplus \nu}$ by the definition \eqref{eq:D_hat_def}. Thus by \eqref{eq:D_sandwich} it also lies in the closure of $-\hat{D}_\mu$. Therefore the claim follows by \eqref{eq:gstar[thm:add_conv_supp_end_point]}.
    \,\newline
    
    {\bf \ref{thm:add_conv_supp_end_point-item:D_hat}} This is already included in the above proof, since we started by assuming $D=D_\mu$ or $D=\hat{D}_\mu$.
\end{proof}

We finish by proving Proposition \ref{prop:Fmunu_props}.

\begin{proof}[Proof of Proposition \ref{prop:Fmunu_props}]
    {\bf \ref{item:leftmost_crit_point[prop:Fmunu_props]}, \ref{item:if_no_crit_point[prop:Fmunu_props]}} Since $\mathcal{H}_{\mu,\nu}$ is non-empty by Theorem \ref{thm:add_conv_supp_end_point} \ref{thm:add_conv_supp_end_point-item:F_inc}, these follows from the definitions in  \eqref{eq:mathcal_H_mu_nu_def}.
    \,\newline
    
    {\bf \ref{item:if_no_crit_point2[prop:Fmunu_props]}} Assume $F_{\mu,\nu}$ has no critical points. Then $h_{\mu,\nu}^* = \sup\dom(F_{\mu,\nu})$ by the already proved \ref{item:if_no_crit_point[prop:Fmunu_props]}. Assume first that $\sup\dom(F_{\mu,\nu})=\supp_{-}\nu$. Then $h_{\mu,\nu}^* = \supp_{-}\nu$, which in turn implies that $g^*_{\mu \boxplus \nu} = G_\nu(\supp_{-}\nu)$ by \eqref{eq:gstar[thm:add_conv_supp_end_point]}. This proves (i).
    Assume now the converse, that $\sup\dom(F_{\mu,\nu})<\supp_{-}\nu$. Then by the definition \eqref{eq:F_func_def} of $\dom(F_{\mu,\nu})$ and since $D_\mu$ is an interval, it must hold that $G_{\nu}(h_{\mu,\nu}^*) = \sup(-D_\mu)$. By Theorem \ref{thm:add_conv_supp_end_point} \ref{thm:add_conv_supp_end_point-item:G_nu(h_star)_in_closure} the quantity $G_{\nu}(h_{\mu,\nu}^*)$ also lies in the closure of $-\hat{D}_\mu \subset - D_\mu$, which implies that in fact $G_{\nu}(h_{\mu,\nu}^*) = \sup(-\hat{D}_\mu) = \sup(-D_\mu) = G_\mu(\supp_-\mu$). This proves the first identity of (ii). The second identity of (ii) is just a repetition of \eqref{eq:gstar[thm:add_conv_supp_end_point]}.
    \,\newline

    {\bf \ref{item:BES_conds[prop:Fmunu_props]}}
    By Lemma \ref{lem:Ginv_sc_edge} \ref{item:sc_edge_behaviour[lem:Ginv_sc_edge]} the point $g^*_{\mu \boxplus \nu}$ is a local maximum of $G^{[-1]}_{\mu \boxplus \nu}$. By \eqref{eq:Jmu_nu_and_Gplus_inv} it is therefore also a local maximum of $J_{\mu,\nu}$. Using the change of variables \eqref{eq:F_J_G} and recalling \eqref{eq:F_crit_G_crit} proves that $h^*_{\mu,\nu} = G^{\inv}_\nu(g^*_{\mu \boxplus \nu})$ is a local maximum of $F_{\mu,\nu}$.
    \,\newline
    
    {\bf \ref{item:F'_monotone[prop:Fmunu_props]}} Assume $F_{\mu,\nu}$ is strictly concave. Note that $F_{\mu,\nu}(h)$ is increasing for small enough $h$, and $\dom(F_{\mu,\nu})$ is bounded from the right. Strict concavity thus implies that $F_{\mu,\nu}$ has a unique global maximum. By the already proved \ref{item:leftmost_crit_point[prop:Fmunu_props]},\ref{item:if_no_crit_point[prop:Fmunu_props]}, the maximum must be $h^*_{\mu,\nu}$, so the right part of \eqref{eq:end_point_etc_special_cases_sc_MP} follows by \eqref{eq:gstar[thm:add_conv_supp_end_point]}. The left part of \eqref{eq:end_point_etc_special_cases_sc_MP} follows by \eqref{eq:z_star[thm:add_conv_supp_end_point]}.
    \,\newline
    
    {\bf \ref{item:F(h)=z_sol[prop:Fmunu_props]}} By the definition \eqref{eq:mathcal_H_mu_nu_def}, $F_{\mu,\nu}$ is strictly increasing on $(-\infty,h_{\mu,\nu}^*)$. Recall also from \eqref{eq:z_star[thm:add_conv_supp_end_point]} that $F_{\mu,\nu}(h^*_{\mu,\nu}) = z^*_{\mu \boxplus \nu}$. This implies that the equation $F_{\mu,\nu}(h) = z$ has a solution in $(-\infty,h_{\mu,\nu}^*)$ iff $z < z^*_{\mu \boxplus \nu}$.
    
    If $z < z^*_{\mu \boxplus \nu}$, then by definition the unique solution is $h:=h_{\mu,\nu}^<(z)$. Let $g := G_\nu(h_{\mu,\nu}^<(z))$. We have $J_{\mu,\nu}(g) = F_{\mu,\nu}(h)$ by \eqref{eq:F_J_G}. Also $J_{\mu,\nu}(g) = G_{\mu \boxplus \nu}^{[-1]}(g)$ by \eqref{eq:G_inv_ex_and_J_mu_nu}. Furthermore $g = G_\nu( h^<_{\mu,\nu}(z)) < G_\nu( h^*_{\mu,\nu} ) = g_{\mu \boxplus \nu}^*$, by \eqref{eq:gstar[thm:add_conv_supp_end_point]} and since $G_\nu$ is strictly increasing. Thus $G_{\mu \boxplus \nu}^{[-1]}(g) = G_{\mu \boxplus \nu}^{\inv}(g)$ by \eqref{eq:Ginv_ext_restricted}. Combining these identities, we deduce that $G_{\mu \boxplus \nu}^{\inv}(g) = F_{\mu,\nu}(h) = z$. This in turn implies that $G_{\mu \boxplus \nu}(z) = g$, completing the proof.
\end{proof}

\section{Variational formula for logarithmic potential (Proof of Theorem \ref{TH:main})}\label{sect:Proof}

We start by computing the derivative of the function $E_{\mu,\nu,z}$. 

\begin{lemma}\label{lem:E_mu_nu_z_deriv}
    Let $\mu,\nu$ be compactly supported
    probability measures on $\mathbb{R}$. For all $z<z^*_{\mu\boxplus\nu}$,
    the function $E_{\mu,\nu,z}(g)$ from \eqref{eq:E_def} is differentiable in the domain
    $\mathcal{E}_{\mu,\nu,z}$ from \eqref{eq:E_func_dom_no_hat}, and for all $g\in\mathcal{E}_{\mu,\nu,z}$
    \begin{equation}\label{eq:E_deriv}
        E_{\mu,\nu,z}'(g)
        =
        R_\mu'(-g)
        \left(
            g
            -
            \int\frac{1}{\lambda-z+R_\mu(-g)}\nu(d\lambda)        
        \right).
    \end{equation}
\end{lemma}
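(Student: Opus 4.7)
The plan is to differentiate the two summands in the definition \eqref{eq:E_def} of $E_{\mu,\nu,z}$ separately and then combine the results. For the first summand $\int_0^g s R_\mu'(-s)\,ds$, the integrand $s \mapsto s R_\mu'(-s)$ is continuous on any compact subinterval of $-D_\mu$ containing $0$ and $g$ (since $R_\mu$ is real analytic on $D_\mu$ by Lemma \ref{lem:real_R_trans_extend}), so the fundamental theorem of calculus immediately gives that the derivative in $g$ equals $g R_\mu'(-g)$.

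The substantive step is to differentiate the second summand $\int \log(\lambda - z + R_\mu(-g))\,\nu(d\lambda)$, which I would handle by justifying an exchange of derivative and integral via dominated convergence applied to difference quotients. The formal $g$-derivative of the integrand is $-R_\mu'(-g)/(\lambda - z + R_\mu(-g))$. To legitimize the exchange, I would observe that the defining condition $g \in \mathcal{E}_{\mu,\nu,z}$ in \eqref{eq:E_func_dom_no_hat} yields $\lambda - z + R_\mu(-g) \geq \supp_{-}\nu - (z - R_\mu(-g)) > 0$ for every $\lambda \in \supp\,\nu$. Since $\mathcal{E}_{\mu,\nu,z}$ is open (being the intersection of the open set $-D_\mu$ with the open set $\{g' : z - R_\mu(-g') < \supp_-\nu\}$, the latter open by continuity of $R_\mu$), I can choose $\delta > 0$ with $[g-\delta,g+\delta] \subset \mathcal{E}_{\mu,\nu,z}$. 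Continuity of $g' \mapsto z - R_\mu(-g')$ on this compact interval together with compactness of $\supp\,\nu$ then produces a uniform positive lower bound $c>0$ for $\lambda - z + R_\mu(-g')$ over $g' \in [g-\delta,g+\delta]$ and $\lambda \in \supp\,\nu$. Consequently the difference quotients are dominated by the constant $c^{-1}\sup_{g'\in[g-\delta,g+\delta]}|R_\mu'(-g')|$, which is finite (again by continuity of $R_\mu'$) and thus trivially $\nu$-integrable.

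Adding the two derivatives produces $E_{\mu,\nu,z}'(g) = g R_\mu'(-g) - R_\mu'(-g) \int (\lambda - z + R_\mu(-g))^{-1} \nu(d\lambda)$, and factoring out $R_\mu'(-g)$ yields \eqref{eq:E_deriv}. The only genuinely non-trivial ingredient is the uniform positive lower bound on $\lambda - z + R_\mu(-g)$ that legitimizes differentiation under the integral; this is immediate from the defining inequality of $\mathcal{E}_{\mu,\nu,z}$ combined with standard continuity and compactness. The hypothesis $z < z^*_{\mu \boxplus \nu}$ is not used in the computation itself, but ensures $\mathcal{E}_{\mu,\nu,z}$ is non-empty and corresponds to the regime relevant for the subsequent applications (namely Theorem \ref{TH:main}).
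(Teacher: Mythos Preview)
Your proof is correct and follows essentially the same approach as the paper: differentiate the two summands of \eqref{eq:E_def} separately (fundamental theorem of calculus for the first, differentiation under the integral for the second) and factor out $R_\mu'(-g)$. You supply more detail than the paper in justifying the exchange of derivative and integral via dominated convergence and the uniform lower bound on $\lambda - z + R_\mu(-g)$, whereas the paper simply asserts differentiability from the defining inequality of $\mathcal{E}_{\mu,\nu,z}$.
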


\begin{proof}
    Fix $\mu,\nu$ and $z<z^*_{\mu\boxplus\nu}$. Note that
    $z-R_\mu(-g)<\supp_{-}\nu$ for $g\in\mathcal{E}_{\mu,\nu,z}$ by the definition \eqref{eq:E_func_dom_no_hat}, so the integral in \eqref{eq:E_def} is well-defined and differentiable as a function of $g$. Differentiating the r.h.s. of \eqref{eq:E_def} we obtain that
    \begin{equation}
            E_{\mu,\nu,z}'(g)
            =
            g R_\mu'(-g)
            -
            R_\mu'(-g)\int\frac{1}{\lambda-z+R_\mu(-g)}\nu(d\lambda)
    \end{equation}
    for all $g\in\mathcal{E}_{\mu,\nu,z}$, which rearranged yields \eqref{eq:E_deriv}.
\end{proof}

The fixed point equation
\begin{equation}\label{eq:fixed_point}
    \int\frac{1}{\lambda-z+R_\mu(-g)}\nu(d\lambda)
    =
    g,
\end{equation}
(cf. \eqref{eq:E_deriv}) characterizes the Stieltjes transform $G_{\mu \boxplus \nu}$ of the additive convolution. In the simplest scenario \eqref{eq:fixed_point} has no solution for $z>z^*_{\mu\boxplus\nu}$, and for $z<z^*_{\mu\boxplus\nu}$ it has a unique solution equalling $g=G_{\mu \boxplus \nu}(z)$. However, there are $\mu,\nu$ s.t. \eqref{eq:fixed_point} has solutions also for $z>z^*_{\mu\boxplus\nu}$, and $\mu,\nu$ s.t. \eqref{eq:fixed_point} has several solutions for $z<z^*_{\mu\boxplus\nu}$ (see Figure \ref{fig:intro_E_mu_nu_z_F_mu_nu_shapes1} b,c,d). If $z<z^*_{\mu\boxplus\nu}$ then $g=G_{\mu \boxplus \nu}(z)$ is the leftmost solution, as proved by Lemma \ref{lem:fp_diff_sign} below.

Because of \eqref{eq:E_deriv}, these solutions are critical points of $E_{\mu,\nu,z}$. If $z<z^*_{\mu\boxplus\nu}$ then $g=G_{\mu \boxplus \nu}(z)$ is a local minimum, which is the global minimum of $E_{\mu,\nu,z}$ in $\mathcal{E}_{\mu,\nu,z} \cap (0,g^*_{\mu\boxplus\nu})$, as proved by Lemma \ref{lem:Gplus_is_min} below. This $g$ may or may not be the global minimum in the full domain $\mathcal{E}_{\mu,\nu,z}$ (in Figure \ref{fig:intro_E_mu_nu_z_F_mu_nu_shapes1} a) it is, but in b,c) it is not).

Note that by the definition \eqref{eq:defSti}
\begin{equation}\label{eq:fixed_point_eq_int_as_G_nu}
    \int\frac{1}{\lambda-z+R_\mu(-g)}\nu(d\lambda) = G_\nu(z-R_\mu(-g))
    \quad\text{ for all }\quad
    g \in \mathcal{E}_{\mu,\nu,z},
\end{equation}
so the fixed point equation \eqref{eq:fixed_point} is equivalent to
\begin{equation}\label{eq:fixed_point_Gnu}
    G_{\nu}(z-R_\mu(-g)) = g.
\end{equation}
We now give the aforementioned Lemma \ref{lem:fp_diff_sign}, which characterizes solutions of the fixed point equation \eqref{eq:fixed_point},\eqref{eq:fixed_point_Gnu}.

\begin{lemma}\label{lem:fp_diff_sign}
    Let $\mu,\nu$ be compactly supported probability
    measures on $\mathbb{R}$ and recall \eqref{eq:G_rho_at_endpoint_abbrev}.
     \begin{enumerate}[label=\alph*)]
     
        \item\label{<lem:G_sign>_a)}
        For any $z\in\mathbb{R}$ and $g \in \mathcal{E}_{\mu,\nu,z}$
        \begin{equation}\label{eq:sign_equality}
            \sign\left( 
                g
                -
                G_{\nu}(z-R_\mu(-g))
            \right)
            =
            \begin{cases}
                -1 & \text{ if } g\le 0,\\
                1 & \text{ if }g \ge g^*_\nu,\\
                \sign\left( F_{\mu,\nu}(G_\nu^{\inv}(g)) - z \right) &  \text{ if }g \in (0,g^*_\nu),\\
                \sign\left( G_{\muplusnu}^{[-1]}(g) - z \right) & \text{ if }g \in (0,g^*_\nu),\\
                \sign\left( g- G_{\muplusnu}(z)  \right) & \text{ if }
                z < z^*_{\mu\boxplus\nu},
                g \in (0,g^*_{\mu\boxplus\nu}).
            \end{cases}
        \end{equation}

        \item\label{item:D_+[lem:fp_diff_sign]} If $z \in \mathbb{R}$ and $g \in \mathcal{E}_{\mu,\nu,z} \cap -D_\mu^+$, then
        \begin{equation}
            \sign(E_{\mu,\nu,z}'(g)) = \eqref{eq:sign_equality}.
        \end{equation}
        
        \item\label{<lem:G_sign>_b)}
        If $z<z^*_{\mu\boxplus\nu}$ then
        \begin{equation}\label{eq:GboxplusinE}
            G_{\mu\boxplus\nu}(z) \in \hat{\mathcal{E}}_{\mu,\nu,z} \cap (0,\infty).
        \end{equation}
        
        \item\label{<lem:G_sign>_c)}
        If $z<z^*_{\mu\boxplus\nu}$ then
        \begin{equation}\label{eq:G+(z)_solved_fixed_point_eq}
            g=G_{\mu\boxplus\nu}(z)
            \quad\text{satisfies}\quad
            \eqref{eq:fixed_point},\eqref{eq:fixed_point_Gnu}.
        \end{equation}
        
    \end{enumerate}
\end{lemma}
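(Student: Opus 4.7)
The plan is to follow the chain of equivalences sketched in the introduction at \eqref{eq:intro_fp_iff}, taking parts (c) and (d) first (they amount to a single computation), then deducing the sign equalities of part (a) case by case, and finally obtaining part (b) by plugging into Lemma \ref{lem:E_mu_nu_z_deriv}.

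For parts (c) and (d), fix $z < z^*_{\mu\boxplus\nu}$ and set $g := G_{\mu\boxplus\nu}(z)$. Since $G_{\mu\boxplus\nu}$ is strictly increasing on $(-\infty,\supp_{-}\mu\boxplus\nu)$ with vanishing limit at $-\infty$ (cf. \eqref{eq:G_mu_strict_inc}, \eqref{eq:G_mu_limit_z_to_inf}), one has $g \in (0, g^*_{\mu\boxplus\nu})$, and by the sandwich \eqref{eq:D_sandwich} this gives $g \in -\hat{D}_\mu \cap -\hat{D}_\nu \subset -(D_\mu \cap D_\nu)$. Applying $G_{\mu\boxplus\nu}^{\inv}$ to both sides of $g = G_{\mu\boxplus\nu}(z)$ and expanding via \eqref{eq:def_real_R_trans_hat} and the analytic extension of Lemma \ref{lem:real_R_trans_extend} yields $z = R_{\mu\boxplus\nu}(-g) - g^{-1}$. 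Splitting the convolution $R$-transform via \eqref{eq:add_conv_R_trans_real} (applicable since $-g \in D_\mu \cap D_\nu$) and recognising $R_\nu(-g) - g^{-1} = G_\nu^{\inv}(g)$ via the same inverse identity produces
\begin{equation*}
z - R_\mu(-g) = R_\nu(-g) - g^{-1} = G_\nu^{\inv}(g) < \supp_{-} \nu.
\end{equation*}
This is exactly part (c), and applying $G_\nu$ to both ends together with \eqref{eq:fixed_point_eq_int_as_G_nu} gives part (d).

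For part (a) I would handle the five cases from top to bottom. For any $g \in \mathcal{E}_{\mu,\nu,z}$, the constraint $z - R_\mu(-g) < \supp_{-}\nu$ built into the domain forces $G_\nu(z - R_\mu(-g)) \in (0, g^*_\nu)$ by \eqref{eq:G_mu_maps_bijectively}, so the first two lines ($g \le 0$ and $g \ge g^*_\nu$) reduce to sign arithmetic. For $g \in (0, g^*_\nu)$ both $z - R_\mu(-g)$ and $G_\nu^{\inv}(g)$ lie in $(-\infty,\supp_{-}\nu)$ where $G_\nu$ is strictly increasing, so
\begin{equation*}
\sign\bigl(g - G_\nu(z - R_\mu(-g))\bigr) = \sign\bigl(G_\nu^{\inv}(g) - (z - R_\mu(-g))\bigr) = \sign\bigl(F_{\mu,\nu}(G_\nu^{\inv}(g)) - z\bigr)
\end{equation*}
by the definition \eqref{eq:F_func_def} of $F_{\mu,\nu}$, which is the third line. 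The fourth line then follows from the identity $F_{\mu,\nu}(G_\nu^{\inv}(g)) = J_{\mu,\nu}(g) = G_{\mu\boxplus\nu}^{[-1]}(g)$, combining \eqref{eq:F_J_G} and \eqref{eq:Jmu_nu_and_Gplus_inv} once one checks $g \in \dom(J_{\mu,\nu})$ (again via the sandwich \eqref{eq:D_sandwich}). For the fifth line, under the stronger hypothesis $g \in (0, g^*_{\mu\boxplus\nu})$ the identity \eqref{eq:Ginv_ext_restricted} replaces $G_{\mu\boxplus\nu}^{[-1]}$ by $G_{\mu\boxplus\nu}^{\inv}$; strict monotonicity of $G_{\mu\boxplus\nu}^{\inv}$ on $(0, g^*_{\mu\boxplus\nu})$ (Lemma \ref{lem:Ginv_props}) combined with part (c) applied to $g = G_{\mu\boxplus\nu}(z)$ gives the remaining sign equality.

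Part (b) is then immediate: rewriting the integral in the derivative formula \eqref{eq:E_deriv} through \eqref{eq:fixed_point_eq_int_as_G_nu} gives $E'_{\mu,\nu,z}(g) = R_\mu'(-g)\bigl(g - G_\nu(z - R_\mu(-g))\bigr)$, and the hypothesis $g \in -D_\mu^+$ forces $R_\mu'(-g) > 0$ by \eqref{eq:D_mu_plus_def}, so the sign of $E'_{\mu,\nu,z}(g)$ matches the one already computed in part (a). I do not anticipate a genuine obstacle here; the most delicate bookkeeping is the initial domain check, where the sandwich \eqref{eq:D_sandwich} is exactly what makes it legal to invoke the additivity \eqref{eq:add_conv_R_trans_real} of $R_{\mu\boxplus\nu}$ and the inverse identity for $R_\nu$ simultaneously at the point $-g$.
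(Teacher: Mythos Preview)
Your proposal is correct and follows essentially the same approach as the paper; the only difference is ordering—you establish (c) and (d) first by the direct chain $z = R_{\mu\boxplus\nu}(-g) - g^{-1} = R_\mu(-g) + G_\nu^{\inv}(g)$, whereas the paper proves (a) first (deriving the key identity $R_\mu(-g) + G_\nu^{\inv}(g) = G_{\mu\boxplus\nu}^{[-1]}(g)$ inside that proof) and then uses it for (c). The sign arguments for (a) and the reduction for (b) are identical in both treatments.
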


\begin{proof}
    
    {\bf \ref{<lem:G_sign>_a)}} For any $z\in\mathbb{R}$ and $g\in\mathcal{E}_{\mu,\nu,z}$ we have $z-R_\mu(-g)<\supp_{-}\nu$ by the definition \eqref{eq:E_func_dom_no_hat} of $\mathcal{E}_{\mu,\nu,z}$. Thus $G_\nu(z-R_\mu(-g))>0$, which proves the the top line of \eqref{eq:sign_equality}. Furthermore, since $G_{\nu}$ is strictly monotone on $(-\infty,\supp_{-}\nu)$ it holds that $G_{\nu}(z-R_\mu(-g)) < G_\nu(\supp_{-}\nu) = g^*_\nu$, so the second line of \eqref{eq:sign_equality} follows.
    
    If on the other hand $g \in (0,g^*_\nu)$, then applying $G_\nu^{\inv}$ ``to both sides'' yields
    \begin{equation}\label{eq:sign_intermediate}
        \sign\left( 
                g
                -
                G_{\nu}(z - R_\mu(-g))
            \right) 
        \overset{\eqref{eq:Ginv_increasing}}{=}
        \sign( G_\nu^{\inv}(g) - z + R_\mu(-g)) ).
    \end{equation}    
    Note that we have assumed $g\in\mathcal{E}_{\mu,\nu,z}$, so that by definition $g\in-D_\mu$. If also $g \in (0,g^*_\nu)$ then $h = G_{\nu}^{\inv}(g)$ is well-defined, and $h < \supp_- \nu$. Combining these we obtain that $h \in \dom(F_{\mu,\nu})$ (recall \eqref{eq:F_func_def}). Furthermore
    \begin{equation}
        F_{\mu,\nu}( G_{\nu}^{\inv}(g) )
        \overset{\eqref{eq:F_func_def}}{=}
        R_\mu(-g) + G_{\nu}^{\inv}(g).
    \end{equation}
    From this and \eqref{eq:sign_intermediate} the third line of \eqref{eq:sign_equality} follows.

    Turning to the fourth line of \eqref{eq:sign_equality}, recall our assumption that $g \in (0,g^*_\nu)$ and consider again the r.h.s. of \eqref{eq:sign_intermediate}. Note that $-D_\mu\cap-\hat{D}_\nu \subset -D_{\mu\boxplus\nu}$ (recall \eqref{eq:D_sandwich}). Also, combining \eqref{eq:G_inv_ex_and_J_mu_nu} and \eqref{eq:RmuplusRnu} and recalling \eqref{eq:Ginv_ext_restricted} 
    we obtain
    \begin{equation}\label{eq:I}
        R_\mu(-g) + G_\nu^{\inv}(g)
        =
        G_{\mu\boxplus\nu}^{[-1]}(g)
        \qquad
        \forall g \in -D_\mu \cap -\hat{D}_{\nu} \setminus \{0\}.
    \end{equation}
    Since $\mathcal{E}_{\mu,\nu,z} \subset -D_\mu$ by \eqref{eq:E_func_dom_no_hat}, this completes the proof the fourth line of \eqref{eq:sign_equality}.
    
    Lastly, assuming that $z<z^*_{\mu\boxplus\nu}$ and $g \in (0, g^*_{\mu\boxplus\nu})$ we obtain
    \begin{equation}\label{eq:II}
        \sign( G_{\mu\boxplus\nu}^{[-1]}(g) - z )
        \overset{g \in (0,g^*_{\mu\boxplus\nu})}{=}
        \sign( G^{\inv}_{\mu \boxplus \nu}(g) - z )
        \overset{\eqref{eq:G_mu_strict_inc}}{=}
        \sign( g - G_{\muplusnu}(z) ),
    \end{equation}    
    where in the last step we applied $G_{\mu \boxplus \nu}$ ``to both sides''. This proves the last line of \eqref{eq:sign_equality}.\\

    {\bf \ref{item:D_+[lem:fp_diff_sign]}} This is a consequence of \eqref{eq:D_mu_plus_def}, \eqref{eq:E_deriv} and \eqref{eq:fixed_point_eq_int_as_G_nu}.\\
    
    {\bf \ref{<lem:G_sign>_b)}}
    Assume that $z<z^*_{\mu\boxplus\nu}$ and let $g(z) := G_{\mu\boxplus\nu}(z)$. Then    
    \begin{equation}
        g(z)
        =
        G_{\mu\boxplus\nu}(z)
        \overset{\eqref{eq:D_hat_def}} \in
        - \hat{D}_{\mu \boxplus \nu}
        \overset{ \eqref{eq:D_sandwich} } \subset 
        -(\hat{D}_\mu \cap \hat{D}_\nu).
    \end{equation}
    Thus $g(z) \in - \hat{D}_\mu$, so recalling \eqref{eq:E_func_dom_Ehat}, we observe that \eqref{eq:GboxplusinE} follows once we verify that
    \begin{equation}\label{eq:suff_to_show}
        z - R_\mu(-g(z)) < \supp_- \nu.
    \end{equation}
    To this end, note that $G_{\mu \boxplus \nu}^{[-1]}(g(z))= G_{\mu \boxplus \nu}^{\inv}(g(z))  = z$, so that by \eqref{eq:I}
    \begin{equation}
        R_\mu(-g(z)) + G_\nu^{\inv}(g(z))
        =
        G_{\muplusnu}^{\inv}(g(z))
        =
        z.
    \end{equation}
    Since $G_\nu^{\inv}(g(z))<\supp_{-}\nu$ (recall \eqref{eq:Ginv_maps_bijecitvely}), this implies \eqref{eq:suff_to_show}, and therefore completes the proof.\\
    
    {\bf \ref{<lem:G_sign>_c)}} Recall that $\hat{\mathcal{E}}_{\mu,\nu,z} \subset \mathcal{E}_{\mu,\nu,z}$. Thus the claim is an immediate consequence of the last line of \ref{<lem:G_sign>_a)} and of \ref{<lem:G_sign>_b)}.
\end{proof}

\begin{remark}
    The characterization Proposition \ref{prop:Emunu_props} \ref{prop:Emunu_props_E_func_crit_point} of all critical points of $E_{\mu,\nu,z}$, including spurious such points (see Remark \ref{rem:intro_spurios_crit_point_charaterization}), is essentially a consequence of the third line of \eqref{eq:sign_equality} (the formal proof is at the end of the section).
    
    From the fourth line of \eqref{eq:sign_equality} one obtains the alternative characterization that $g$ 
    is a critical point of $E_{\mu,\nu,z}$ iff $R_\mu'(-g)=0$ or $G_{\mu \boxplus \nu}^{[-1]}(g)=z$. Spurious critical points of $E_{\mu,\nu,z}$ thus arise either because $R_\mu'$ has a zero (which is then necessarily outside $\hat{D}_\mu$ by Lemma \ref{lemma:Rmu_increasing}), or because the analytic extension $G_{\mu \boxplus \nu}^{[-1]}$ of the inverse $G_{\mu \boxplus \nu}^{\inv}$ takes the value $z$ for some $g > g^*_{\mu \boxplus \nu}$. See Figure \ref{fig:intro_E_mu_nu_z_F_mu_nu_shapes1} b,c,d).
\end{remark}

We now prove that if $z<z^*_{\mu\boxplus\nu}$, then the minimizer in the variational formula in \eqref{eq:main} is $g=G_{\mu\boxplus\nu}(z)$.

\begin{lemma}\label{lem:Gplus_is_min}
    Let $\mu,\nu$ be compactly supported probability measures on $\mathbb{R}$.
    \begin{enumerate}[label=\alph*)]
        \item \label{item:z>=supp-_no_crit_pt[lem:G+_is_min]} If $z \ge z^*_{\mu\boxplus\nu}$, then $E_{\mu,\nu,z}$ is strictly decreasing in $\mathcal{E}_{\mu,\nu,z}\cap(0,g^*_{\mu \boxplus \nu})$.
        
        \item \label{item:z<supp-_one_crit_pt[lem:G+_is_min]} If $z<z^*_{\mu\boxplus\nu}$, then $E_{\mu,\nu,z}$ has exactly one critical point in $\mathcal{E}_{\mu,\nu,z}\cap(0,g^*_{\mu \boxplus \nu})$, which is a non-degenerate local minimum, and the global minimum inside that set. Furthermore, the minimum equals
        \begin{equation}
            g = G_{\muplusnu}(z) \in \hat{\mathcal{E}}_{\mu,\nu,z} \cap (0,g^*_{\mu \boxplus \nu}).
        \end{equation}
        In particular,
        \begin{equation}\label{[lem:G+_is_min]eq:inf}
            \inf_{g \in \hat{\mathcal{E}}_{\mu,\nu,z} \cap (0,g^*_{\mu\boxplus\nu})}
            E_{\mu,\nu,z}(g)
            =
            \inf_{
                g
                \in
                \mathcal{E}_{\mu,\nu,z}            
                \cap(0,g^*_{\mu \boxplus \nu})
            }
            E_{\mu,\nu,z}(g)
            =
            E_{\mu,\nu,z}(G_{\muplusnu}(z)).
        \end{equation}
    \end{enumerate}
\end{lemma}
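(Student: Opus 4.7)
Throughout the proof I will exploit the product structure of the derivative from Lemma \ref{lem:E_mu_nu_z_deriv}, namely $E'_{\mu,\nu,z}(g) = R_\mu'(-g)\,\phi(g)$ with $\phi(g) := g - G_\nu(z - R_\mu(-g))$. The key observation is that on the interval of interest the first factor is strictly positive: by \eqref{eq:D_sandwich} we have $(0,g^*_{\mu\boxplus\nu}) \subset -\hat{D}_{\mu\boxplus\nu} \subset -\hat{D}_\mu$, and then Lemma \ref{lemma:Rmu_increasing} (equivalently \eqref{eq:Dhatmu_subset_D+mu}) gives $R_\mu'(-g)>0$ throughout $(0,g^*_{\mu\boxplus\nu})$. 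Hence the sign of $E'_{\mu,\nu,z}(g)$ on $\mathcal{E}_{\mu,\nu,z} \cap (0,g^*_{\mu\boxplus\nu})$ coincides with the sign of $\phi(g)$, which is precisely what Lemma \ref{lem:fp_diff_sign} \ref{<lem:G_sign>_a)} computes.

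\textbf{Part (a).} For $z \ge z^*_{\mu\boxplus\nu}$, I apply the fourth line of \eqref{eq:sign_equality}, giving $\sign\phi(g) = \sign(G_{\muplusnu}^{[-1]}(g) - z)$. Since $g \in (0,g^*_{\mu\boxplus\nu})$, \eqref{eq:Ginv_ext_restricted} and \eqref{eq:Ginv_maps_bijecitvely} yield $G_{\muplusnu}^{[-1]}(g) = G_{\muplusnu}^{\inv}(g) \in (-\infty,z^*_{\mu\boxplus\nu}) \subset (-\infty,z)$, so $\sign\phi(g)=-1$ and $E'_{\mu,\nu,z}<0$ on the whole set, giving the strict decrease.

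\textbf{Part (b).} For $z<z^*_{\mu\boxplus\nu}$, the last line of \eqref{eq:sign_equality} yields $\sign(E'_{\mu,\nu,z}(g)) = \sign(g - G_{\muplusnu}(z))$ on $\mathcal{E}_{\mu,\nu,z} \cap (0,g^*_{\mu\boxplus\nu})$. Hence $g_0 := G_{\muplusnu}(z)$ is the unique zero of $E'_{\mu,\nu,z}$ inside this set, $E_{\mu,\nu,z}$ is strictly decreasing left of $g_0$ and strictly increasing right of $g_0$, so $g_0$ is the unique critical point and also the global minimum in the set. Membership $g_0 \in \hat{\mathcal{E}}_{\mu,\nu,z}\cap(0,g^*_{\mu\boxplus\nu})$ is exactly Lemma \ref{lem:fp_diff_sign} \ref{<lem:G_sign>_b)}, and since $\hat{\mathcal{E}}_{\mu,\nu,z}\subset\mathcal{E}_{\mu,\nu,z}$ the two infima in \eqref{[lem:G+_is_min]eq:inf} sandwich $E_{\mu,\nu,z}(g_0)$ and must equal it.

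\textbf{Non-degeneracy and main obstacle.} The only non-routine point is checking $E''_{\mu,\nu,z}(g_0)>0$. Since $\phi(g_0)=0$, the product rule gives $E''_{\mu,\nu,z}(g_0) = R_\mu'(-g_0)\phi'(g_0)$. To compute $\phi'(g_0)$, I differentiate the identity $R_\mu(-g) + G_\nu^{\inv}(g) = G_{\muplusnu}^{[-1]}(g)$ from \eqref{eq:I} (applicable because $g_0 \in -\hat{D}_\nu \cap -D_\mu$), obtaining $(G_\nu^{\inv})'(g_0) - R_\mu'(-g_0) = (G_{\muplusnu}^{\inv})'(g_0)$. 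Then, using $z - R_\mu(-g_0) = G_\nu^{\inv}(g_0)$ (an equivalent form of the fixed-point equation) and the inverse function rule $G_\nu'(G_\nu^{\inv}(g_0)) = 1/(G_\nu^{\inv})'(g_0)$, I find
\begin{equation*}
    \phi'(g_0) \;=\; 1 - G_\nu'(z - R_\mu(-g_0))\,R_\mu'(-g_0) \;=\; \frac{(G_{\muplusnu}^{\inv})'(g_0)}{(G_\nu^{\inv})'(g_0)}.
\end{equation*}
Both numerator and denominator are strictly positive by Lemma \ref{lem:Ginv_props} \ref{item:1[lem:Ginv_props]} applied to $\mu\boxplus\nu$ and $\nu$ respectively, so $\phi'(g_0)>0$ and hence $E''_{\mu,\nu,z}(g_0)>0$. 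This second-derivative calculation, and in particular invoking the differentiated form of \eqref{eq:I}, is the only step that goes beyond mechanically applying the sign dichotomy of Lemma \ref{lem:fp_diff_sign}, and is therefore the main obstacle.
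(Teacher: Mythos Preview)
Your proof is correct and follows essentially the same route as the paper: both arguments reduce everything to the sign of $E'_{\mu,\nu,z}$ via Lemma~\ref{lem:E_mu_nu_z_deriv}, note that $(0,g^*_{\mu\boxplus\nu})\subset -\hat D_\mu$ so that $R_\mu'(-g)>0$ there, and then read off the sign of the second factor from Lemma~\ref{lem:fp_diff_sign}~\ref{<lem:G_sign>_a)},~\ref{item:D_+[lem:fp_diff_sign]}. Your treatment of part~(b) and the identification of the minimizer via Lemma~\ref{lem:fp_diff_sign}~\ref{<lem:G_sign>_b)} matches the paper (one tiny omission: that reference only gives $g_0\in\hat{\mathcal E}_{\mu,\nu,z}\cap(0,\infty)$; the strict inequality $g_0<g^*_{\mu\boxplus\nu}$ comes separately from $z<z^*_{\mu\boxplus\nu}$ and the strict monotonicity of $G_{\mu\boxplus\nu}$).

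The one place you go beyond the paper is the explicit verification of non-degeneracy $E''_{\mu,\nu,z}(g_0)>0$. The paper's own proof only records the sign change of $E'_{\mu,\nu,z}$ across $g_0$, which strictly speaking yields a strict local minimum but not $E''>0$. Your computation---differentiating the identity \eqref{eq:I} to get $(G_\nu^{\inv})'(g_0)-R_\mu'(-g_0)=(G_{\mu\boxplus\nu}^{\inv})'(g_0)$ and combining with the inverse function rule to obtain $\phi'(g_0)=(G_{\mu\boxplus\nu}^{\inv})'(g_0)/(G_\nu^{\inv})'(g_0)>0$---is correct and neatly closes that gap; note that positivity of the denominator uses $g_0<g^*_{\mu\boxplus\nu}\le g^*_\nu$ from \eqref{eq:add_conv_G_at_endpoint_ineq}.
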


\begin{proof}
    {\bf \ref{item:z>=supp-_no_crit_pt[lem:G+_is_min]}} Note that $(0,g^*_{\mu \boxplus \nu}) = (0,\infty)\cap -\hat{D}_{\mu \boxplus \nu}$ (recall \eqref{eq:D_hat_def}). Thus by
    \eqref{eq:D_sandwich} and \eqref{eq:Dhatmu_subset_D+mu} we have $(0,g^*_{\mu \boxplus \nu}) \subset -D^+_\mu$. Therefore by Lemma \ref{lem:fp_diff_sign} \ref{<lem:G_sign>_a)} \ref{item:D_+[lem:fp_diff_sign]} and \eqref{eq:Ginv_ext_restricted}
    \begin{equation}
        \sign\left(E_{\mu,\nu,z}'(g)\right)
        =
        \sign\left( G_{\muplusnu}^{\inv}(g) - z \right)
    \end{equation}
    for all $g\in\mathcal{E}_{\mu,\nu,z}\cap(0,g^*_{\mu \boxplus \nu})$. We have $G_{\muplusnu}^{\inv}(g) < z^*_{\mu \boxplus \nu}$ for all $g < g^*_{\mu \boxplus \nu}$, so if $z \ge z^*_{\mu \boxplus \nu}$, then clearly the r.h.s. is negative. This proves \ref{item:z>=supp-_no_crit_pt[lem:G+_is_min]}.\\

    {\bf \ref{item:z<supp-_one_crit_pt[lem:G+_is_min]}}
    If $ z < z^*_{\mu \boxplus \nu}$, then using Lemma \ref{lem:fp_diff_sign}  \ref{<lem:G_sign>_a)} \ref{item:D_+[lem:fp_diff_sign]} again yields 
    \begin{equation}
        \sign\left(E_{\mu,\nu,z}'(g)\right)=\sign(G_{\muplusnu}(z)-g).
    \end{equation}
    This and \eqref{eq:G_mu_strict_inc} imply that for all $g \in \mathcal{E}_{\mu,\nu,z}\cap(0,g^*_{\mu \boxplus \nu})$
    \begin{equation}
        E_{\mu,\nu,z}'(g)\begin{cases}
        <0 & \text{\,for }0<g<G_{\muplusnu}(z),\\
        =0 & \text{ for }g=G_{\muplusnu}(z),\\
        >0 & \text{\,for }G_{\muplusnu}(z)<g.
        \end{cases}
    \end{equation}
    Since $G_{\muplusnu}(z) \in \hat{\mathcal{E}}_{\mu,\nu,z} \cap (0,g^*_{\mu \boxplus \nu})$ by Lemma \ref{lem:fp_diff_sign} \ref{<lem:G_sign>_c)}, this concludes the proof of \ref{item:z<supp-_one_crit_pt[lem:G+_is_min]}.
\end{proof}

The remainder of the proof is concerned with proving that $U_{\mu \boxplus \nu}(z)=E_{\mu,\nu,z}(G_{\muplusnu}(z))$. For this we first note that $U_{\mu \boxplus \nu}$ satisfies
\begin{equation}\label{eq:U+_deriv_and_asymp}
    U_{\muplusnu}'(z)=-G_{\muplusnu}(z)\quad\text{for}\quad z<z^*_{\mu\boxplus\nu},\quad\text{and}\quad\lim_{z\to-\infty}\{U_{\muplusnu}(z)-\log|z|\}=0,
\end{equation}
as a consequence of Lemma \ref{lem:U_rho_deriv_and_asymp} with $\rho = \mu\boxplus\nu$. The next lemma shows that $z\to E_{\mu,\nu,z}(G_{\mu\boxplus\nu}(z))$ has the same behaviour, and the corollary thereafter deduces that $E_{\mu,\nu,z}(G_{\mu\boxplus\nu}(z))=U_{\mu \boxplus \nu}(z)$.

\begin{lemma}\label{lem:G+_z_G+_z_deriv_and_asymp}
    For all $z<z^*_{\mu\boxplus\nu}$
    \begin{equation}\label{eq:E_of_z_G+_z_deriv}
        a)\,\,\frac{d}{dz}\left\{E_{\mu,\nu,z}(G_{\muplusnu}(z))\right\}=-G_{\muplusnu}(z),
        \quad\quad
        b)\,\,\lim_{z\downarrow-\infty}\{E_{\mu,\nu,z}(G_{\muplusnu}(z))-\log|z|\}=0.
    \end{equation}
\end{lemma}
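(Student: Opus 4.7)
The strategy is to prove (a) via the envelope theorem, exploiting that $g=G_{\mu\boxplus\nu}(z)$ is a critical point of $E_{\mu,\nu,z}$, and then prove (b) by a direct asymptotic estimate as $z\downarrow-\infty$.

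\textbf{Part (a).} Write $E(z,g):=E_{\mu,\nu,z}(g)$ and set $g(z):=G_{\mu\boxplus\nu}(z)$. By Lemma \ref{lem:Gplus_is_min} \ref{item:z<supp-_one_crit_pt[lem:G+_is_min]} the map $g\mapsto E(z,g)$ has a critical point at $g=g(z)$ for every $z<z^*_{\mu\boxplus\nu}$. Since $G_{\mu\boxplus\nu}$ is real analytic on $(-\infty,z^*_{\mu\boxplus\nu})$ (by \eqref{eq:G_mu_strict_inc} and the inverse function rule applied to $G_{\mu\boxplus\nu}^{\inv}$, noting $g(z)\in\hat{\mathcal{E}}_{\mu,\nu,z}\cap(0,g^*_{\mu\boxplus\nu})$ by Lemma \ref{lem:fp_diff_sign} \ref{<lem:G_sign>_c)}) and $E(z,g)$ is jointly $C^1$ on its domain, the chain rule yields
\[
    \tfrac{d}{dz}E(z,g(z))
    =
    \partial_zE(z,g)\big|_{g=g(z)}
    +
    \underbrace{\partial_gE(z,g)\big|_{g=g(z)}}_{=0}\cdot g'(z).
\]
Differentiating \eqref{eq:E_def} in $z$ and using \eqref{eq:defSti} gives $\partial_zE(z,g)=-G_\nu(z-R_\mu(-g))$. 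At $g=g(z)$, the fixed-point identity \eqref{eq:G+(z)_solved_fixed_point_eq}/\eqref{eq:fixed_point_Gnu} from Lemma \ref{lem:fp_diff_sign} \ref{<lem:G_sign>_c)} gives $G_\nu(z-R_\mu(-g(z)))=g(z)=G_{\mu\boxplus\nu}(z)$, proving (a).

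\textbf{Part (b).} As $z\downarrow-\infty$, apply \eqref{eq:G_mu_limit_z_to_inf} to $\rho=\mu\boxplus\nu$ to obtain $g(z)\to 0$. Since $0\in D_\mu$ and $R_\mu$ is real analytic there, $R_\mu(-g(z))\to R_\mu(0)$, a finite constant. Split
\[
    E_{\mu,\nu,z}(g(z))-\log|z|
    =
    \int_0^{g(z)}sR'_\mu(-s)\,ds
    +
    \int\log\!\Bigl(\tfrac{\lambda-z+R_\mu(-g(z))}{|z|}\Bigr)\nu(d\lambda).
\]
The first integral tends to $0$ since $g(z)\to 0$ and $sR'_\mu(-s)$ is bounded near $s=0$. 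For the second, using $-z/|z|=1$ for $z<0$, the integrand equals $\log\!\bigl(1+(\lambda+R_\mu(-g(z)))/|z|\bigr)$, which converges to $0$ uniformly for $\lambda$ in the compact support of $\nu$; dominated convergence then delivers $(b)$.

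\textbf{Main obstacle.} No serious obstacle is expected: both parts use tools already in hand. The most delicate point is justifying the chain rule in (a), which requires the smoothness of $z\mapsto G_{\mu\boxplus\nu}(z)$ on $(-\infty,z^*_{\mu\boxplus\nu})$ (obtained via the inverse function rule for $G_{\mu\boxplus\nu}|_{(-\infty,\supp_-\mu\boxplus\nu)}$) and the joint $C^1$ regularity of $(z,g)\mapsto E(z,g)$ on the open set $\{(z,g):g\in\mathcal{E}_{\mu,\nu,z}\}$, which is immediate from \eqref{eq:E_def} since the logarithm is differentiable in its argument $\lambda-z+R_\mu(-g)$ on that set. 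The asymptotic (b) also implicitly relies on the compact support of $\nu$ (to bound the integrand uniformly), which is part of the standing hypothesis.
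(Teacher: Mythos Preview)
Your proof is correct and follows essentially the same route as the paper's own argument: part (a) is the envelope/chain-rule computation using that $g=G_{\mu\boxplus\nu}(z)$ is a critical point of $E_{\mu,\nu,z}$ together with the fixed-point identity from Lemma~\ref{lem:fp_diff_sign}~\ref{<lem:G_sign>_c)}, and part (b) uses $G_{\mu\boxplus\nu}(z)\to 0$, analyticity of $R_\mu$ at $0$, and the elementary asymptotic of the logarithmic integral. Your write-up is in fact slightly more detailed than the paper's (e.g.\ the explicit dominated-convergence argument for the second term in (b) and the regularity remarks for the chain rule), but the underlying ideas are identical.
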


\begin{proof}
    {\bf a)} By the chain rule
    \begin{equation}\label{eq:E_of_z_G+_z_deriv_chain_rule}
        \frac{d}{dz}\left\{E_{\mu,\nu,z}(G_{\muplusnu}(z))\right\}
        =
        \frac{d}{dz}E_{\mu,\nu,z}(g)|_{g=G_{\muplusnu}(z)}        
        +
        G_{\muplusnu}'(z)E_{\mu,\nu,z}'(g)|_{g=G_{\muplusnu}(z)}.
    \end{equation}
    We have already verified in Lemma \ref{lem:Gplus_is_min} that $g=G_{\mu\boxplus\nu}(z)$ is a local minimum of $E_{\mu,\nu,z}$, so
    \begin{equation}\label{eq:E_of_z_G+_z_deriv_in_g}
        E_{\mu,\nu,z}'(g)|_{ g=G_{\muplusnu}(z) } = 0.
    \end{equation}
    Recalling \eqref{eq:defSti} and \eqref{eq:E_def}, note that
    \begin{equation}
        \frac{d}{dz}E_{\mu,\nu,z}(g)
        =
        -G_{\nu}(z-R_\mu(-g)).
    \end{equation}
    Thus by Lemma \ref{lem:fp_diff_sign} \ref{<lem:G_sign>_c)}
    \begin{equation}\label{eq:E_of_z_G+_z_deriv_in_z}
        \frac{d}{dz}E_{\mu,\nu,z}(g)|_{g=G_{\muplusnu}(z)}
        =
        -G_{\muplusnu}(z).
    \end{equation}
    Combining (\ref{eq:E_of_z_G+_z_deriv_chain_rule})-(\ref{eq:E_of_z_G+_z_deriv_in_z})
    proves a).
    
    {\bf b)} Recall again the definition
    \eqref{eq:E_def} of $E_{\mu,\nu,z}$, and note that the parts involving $R_\mu$ satisfy
    \begin{equation}\label{eq:E_func_R_terms}
        \lim_{g \downarrow 0} \int_{0}^{g} s R_\mu'(-s)ds = 0
        \quad\text{and}\quad
        \lim_{g \downarrow 0}R_\mu(-g) = R_\mu(0),
    \end{equation}
    since $R_\mu$ is analytic in an open neighbourhood of $0$. Also $G_{\muplusnu}(z)\downarrow0$ as $z\downarrow-\infty$ (recall \eqref{eq:G_mu_limit_z_to_inf}), so using \eqref{eq:E_func_R_terms} with $g=G_{\muplusnu}(z)$ in \eqref{eq:E_def} proves b).
\end{proof}

\begin{corollary}\label{cor:U_+(z)_and_E_mu_nu(z,G_+(z))_ident}
    For all $z<z^*_{\mu\boxplus\nu}$
    \begin{equation}\label{<lem:U_+(z)_and_E_mu_nu(z,G_+(z))_ident>eq:ident}
        U_{\muplusnu}(z)=E_{\mu,\nu,z}(G_{\muplusnu}(z)).
    \end{equation}
\end{corollary}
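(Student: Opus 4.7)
The plan is to show that the two functions $z\mapsto U_{\mu\boxplus\nu}(z)$ and $z\mapsto E_{\mu,\nu,z}(G_{\mu\boxplus\nu}(z))$ coincide on the open ray $(-\infty,z^*_{\mu\boxplus\nu})$ by showing they satisfy the same first-order ODE with the same ``boundary condition'' at $-\infty$. Both ingredients are already in hand: Lemma \ref{lem:G+_z_G+_z_deriv_and_asymp}(a) gives the derivative of the right-hand side, \eqref{eq:U+_deriv_and_asymp} gives the derivative of the left-hand side, and both are equal to $-G_{\mu\boxplus\nu}(z)$. Thus the difference
\begin{equation*}
    \Delta(z) := U_{\mu\boxplus\nu}(z) - E_{\mu,\nu,z}(G_{\mu\boxplus\nu}(z))
\end{equation*}
is continuously differentiable with $\Delta'(z)=0$ on $(-\infty,z^*_{\mu\boxplus\nu})$, so $\Delta$ is constant on that interval (which is connected).

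To identify the constant, I would use the asymptotic behaviour as $z\downarrow-\infty$: by the second statement of \eqref{eq:U+_deriv_and_asymp} we have $U_{\mu\boxplus\nu}(z)=\log|z|+o(1)$, and by Lemma \ref{lem:G+_z_G+_z_deriv_and_asymp}(b) we also have $E_{\mu,\nu,z}(G_{\mu\boxplus\nu}(z))=\log|z|+o(1)$. Subtracting, $\Delta(z)\to 0$ as $z\downarrow-\infty$, so the constant value of $\Delta$ must be $0$. This immediately gives the claimed identity \eqref{<lem:U_+(z)_and_E_mu_nu(z,G_+(z))_ident>eq:ident}.

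There is essentially no obstacle, since the two nontrivial facts needed (the derivative formula and the asymptotic at $-\infty$ for $E_{\mu,\nu,z}(G_{\mu\boxplus\nu}(z))$) have already been established in Lemma \ref{lem:G+_z_G+_z_deriv_and_asymp}; both rely on the critical-point identity $E'_{\mu,\nu,z}(G_{\mu\boxplus\nu}(z))=0$ from Lemma \ref{lem:Gplus_is_min} (so that the chain rule collapses to a single term) and the analyticity of $R_\mu$ near $0$. The only minor point worth spelling out is that $\Delta$ is defined and $C^1$ on the whole interval $(-\infty,z^*_{\mu\boxplus\nu})$: this follows because $G_{\mu\boxplus\nu}$ is real analytic on that interval and $G_{\mu\boxplus\nu}(z)\in \hat{\mathcal{E}}_{\mu,\nu,z}\cap(0,g^*_{\mu\boxplus\nu})$ (by Lemma \ref{lem:fp_diff_sign}\ref{<lem:G_sign>_b)}), so $E_{\mu,\nu,z}(G_{\mu\boxplus\nu}(z))$ is a well-defined real-analytic function of $z$ there.
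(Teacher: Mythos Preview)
Your proposal is correct and follows exactly the paper's own proof: use \eqref{eq:U+_deriv_and_asymp} and Lemma \ref{lem:G+_z_G+_z_deriv_and_asymp} to see that both sides have the same derivative $-G_{\mu\boxplus\nu}(z)$ and the same $\log|z|$ asymptotics as $z\downarrow-\infty$, so their difference is constant and equal to zero.
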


\begin{proof}
    By (\ref{eq:U+_deriv_and_asymp}) and Lemma \ref{lem:G+_z_G+_z_deriv_and_asymp},
    the left and right-hand sides of (\ref{<lem:U_+(z)_and_E_mu_nu(z,G_+(z))_ident>eq:ident})
    have identical derivative, and their difference converges to zero
    as $z\downarrow-\infty$. This implies that the two sides are equal, proving
    (\ref{<lem:U_+(z)_and_E_mu_nu(z,G_+(z))_ident>eq:ident}).
\end{proof}

With the above, it is now easy to deduce our main theorem.

\begin{proof}[Proof of Theorem \ref{TH:main}]
    The identity \eqref{eq:main} is a direct consequence of 
    \eqref{[lem:G+_is_min]eq:inf} and \eqref{<lem:U_+(z)_and_E_mu_nu(z,G_+(z))_ident>eq:ident}. The claim about the minimizer follows from Lemma \ref{lem:Gplus_is_min} b).
\end{proof}

In the remainder of the section, we prove Proposition \ref{prop:Emunu_props} and Proposition \ref{prop:Emunu_inf_weaker_cond}, which encapsulate some additional properties of $E_{\mu,\nu,z}$ and the variational formula \eqref{eq:main}.

\begin{proof}[Proof of Proposition \ref{prop:Emunu_props}]
    \,\newline
    {\bf \ref{prop:Emunu_props_E_func_crit_point}} By Lemma \ref{lem:E_mu_nu_z_deriv}, $g \in \mathcal{E}_{\mu,\nu,z}$ is a critical point of $E_{\mu,\nu,z}$ iff $R_\mu'(-g)=0$ or the fixed point equation \eqref{eq:fixed_point_Gnu} is satisfied. It follows from the second and third lines of Lemma \ref{lem:fp_diff_sign} a) that \eqref{eq:fixed_point_Gnu} is satisfied iff $g\in(0,g^*_\nu)$ and $F_{\mu,\nu}(h) = z$ for $h = G_{\nu}^{\inv}(g)$. Since $h = G_{\nu}^{\inv}(g)$ is equivalent to $g = G_{\nu}(h)$, this completes the proof of \ref{prop:Emunu_props_E_func_crit_point}.

    \,\newline
    {\bf \ref{prop:Emunu_props_loc_min_of_E_func}} This is proved by Lemma \ref{lem:Gplus_is_min}.
\end{proof}

\begin{proof}[Proof of Proposition \ref{prop:Emunu_inf_weaker_cond}]
     We claim that for $z < z^*_{\mu \boxplus \nu}$
    \begin{equation}\label{eq:F_first_and_sec_sol}
        F_{\mu,\nu}(h)
        \begin{cases}
            <z & \text{\,if }h\in\left(-\infty,h_{\mu,\nu}^<(z)\right),\\
            >z & \text{\,if }h\in\left(h_{\mu,\nu}^<(z),h^>_{\mu,\nu}(z)\right).
        \end{cases}
    \end{equation}   
    The top line of \eqref{eq:F_first_and_sec_sol} follows since $F_{\mu,\nu}(h)$ is strictly increasing for $h < h^*_{\mu,\nu}$, and $h^<_{\mu,\nu}(z) < h^*_{\mu,\nu}$ is the leftmost solution of $F_{\mu,\nu}(h) = z$. The same facts imply that $F_{\mu,\nu}(h) > z$ for $ h \in (h^<_{\mu,\nu}(z),h^<_{\mu,\nu}(z)+\varepsilon)$ for small enough $\varepsilon>0$. Since $h^>_{\mu,\nu}(z)$ is the second solution of $F_{\mu,\nu}(z)=h$, if it exists, the bottom line of \eqref{eq:F_first_and_sec_sol} follows.

    Recall the third line of Lemma \ref{lem:fp_diff_sign} \ref{<lem:G_sign>_a)}. Recall also that $G_\nu(h^<_{\mu,\nu}(z)) = G_{\mu \boxplus \nu}(z)$, and let $g^>(z): = G_\nu(h^>_{\mu,\nu}(z))$. Using Lemma \ref{lem:fp_diff_sign} \ref{<lem:G_sign>_a)} \ref{item:D_+[lem:fp_diff_sign]} we obtain from \eqref{eq:F_first_and_sec_sol} that
    \begin{equation}\label{eq:E_deriv_sign}
        E_{\mu,\nu,z}'(g)
        \begin{cases}
            <0 & \text{\,for }g \in \left(-\infty,G_{\mu \boxplus \nu}(z)\right) \cap -D^+_\mu,\\
            >0 & \text{\,for }g\in\left(G_{\mu \boxplus \nu}(z),g^>(z)\right) \cap -D^+_\mu.
        \end{cases}
    \end{equation}\\

    {\bf \ref{item:F(h)=z_sec_sol[prop:Emunu_inf_weaker_cond]}} We assume that $z < z^*_{\mu \boxplus \nu}$, that $h^>_{\mu,\nu}(z)$ is a solution of $F_{\mu,\nu}(h) = z$, and that $(0,g^>(z)) \subset -D^+_\mu$. Then $g^>(z)$ is a critical point of $E_{\mu,\nu,z}$ by Proposition \ref{prop:Emunu_props} \ref{prop:Emunu_props_E_func_crit_point}. It follows from \eqref{eq:E_deriv_sign} that $E_{\mu,\nu,z}(g)$ is strictly increasing between $G_{\mu \boxplus \nu}(z)$ and $g^>(z)$. This proves that $g^>(z)$ is a local maximum or inflection point.
    \,\newline

    {\bf \ref{item:var_form_larger_range[prop:Emunu_inf_weaker_cond]}} Assume that $u\le0$ and $v\ge g^*_{\mu \boxplus \nu}$ are s.t. $(u,v) \subset - D_\mu^+$. It follows from \eqref{eq:E_deriv_sign} that $E_{\mu,\nu,z}$ is strictly
    decreasing in $(u,0]$ and strictly increasing in $[g_{\mu\boxplus\nu}^*,v)$. Thus even $(0,g^*_{\mu \boxplus \nu})$ in \eqref{eq:main} is replaced by $(u,v)$, the infima cannot be achieved outside $(0,g^*_{\mu \boxplus \nu})$. This proves the claim.
    \,\newline

    {\bf \ref{item:F_mon_at_most_loc_min_and_loc_max[prop:Emunu_inf_weaker_cond]}} Assume that $F_{\mu,\nu}$ is strictly concave, so that $ h^*_{\mu,\nu}$ is the unique global maximum of $F_{\mu,\nu}$.
 By Lemma \ref{lem:fp_diff_sign} \ref{item:D_+[lem:fp_diff_sign]} and the assumption $\mathcal{E}_{\mu,\nu,z} \subset -D^+_\mu$ we have
    \begin{equation}\label{eq:E_mu_nu_z_sign_F_mu_nu_eq}
        \sign( E_{\mu,\nu,z}'(g) )
        =
        \sign\left( F_{\mu,\nu}(G_\nu^{\inv}(g)) - z \right)
        \quad\text{ for all }\quad
        g \in \mathcal{E}_{\mu,\nu,z}.
    \end{equation}
    If $z > F(h^*_{\mu,\nu}) = z^*_{\mu \boxplus \nu}$, then $F_{\mu,\nu}(h) < z$ for all $h$. By \eqref{eq:E_mu_nu_z_sign_F_mu_nu_eq} this implies that $E_{\mu,\nu,z}'(g)>0$ for all $g \in \mathcal{E}_{\mu,\nu,z}$, which proves the top line of \eqref{eq:special_case_sc_MP_E_crit_points}.

    If $z = z^*_{\mu \boxplus \nu}$, then the equation $F_{\mu,\nu}(h) = z$ has the unique solution $h = h^*_{\mu,\nu} = G_\nu^{\inv}(g^*_{\mu \boxplus \nu})$, and $F_{\mu,\nu}(h) < z$ for all other $h$. By \eqref{eq:E_mu_nu_z_sign_F_mu_nu_eq} this implies that $g = g^*_{\mu \boxplus \nu}$ is an inflection point of $E_{\mu,\nu,z}$, and the unique critical point. Thus we have proved the middle line of \eqref{eq:special_case_sc_MP_E_crit_points}.

    If $z < z^*_{\mu \boxplus \nu}$, then the equation $F_{\mu,\nu}(h) = z$ has at most two solutions, and $F_{\mu,\nu}(h)<z$ for $h>h^>_{\mu,\nu}(z)$. Together with \eqref{eq:F_first_and_sec_sol} and \eqref{eq:E_mu_nu_z_sign_F_mu_nu_eq} this implies the bottom line of \eqref{eq:special_case_sc_MP_E_crit_points}.
\end{proof}

\section{Connecting real $R$-transform to standard theory}\label{sect:R_transform_theory}

In Section \ref{sect:R-trans} we introduced what we call the {\it real $R$-transform}. Lemma \ref{lem:real_R_trans_extend} essentially states that it is a well-defined object, and Lemma \ref{lem:real_R_trans_add_cond} connects it to the free additive convolution.
The standard definition of the $R$-transform in the literature is as a formal complex power series. In this section we will relate this definition to the real $R$-transform, and derive Lemmas \ref{lem:real_R_trans_extend}-\ref{lem:real_R_trans_add_cond} from the theory of $R$-transforms presented in \cite{speicher}. 

\subsection{The $R$-transform as a power series}
The $R$-transform $R_\mu$ of a probability measure $\mu$ on $\mathbb{R}$ is a formal complex power series, which is derived from the formal complex power series of $G_\mu$ via the relation \eqref{eq:R-transf} (see \cite{voiculescu} or  \cite[Chapter 2]{speicher}). We denote the power series by $\overline{R}_\mu$.

\subsection{Free additive convolution in terms of power series}
The {\it free additive convolution} of two probability measures $\mu$, $\nu$ is the unique probability measure whose $R$-transform is the formal power series $\overline{R}_\mu + \overline{R}_\nu$ obtained by adding the $R$-transforms of $\mu$ and $\nu$ (see \cite{voiculescu}, \cite[Theorem 18, Chapter 2]{speicher}). The free additive convolution $\mu\boxplus\nu$ has compact support if $\mu$ and $\nu$ do, and 
\begin{equation}\label{eq:add_conv_supp_ineq}
    \max\(|\supp\mu|,|\supp\nu|\)\leq|\supp(\mu\boxplus\nu)|\leq |\supp\mu|+|\supp\nu|\,
\end{equation}
(see \cite[page 326 and Lemma 3.1]{voiculescu}), where $|\supp\mu|:=\supp_+\mu-\supp_-\mu$.

\subsection{$R$-transforms as complex analytic functions}\label{subsec:R_trans_complex_analytic}
\cite[Chapter 3]{speicher} provides an alternative construction of the $R$-transform by proving that \eqref{eq:R-transf} holds as an identity of complex analytic functions, after restricting $G_\mu$ to an appropriate complex domain. This approach is more convenient for our purposes.

Let $\mu$ be a probability measure on $\mathbb{R}$. For any domain\footnote{Here and elsewhere a complex domain means an open and connected subset of $\mathbb{C}$.} $D\subset\mathbb{C}$, we call $\tilde{R}:D\to\mathbb{C}$
an \emph{$R$-transform function} of $\mu$ if $\tilde{R}$ is complex analytic, $-\tilde{R}(u)-u^{-1}\notin\supp\mu$
for all $u\in D\setminus\{0\}$, and 
\begin{equation}\label{eq:R_trans_func_eq}
    G_\mu( -\tilde{R}(u) - u^{-1} ) = u
    \quad\text{for all}\quad
    u\in D\setminus\{0\}.
\end{equation}
By \cite[Theorem 17 (iii), Chapter 3]{speicher}, there exists
an $R$-transform function whose domain $D$ includes $u=0$ for every $\mu$ with compact support. Furthermore,
by \cite[Theorem 17 (i), Chapter 3]{speicher} there is an $s>0$ such that $G_\mu|_{B(0,s)^c}$
is invertible, where $B(x,r)$ denotes the open ball of radius $r\ge0$ in $\mathbb{C}$, centered at $x\in\mathbb{C}$. 
Denote the inverse by $(G_\mu|_{B(0,s)^c})^{\left\langle -1\right\rangle }$.
For any $R$-transform function $\tilde{R}:D\to\mathbb{C}$ with $0\in D$,
it holds that $\tilde{R}(u)+u^{-1}\in B(0,s)^c$ for $u$ close enough to $0$. Thus there is a neighbourhood $U$ of $0$ on which
\begin{equation}\label{eq:R_trans_Ginv_complex_nbhood}
    \tilde{R}(u)=(G_\mu|_{B(0,s)^c})^{\left\langle -1\right\rangle }(-u)-u^{-1}\quad\text{for}\quad u\in U.
\end{equation}
If $\tilde{R}_i:D_i\to\mathbb{C},i=1,2$ are
two $R$-transform functions of the same $\mu$ and $0\in D_1\cap D_2$, this implies that they agree on a small enough neighbourhood of $0$. Thus they
have the same power series around $u=0$. This uniquely defined
power series is the aforementioned $\overline{R}_\mu$ (see \cite[Theorem 17 (iv), Chapter 3]{speicher} and above (3.1) in the same chapter).

In general there is no uniquely defined complex analytic extension of all the $R$-transform
functions of a measure $\mu$ to a maximal domain in $\mathbb{C}$, so strictly speaking we cannot speak of {\it{the}} $R$-transform function of $\mu$.

\subsection{$R$-transform function on real axis}

Recall from \eqref{eq:def_real_R_trans_hat} the real analytic function
\begin{equation}\label{eq:def_real_R_trans_hat_sec4}
    \hat{R}_\mu(u)=(G_\mu|_{\mathbb{R}\cap(\overline{\supp}\,\mu)^c})^{\left\langle -1\right\rangle }(-u)-u^{-1},
    \quad\quad\quad
    \hat{R}_\mu:\hat{D}_\mu \setminus\{0\} \to \mathbb{R},
\end{equation}
which is defined for any compactly supported $\mu$. Comparing \eqref{eq:R_trans_Ginv_complex_nbhood} and \eqref{eq:def_real_R_trans_hat_sec4}, it is clear that $\hat{R}_\mu$ is related to the $R$-transform functions $\tilde{R}$ of $\mu$.
We now precisely characterize the relationship.
\begin{equation}\label{eq:U_R_def}
    \begin{array}{c}
        \text{For any $R$-transform function $\tilde{R}:D\to\mathbb{C}$ of $\mu$, where $D \ni 0$ is a complex domain,}\\
        \text{define the real interval $U_{\tilde{R}}$ as the connected component of $0$ in $D \cap \mathbb{R}$},
    \end{array}
\end{equation}
(which is non-empty, see the discussion below \eqref{eq:R_trans_func_eq}).
On the real axis there {\it does} exist a unique maximal real analytic extension of $\hat{R}_\mu$ and of any $\tilde{R}|_{U_{\tilde{R}}}$, as we now prove in Lemma \ref{lem:real_R_trans_extend_sec_4} below. Note that Lemma \ref{lem:real_R_trans_extend_sec_4} implies Lemma \ref{lem:real_R_trans_extend}.

\begin{lemma}\label{lem:real_R_trans_extend_sec_4}
    For any compactly supported probability measure $\mu$ on $\mathbb{R}$,
    there exists a unique open interval $D_\mu\subset\mathbb{R}$,
    and a unique real analytic function $R_\mu:D_\mu\to\mathbb{R}$ such that the following holds. For any $R$-transform function $\tilde{R}:D\to\mathbb{C}$ of $\mu$ with $0 \in D$, it holds that $U_{\tilde{R}}\subset D_\mu$ and $R_\mu|_{U_{\tilde{R}}} = \tilde{R}|_{U_{\tilde{R}}}$ (recall \eqref{eq:U_R_def}). Furthermore, $\hat{D}_\mu \subset D_\mu$, and $R_\mu$ is a real analytic extension of $\hat{R}_\mu$.
\end{lemma}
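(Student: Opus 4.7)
The plan is to take $R_\mu:D_\mu\to\mathbb{R}$ as the maximal real analytic continuation along $\mathbb{R}$ of the germ defined at $0$ by the power series $\overline{R}_\mu$ (which has real coefficients, as the generating series of the free cumulants of the real measure $\mu$, and positive radius of convergence by the discussion around \eqref{eq:R_trans_Ginv_complex_nbhood}). The construction itself is routine: the union of all open intervals $D\ni 0$ admitting such a real analytic extension of the germ is an open interval $D_\mu$, and by the identity theorem for real analytic functions on connected sets the various extensions glue to a well-defined $R_\mu:D_\mu\to\mathbb{R}$, which is automatically the unique maximal one. The work lies in checking that both $\hat{R}_\mu$ and the restrictions $\tilde{R}|_{U_{\tilde{R}}}$ of all complex $R$-transform functions fall within this framework.

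First I would check that every $\tilde{R}|_{U_{\tilde{R}}}$ is a real analytic extension of the germ of $\overline{R}_\mu$ at $0$. Real-valuedness on $U_{\tilde{R}}$ follows from the fact, recalled in Subsection~\ref{subsec:R_trans_complex_analytic}, that the power series of $\tilde{R}$ at $0$ is $\overline{R}_\mu$: since the coefficients are real, $\tilde{R}$ is real on a small real subinterval of $U_{\tilde{R}}$, and then applying the identity theorem to the real analytic function $\operatorname{Im}\tilde{R}$ on the connected interval $U_{\tilde{R}}$ gives $\tilde{R}(U_{\tilde{R}})\subset\mathbb{R}$. That the germ at $0$ is exactly $\overline{R}_\mu$ is part of the same cited fact.

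The delicate step is showing that $\hat{R}_\mu$ extends to the same germ through $0$. By \cite[Theorem~17~(iii), Chapter~3]{speicher} there exists an $R$-transform function $\tilde{R}$ with $0\in D$, and near $0$ it satisfies \eqref{eq:R_trans_Ginv_complex_nbhood}. For real $u\neq 0$ close to $0$, the complex point $(G_\mu|_{B(0,s)^c})^{\langle-1\rangle}(-u)\in\mathbb{C}\setminus B(0,s)$ is a solution of $G_\mu(z)=-u$; the real point $G_\mu^{\inv}(-u)\in(\overline{\supp}\,\mu)^c$ is another solution, and since $G_\mu^{\inv}(-u)\to\pm\infty$ as $u\to 0$ it also lies outside $B(0,s)$ for $|u|$ small. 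Since $G_\mu|_{B(0,s)^c}$ is injective by \cite[Theorem~17~(i), Chapter~3]{speicher}, the two preimages coincide, and therefore $\tilde{R}(u)=\hat{R}_\mu(u)$ on a punctured real neighborhood of $0$. This forces $\hat{R}_\mu$ to have a removable singularity at $0$, with filled-in value agreeing with $\overline{R}_\mu$, and applying the identity theorem on each of the two connected components of $\hat{D}_\mu\setminus\{0\}$ shows that the analytic continuation matches $\hat{R}_\mu$ throughout $\hat{D}_\mu\setminus\{0\}$.

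With these two facts in hand, the required properties of $(D_\mu,R_\mu)$ are immediate: $\hat{D}_\mu\subset D_\mu$ with $R_\mu|_{\hat{D}_\mu\setminus\{0\}}=\hat{R}_\mu$, and $U_{\tilde{R}}\subset D_\mu$ with $R_\mu|_{U_{\tilde{R}}}=\tilde{R}|_{U_{\tilde{R}}}$, since in both cases one has a real analytic extension of the same germ. Uniqueness is then automatic under the (implicit) maximality built into the statement: any candidate $(D'_\mu,R'_\mu)$ must contain some $U_{\tilde{R}}$ and hence a real neighborhood of $0$ where $R'_\mu$ is pinned to the germ of $\overline{R}_\mu$; by maximal analytic continuation $D'_\mu\subset D_\mu$ and $R'_\mu=R_\mu|_{D'_\mu}$, so the maximal choice is unique. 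The main obstacle throughout is the geometric identification in the third paragraph of the specific complex branch $(G_\mu|_{B(0,s)^c})^{\langle-1\rangle}$ with the real inverse $G_\mu^{\inv}$ near $0$; once this is done, everything else is identity-theorem bookkeeping.
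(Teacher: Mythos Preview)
Your proof is correct and follows essentially the same approach as the paper: both hinge on using the injectivity of $G_\mu|_{B(0,s)^c}$ to identify the complex inverse with the real inverse $G_\mu^{\inv}$ for $u$ near $0$, thereby matching $\hat{R}_\mu$ with any $\tilde{R}$ on a punctured real neighborhood, and then invoking the identity theorem to glue into a maximal real analytic extension. The only cosmetic difference is that you organize the argument around the germ of the power series $\overline{R}_\mu$ at $0$ (using that its coefficients are real), whereas the paper argues real-valuedness of $\tilde{R}|_{U_{\tilde{R}}}$ by first establishing agreement with the manifestly real $\hat{R}_\mu$ near $0$.
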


\begin{proof}
Recall that for any compactly supported $\mu$, there is an $s>0$ large enough so that $G_\mu|_{B(0,s)^c}$
is invertible (see Section \ref{subsec:R_trans_complex_analytic}). If $u$ is sufficiently close to zero, then $\hat{R}_\mu(u)+u^{-1}\in B(0,s)^c\cap\mathbb{R}\cap(\overline{\supp}\,\mu)^c$.
For such $u$
\begin{equation}
    G_\mu|_{B(0,s)^c}(-\hat{R}_\mu(u)-u^{-1})
    =
    G_\mu|_{\mathbb{R}\cap(\overline{\supp}\,\mu)^c}(-\hat{R}_\mu(u)-u^{-1})
    \overset{\eqref{eq:def_real_R_trans_hat_sec4}}{=}
    u.
\end{equation}
The invertibility of $G_\mu|_{B(0,s)^c}$ then implies that $(G_\mu|_{B(0,s)^c})^{\left\langle -1\right\rangle }(-u)=(G_\mu|_{\mathbb{R}\cap(\overline{\supp}\,\mu)^c})^{\left\langle -1\right\rangle }(-u).$
Thus by \eqref{eq:R_trans_Ginv_complex_nbhood} and \eqref{eq:def_real_R_trans_hat_sec4}, it holds for any $R$-transform function $\tilde{R}$
that $U_{\tilde{R}}\cap\hat{D}_\mu$ is non-empty and 
\begin{equation}\label{eq:Rhat_and_R_tilde}
    \tilde{R}|_{U_{\tilde{R}}\cap\hat{D}_\mu}=\hat{R}_\mu|_{U_{\tilde{R}}\cap\hat{D}_\mu}.
\end{equation}

For any $R$-transform function $\tilde{R}$, consider the functions $\Re\,\tilde{R}|_{U_{\tilde{R}}}:U_{\tilde{R}}\to\mathbb{R}$
and $\Im\,\tilde{R}|_{U_{\tilde{R}}}:U_{\tilde{R}}\to\mathbb{R}$. Since
$\tilde{R}$ is complex analytic, these functions are real analytic.
It follows from \eqref{eq:Rhat_and_R_tilde} and \eqref{eq:def_real_R_trans_hat_sec4} that $\tilde{R}$ is real-valued in a interval
of $\mathbb{R}$ containing $0$, so real analyticity implies that
$\Im\,\tilde{R}|_{U_{\tilde{R}}}=0$.

Now, a collection of real analytic functions defined on open intervals
containing $0$, and agreeing on the intersection of their domains, have a unique maximal real analytic extension. Thus there
exists a unique $D_\mu\subset\mathbb{R}$ and a unique real analytic
function $R_\mu:D_\mu\to\mathbb{R}$, such that if $f:D\to\mathbb{R}$
is either a real analytic extension of $\tilde{R}|_{U_{\tilde{R}}}$
for an $R$-transform function $\tilde{R}$ of $\mu$, or a real analytic
extension of $\hat{R}_\mu$, then holds that $D\subset D_\mu$
and $f=R_\mu|_{D}$. This completes the proof.
\end{proof}

Since the above lemma implies Lemma \ref{lem:real_R_trans_extend}, it only remains to prove Lemma \ref{lem:real_R_trans_add_cond} about the real $R$-transform and the free additive convolution.

\subsection{Free additive convolution in terms of real $R$-transform}
Next we prove Lemma \ref{lem:add_conv_sec4} about the real $R$-transform and the free additive convolution. Note that Lemma \ref{lem:add_conv_sec4} implies Lemma \ref{lem:real_R_trans_add_cond}.

\begin{lemma}\label{lem:add_conv_sec4}
    If $\mu$ and $\nu$ are probability measures on $\mathbb{R}$ with compact support
    and respective $R$-transforms given by the power series $\overline{R}_\mu,\overline{R}_{\nu}$,
    then there is a unique probability measure $\rho$ with compact support
    whose $R$-transform is the power series $\overline{R}_\mu+\overline{R}_{\nu}$.
    Furthermore the real $R$-transforms of $\mu,\nu,\rho$ satisfy $D_\mu\cap D_{\nu}\subset D_\rho$
    and
    \begin{equation}\label{eq:add_conv_R_trans_real_sec4}
        R_\rho(u)=R_\mu(u)+R_\nu(u)\quad\text{for all}\quad u\in D_\mu\cap D_{\nu}.
    \end{equation}
\end{lemma}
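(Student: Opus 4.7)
The plan is to combine the standard free-probability construction of $\rho$ with a gluing argument on the real axis that leverages the maximality of the extension $R_\rho$ stated in Lemma \ref{lem:real_R_trans_extend_sec_4}. Existence and uniqueness of $\rho$ with $\overline{R}_\rho = \overline{R}_\mu + \overline{R}_\nu$ and compact support follow immediately from \cite[Theorem 18, Chapter 2]{speicher} and the inclusion \eqref{eq:add_conv_supp_ineq}, so what requires work is the identity \eqref{eq:add_conv_R_trans_real_sec4}.

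The first substantive step will be to identify the Taylor series at $0$ of each real $R$-transform with the corresponding formal power series. Fix any $R$-transform function $\tilde{R}_\mu : D \to \mathbb{C}$ of $\mu$ with $0 \in D$, which exists by \cite[Theorem 17 (iii), Chapter 3]{speicher}. Its restriction to the real interval $U_{\tilde{R}_\mu}$ from \eqref{eq:U_R_def} is real-valued, complex analytic in a neighbourhood of $0$, and by construction its Taylor series at $0$ is the power series $\overline{R}_\mu$. By Lemma \ref{lem:real_R_trans_extend_sec_4}, $R_\mu$ agrees with $\tilde{R}_\mu$ on $U_{\tilde{R}_\mu}$, so $R_\mu$'s Taylor series at $0$ is $\overline{R}_\mu$; the analogous statements hold for $\nu$ and $\rho$. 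Since $\overline{R}_\rho = \overline{R}_\mu + \overline{R}_\nu$ as formal series, the real analytic functions $R_\rho$ and $R_\mu + R_\nu$ therefore coincide on some real open interval around $0$.

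Next I will glue. Set $\Omega := D_\rho \cup (D_\mu \cap D_\nu)$; this is an open interval of $\mathbb{R}$, since both summands are open intervals containing $0$. Define $T : \Omega \to \mathbb{R}$ by $T = R_\rho$ on $D_\rho$ and $T = R_\mu + R_\nu$ on $D_\mu \cap D_\nu$. On the overlap $D_\rho \cap D_\mu \cap D_\nu$, a connected open set containing $0$, both candidate definitions are real analytic and coincide near $0$, so by the real analytic identity theorem they coincide throughout the overlap, making $T$ well-defined and real analytic on $\Omega$. Moreover $T$ extends $\hat{R}_\rho$ from $\hat{D}_\rho \setminus \{0\}$ (since $R_\rho$ does) and $\Omega \supset D_\rho \supset \hat{D}_\rho$. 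Applying the maximality clause of Lemma \ref{lem:real_R_trans_extend_sec_4} to $T$ forces $\Omega \subset D_\rho$, hence $D_\mu \cap D_\nu \subset D_\rho$, together with $T = R_\rho$ on $\Omega$, which yields \eqref{eq:add_conv_R_trans_real_sec4}. The main subtlety is the power-series identification of the germs of $R_\mu$, $R_\nu$, $R_\rho$ at $0$ carried out in the preceding paragraph: once that is extracted cleanly from Lemma \ref{lem:real_R_trans_extend_sec_4} combined with the complex analytic picture of Section \ref{subsec:R_trans_complex_analytic}, the remaining gluing argument is essentially formal.
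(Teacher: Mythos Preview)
Your proof is correct and follows essentially the same skeleton as the paper's: establish $R_\rho = R_\mu + R_\nu$ on a real neighbourhood of $0$, then extend by the identity theorem and invoke maximality of $D_\rho$ to obtain $D_\mu \cap D_\nu \subset D_\rho$ together with \eqref{eq:add_conv_R_trans_real_sec4}. Your gluing step is in fact spelled out more carefully than the paper's closing ``by real analyticity''.

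The one genuine difference is the entry point. You construct $\rho$ by citing \cite[Theorem 18, Chapter 2]{speicher} at the level of formal power series, and then identify the Taylor series at $0$ of each real $R$-transform with the corresponding $\overline{R}$ via Lemma \ref{lem:real_R_trans_extend_sec_4} and an $R$-transform function. The paper instead appeals to \cite[Theorems 26 and 28, Chapter 3]{speicher}, the finite-variance construction on disks tangent to $\mathbb{R}$ at $0$, to obtain $\check{R}_\rho = \check{R}_\mu + \check{R}_\nu$ as an identity of complex analytic functions, and only then passes to the real axis. Your route is a bit more economical (it avoids the tangent-disk machinery), while the paper's stays entirely within the analytic framework of Chapter 3; both arrive at the same local identity near $0$, after which the arguments coincide. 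One small remark: the ``maximality clause'' you invoke is stated most explicitly in Lemma \ref{lem:real_R_trans_extend} rather than in Lemma \ref{lem:real_R_trans_extend_sec_4} itself, so you may wish to reference the former (or the last sentence of the proof of the latter) at that point.
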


\begin{proof}
    We appeal to \cite[Theorem 28, Chapter 3]{speicher}, which states that the sum of two $R$-transform
    functions of probability measures $\mu,\nu$ with finite variance
    (and possibly non-compact support) is the $R$-transform function corresponding to a unique probability
    measure. The domains of the $R$-transform functions in the statement
    of \cite[Theorem 28]{speicher} are open disks in the half-plane with positive imaginary part which are tangent to $\mathbb{R}$ at $0$ (as can be surmised by inspecting the proof, see e.g. its last two sentences). This is in contrast with the $R$-transform functions that Lemma \ref{lem:real_R_trans_extend_sec_4} 
    relates to the real $R$-transform, whose complex domains include $0$.

    By \cite[Theorem 26, Chapter 3]{speicher}, any probability measure $\mu$ on $\mathbb{R}$ with finite variance $\sigma^2$ has a $R$-transform function with complex domain $B(-i(4\sigma)^{-1},(4\sigma)^{-1})$. Denote it and its domain by $\check{R}:\check{D}_\mu \to \mathbb{C}$.
    By \cite[Theorem 28, Chapter 3]{speicher}, for any $\mu,\nu$ with finite variance there exists a unique probability measure $\rho$ s.t. $\rho$ has an $R$-transform function $\tilde{R}_\rho:D\to\mathbb{C}$ on a domain  $D$ contained in $\check{D}_\mu \cap \check{D}_\nu$, and $\tilde{R}_\rho = \tilde{R}_\mu+\check{R}_{\nu}$ on $D$. Furthermore, $\rho$ has finite variance, $\check{D}_\rho \subset \check{D}_\mu \cap \check{D}_\nu$ (in fact the variance of $\rho$ is the sum of the variances of $\mu$ and $\nu$), and
    \begin{equation}\label{eq:free_add_conv_R_trans_func_fin_var_speicher}
        \check{R}_\rho
        =
        \tilde{R}_\mu+\check{R}_{\nu} \text{ on } \check{D}_\rho.
    \end{equation}

    Since our $\mu,\nu$ also have compact support, so does $\rho$ by \eqref{eq:add_conv_supp_ineq}. Thus all three probabilities also have a $R$-transform functions $\tilde{R}_\mu,\tilde{R}_\nu,\tilde{R}_\rho$ with complex domain including $0$ (see Section \ref{subsec:R_trans_complex_analytic}). Since $\check{D}_\mu,\check{D}_\nu,\check{D}_\rho$ are tangent to $\mathbb{R}$ at $0$, they have a non-empty open intersection with the domains of $\tilde{R}_\mu,\tilde{R}_\nu,\tilde{R}_\rho$, respectively.
    Therefore complex analyticity and \eqref{eq:free_add_conv_R_trans_func_fin_var_speicher} implies that    \begin{equation}\label{eq:free_add_conv_R_trans_func_fin_compact_supp}
        \tilde{R}_\rho=\tilde{R}_\mu+\tilde{R}_{\nu} \text{ in a neighbourhood of } 0.
    \end{equation}
    By the discussion in Section \ref{subsec:R_trans_complex_analytic}, this implies that the $R$-transforms as power series satisfy $\overline{R}_\rho = \overline{R}_\mu + \overline{R}_\nu$.
    Also, $\tilde{R}_\rho|_{U_{\tilde{R}_\rho}}=\tilde{R}_\mu|_{U_{\tilde{R}_\mu}}+\tilde{R}_{\nu}|_{U_{\tilde{R}_{\nu}}}$ in a real open interval containing $0$, which by real analyticity implies \eqref{eq:add_conv_R_trans_real_sec4}.
\end{proof}

This completes the proof of Lemma \ref{lem:real_R_trans_extend} and Lemma \ref{lem:real_R_trans_add_cond}, which are the only facts about the $R$-transform used in the proofs of Theorem \ref{TH:main} and Theorem \ref{thm:add_conv_supp_end_point}.

\section{Stronger invertibility result for Stieltjes transform}\label{sec:stronger_stieltjes_invert}

We have also obtained a result on the invertibility of $G_\mu$ as a holomorphic function with domain $\mathbb{C}\setminus \supp \mu$, which improves on \cite[Theorem 17 (i), (iii), Chapter 3]{speicher}. The latter were used in Section \ref{sect:R_transform_theory}. \cite[Theorem 17 (i)]{speicher} states that
\begin{equation}\label{eq:speicher_thm17(i)}
    \text{
    $G_\mu|_{B(0,4r)^c}$ is injective,
    $\quad$
    where
    $\quad$
    $r=\max(|\supp_-\mu|,|\supp_+\mu|)$,}
\end{equation}
and \cite[Theorem 17 (iii)]{speicher} states that
\begin{equation}
    R_\mu(t) := G_\mu^{\inv}(-t) -t^{-1} \text{ is holomorphic on } B(0,(6r)^{-1}),
\end{equation}
for the same $r$, where $R_\mu(0)$ is defined by continuity. Our stronger version is the following.





\begin{theorem}\label{thm:stieltjes_global_invert}
    Let $\mu$ be a compactly supported probability measure on $\mathbb{R}$.
    \begin{enumerate}[label=\alph*)]
        \item \label{item:G_injective[thm:stieltjes_global_invert]} $G_\mu$ restricted to the open set $B = \{ z \in \mathbb{C}: \Re z\not\in[\supp_-\mu,\supp_+\mu]\}$ is injective.
        \item \label{item:R_holo[thm:stieltjes_global_invert]} Let $D = G_\mu(B)$. The function $R_\mu$ defined by 
        \be\label{eq:defRII}
            R_\mu(t) := (G_\mu|_B)^{\inv}(-t) - t^{-1}
        \ee
        for $t\ne0$ and for $t=0$
        by continuity, is holomorphic in $D \cup \{0\}$, which is an open domain.
    \end{enumerate}
\end{theorem}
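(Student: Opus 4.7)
For (a), I would decompose $B = B_- \sqcup B_+$ with $B_- = \{z \in \C : \Re z < \supp_- \mu\}$ and $B_+ = \{z \in \C : \Re z > \supp_+ \mu\}$, and reduce injectivity on $B$ to injectivity on each $B_\pm$ separately. The reduction rests on a sign computation: for $z \in B_-$ and $\lambda \in \supp \mu$ one has $\Re(\lambda - z) > 0$, hence $\Re[1/(\lambda - z)] > 0$, and integration gives $\Re G_\mu(z) > 0$; symmetrically $\Re G_\mu(z) < 0$ on $B_+$. Consequently $G_\mu(B_-) \cap G_\mu(B_+) = \emptyset$.

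For injectivity on $B_-$ (the case $B_+$ is symmetric via $z\mapsto-z$ applied to the reflected measure), I would suppose $G_\mu(z_1) = G_\mu(z_2)$ with $z_1 \ne z_2$ in $B_-$. Subtracting the two Stieltjes integrals and dividing by $z_1 - z_2$ gives
\[
\int \frac{\mu(d\lambda)}{(\lambda - z_1)(\lambda - z_2)} = 0.
\]
Using
\[
\Re[(\lambda - z_1)(\lambda - z_2)] = (\lambda - \Re z_1)(\lambda - \Re z_2) - \Im z_1 \Im z_2,
\]
\[
\Im[(\lambda - z_1)(\lambda - z_2)] = -(\Im z_1)(\lambda - \Re z_2) - (\Im z_2)(\lambda - \Re z_1),
\]
I then split on the sign of $\Im z_1 \Im z_2$. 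If $\Im z_1 \Im z_2 \le 0$, the real part above is strictly positive throughout $\supp \mu$ (both $\lambda - \Re z_i > 0$), so $\Re\{1/[(\lambda - z_1)(\lambda - z_2)]\} > 0$ throughout, contradicting the integral vanishing. If $\Im z_1,\Im z_2$ share a strict sign, the imaginary part has constant strict sign on $\supp\mu$, giving the analogous contradiction via the imaginary part of the integral.

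For (b), $G_\mu|_B$ is holomorphic and injective by (a), so by the open mapping theorem $D = G_\mu(B)$ is open, and since a holomorphic injection on a domain has nowhere vanishing derivative, the inverse function theorem makes $(G_\mu|_B)^{\inv}$ holomorphic on $D$; hence $R_\mu$ from \eqref{eq:defRII} is holomorphic on its natural domain of definition away from $t=0$. To extend holomorphically across $t=0$, I would invoke \cite[Theorem 17, Chapter 3]{speicher}, which provides a holomorphic $R$-transform function $\tilde R_\mu$ on an open neighborhood $U\ni 0$ with $G_\mu(-\tilde R_\mu(t)-1/t)=t$ for $t\in U\setminus\{0\}$. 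For small real $t$, the point $-\tilde R_\mu(t)-1/t$ is real with large modulus, so it lies in $B$, and therefore $R_\mu(t) = \tilde R_\mu(t)$ on a real open interval through $0$; analyticity then glues the two pieces to a holomorphic extension on the open domain containing $D\cup\{0\}$.

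The main technical obstacle is the case split in (a): the real-part argument handles opposite-sign (or zero) imaginary parts, while the imaginary-part argument handles matching strict signs, and both crucially use that $\Re z_i \notin [\supp_-\mu,\supp_+\mu]$ so that $\lambda - \Re z_i$ has constant sign on $\supp\mu$. Part (b) is then essentially bookkeeping on top of (a), using the power-series $R$-transform from \cite{speicher} to glue across $t=0$.
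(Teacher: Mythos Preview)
Your argument for (a) is correct and takes a genuinely different, more elementary route than the paper. The paper proceeds indirectly: it starts from Speicher's injectivity of $G_\mu$ on $\{|z|>4r\}$, localizes all zeros of $G_\mu'$ inside a bounded rectangle (Lemma~\ref{lem:G_loc_inv}), and then applies a global inverse function theorem (Lemma~\ref{lemma:italia}) on carefully chosen regions $B_2\setminus B_1$, checking injectivity on their boundaries by hand (Lemma~\ref{lem:B1}) before letting $\varepsilon\to0$ and $R',R''\to\infty$. Your approach bypasses all of this machinery: the difference-quotient identity
\[
\int \frac{\mu(d\lambda)}{(\lambda-z_1)(\lambda-z_2)}=0
\]
together with the sign dichotomy on $\Im z_1\,\Im z_2$ gives the contradiction in a few lines, using only that $\lambda-\Re z_i$ has constant sign on $\supp\mu$. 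This is strictly simpler and more direct; in particular you never need to know where $G_\mu'$ vanishes, nor any topological inverse function theorem.

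For (b) both you and the paper argue in the same way---holomorphic inverse on $D$ by (a), then Speicher's Theorem~17 for the removable singularity at $t=0$. Your gluing step is slightly loose, however: the identification ``$R_\mu(t)=\tilde R_\mu(t)$ on a real interval'' should be justified by matching the two functional equations and invoking injectivity of $G_\mu|_B$ (there is a sign to track), and one should also observe that a full punctured neighborhood of $0$ lies in $D$ (e.g.\ via Speicher's Theorem~17(ii)) so that $D\cup\{0\}$ is genuinely open. These are routine details the paper also treats briefly.
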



The proof of Theorem \ref{thm:stieltjes_global_invert} \ref{item:G_injective[thm:stieltjes_global_invert]} starts by using \eqref{eq:speicher_thm17(i)}, and then employs a global inverse function theorem (Lemma \ref{lemma:italia}) to obtain the stronger version. Specifically, \eqref{eq:speicher_thm17(i)} implies that if a measure $\mu$ has compact support, then $G_\mu$ restricted to the set
\begin{equation}\label{eq:speicher}
        U = \mathbb{C} \setminus \{ z = u + iv \,:\, |u| \le R \,\,\text{ and } |v|\leq R \}
\end{equation}
is bijective, where
\begin{equation}\label{eq:speicher_ray}
    R:= 4\max(|\supp_+\mu|,|\supp_-\mu|,1).
\end{equation}
To extend the domain of injectivity up to the support of $\mu$, we use the following global inverse function theorem, whose proof can be found for instance in \cite[page 593]{FMS}. Note that it only requires a domain which is connected, rather than simply connected as in the Hadamard global inverse function theorem.

\begin{lemma}\label{lemma:italia}
    Let $A$ be an open subset of $\R^d$, and $f\,:A\,\mapsto\R^d$ be a $C^1$ function locally invertible on a connected bounded domain $D\subset A$. Then $f$ restricted to $D$ is (globally) invertible if it is injective on the boundary of $D$. 
\end{lemma}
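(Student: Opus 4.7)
The plan is to deploy Brouwer degree theory applied to $f:\overline{D}\to\mathbb{R}^d$, using the injectivity on $\partial D$ to control the degree on bounded components of $\mathbb{R}^d\setminus f(\partial D)$. The preliminary observations are: $\det Df$ is continuous and non-vanishing on the connected set $D$, hence has constant sign there; and for any $y\notin f(\partial D)$ the set $f^{-1}(y)\cap\overline{D}=f^{-1}(y)\cap D$ is finite (discrete by local invertibility, contained in the compact set $\overline{D}$). Denote its cardinality by $N(y)$.

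First I would show that $N$ is locally constant on the open set $W:=\mathbb{R}^d\setminus f(\partial D)$. Given $y_0\in W$ with preimages $x_1,\dots,x_k\in D$, pick pairwise disjoint open neighborhoods $U_i$ of $x_i$ in $D$ on which $f$ is a diffeomorphism onto a common small neighborhood of $y_0$. A standard compactness argument rules out extra preimages converging from outside $\bigcup_i U_i$: any such limit point lies in $\overline{D}\setminus\bigcup_i U_i$ and maps to $y_0$, so it cannot equal any $x_i$ and must lie in $\partial D$, contradicting $y_0\in W$. Consequently $N$ is constant on connected components of $W$, and the Brouwer degree satisfies $\deg(f,D,y)=\varepsilon N(y)$ where $\varepsilon\in\{\pm1\}$ is the common sign of $\det Df$ on $D$. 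On the unbounded component of $W$, $N\equiv 0$ since $\overline{D}$ is bounded.

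The main step, and the one I expect to be the principal obstacle, is to show $N\le 1$ on every bounded component $V$ of $W$. Here the injectivity of $f|_{\partial D}$ finally enters. I would use that $\deg(f,D,\cdot)$ depends only on $f|_{\partial D}$ (via the boundary formulation of the degree), so it coincides with the topological degree of the map $f|_{\partial D}:\partial D\to\mathbb{R}^d$ evaluated at points of $W$. Because $f|_{\partial D}$ is a continuous injection of the compact set $\partial D$, it is a homeomorphism onto $f(\partial D)$; the Jordan--Brouwer separation theorem then implies that on each bounded component of $\mathbb{R}^d\setminus f(\partial D)$ this degree equals $\pm1$ and on the unbounded component it equals $0$. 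Combined with $\deg(f,D,\cdot)=\varepsilon N$ this forces $N\in\{0,1\}$ throughout $W$. The technical difficulty is that a general connected bounded domain $D$ need not have a smooth boundary, so a direct application of the boundary-integral degree formula may require a smooth approximation of $\partial D$ or an appeal to the purely topological (simplicial or Čech) form of the degree.

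Finally, global injectivity of $f|_D$ is immediate: if $f(x_1)=f(x_2)=y_0$ with $x_1\ne x_2$ in $D$, then by the local diffeomorphism property any $y$ close enough to $y_0$ admits two distinct preimages, one near $x_1$ and one near $x_2$, both in $D$. Since $f(D)$ is open and $W$ is dense in $f(D)$ (as $f(\partial D)$ is a compact set of empty interior, by invariance of domain applied to the injective continuous map $f|_{\partial D}$ on a $(d-1)$-dimensional set), we can choose such a $y\in W$ near $y_0$. Then $N(y)\ge 2$, contradicting the bound just established, and the proof concludes.
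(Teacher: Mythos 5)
The paper does not actually prove this lemma itself (it cites \cite[p.~593]{FMS}), so I can only judge your argument on its own terms; it has a genuine gap at precisely the step you flag as the main one. Your reduction to showing $N\le 1$ on $W=\mathbb{R}^d\setminus f(\partial D)$ is sound: the local constancy of $N$, the identity $\deg(f,D,y)=\varepsilon N(y)$, the density of $W$ in $f(D)$ via invariance of domain, and the closing two-preimage contradiction are all correct. What fails is the claim that, because $f|_{\partial D}$ is injective, Jordan--Brouwer forces the degree to be $\pm1$ on bounded components of $W$ and $0$ on the unbounded one. Jordan--Brouwer concerns embedded $(d-1)$-spheres, whereas $\partial D$ for a general connected bounded domain need not be a sphere, nor even connected; note that in this paper the lemma is applied exactly to an annular region $B_2\setminus B_1$ with disconnected boundary. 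Already for $f=\mathrm{id}$ on a planar annulus the degree is $0$ (not $\pm1$) on the bounded component corresponding to the hole, so the statement is false as written.

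More seriously, the weaker bound $|\deg|\le 1$ that you actually need is \emph{not} a consequence of injectivity of $f|_{\partial D}$ alone, even though the degree is determined by the boundary values. Take $D=\{1<|z|<2\}\subset\mathbb{C}$ with boundary data $\phi(z)=z$ on $|z|=2$ and $\phi(z)=\bar z/2$ on $|z|=1$: this $\phi$ is continuous and injective on $\partial D$, yet for any continuous extension $f$ to $\overline{D}$ the outer and inner boundary image curves have winding numbers $+1$ and $-1$ about the origin, so $\deg(f,D,0)=1-(-1)=2$. Hence no argument using only the boundary injectivity can yield $|\deg|\le1$; one must use the local invertibility of $f$ \emph{in the interior} to exclude such boundary configurations (in the plane this is done, e.g., by noting that the two boundary image curves of an orientation-preserving immersion of an annulus are regularly homotopic, hence have equal turning numbers, which by the Umlaufsatz forces equal orientations of the two embedded image circles and kills the degree-$2$ case). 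Your proof never invokes the interior hypothesis for this purpose, so the central step is unproven; the surrounding skeleton could be salvaged once $N\le1$ is established by a correct argument.
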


The next lemma checks local invertibility by studying points at which the derivative vanishes. 

\begin{lemma}\label{lem:G_loc_inv}
    Let $\mu$ be a measure on $\mathbb{R}$ with compact support and $R>0$ as in \eqref{eq:speicher_ray}. All $z \in \mathbb{C}$ such that $G_\mu' (z) =0 $ are contained in     \begin{equation}\label{eq:zeroset}
        \{ z = u + iv \,:\, u \in (\supp_-\mu ,\supp_+\mu+)\,\,\text{and } |v|\leq |\supp\mu|\}.
    \end{equation}
\end{lemma}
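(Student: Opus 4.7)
The plan is to work directly from the formula
\begin{equation*}
    G_\mu'(z) = \int \frac{1}{(\lambda-z)^2}\mu(d\lambda),
\end{equation*}
obtained from Lemma \ref{lem:Gmu-kth-deriv} with $k=1$, and to extract information by separating real and imaginary parts. Writing $z = u + iv$ and $\lambda - z = (\lambda-u) - iv$, a direct calculation gives
\begin{equation*}
    \frac{1}{(\lambda-z)^2}
    =
    \frac{(\lambda-u)^2 - v^2 + 2iv(\lambda-u)}{\left((\lambda-u)^2 + v^2\right)^2},
\end{equation*}
so that $G_\mu'(z)=0$ is equivalent to the pair of integral equations
\begin{equation*}
    \int \frac{(\lambda-u)^2 - v^2}{\left((\lambda-u)^2 + v^2\right)^2}\mu(d\lambda) = 0
    \quad\text{and}\quad
    v\int \frac{\lambda-u}{\left((\lambda-u)^2 + v^2\right)^2}\mu(d\lambda) = 0.
\end{equation*}

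First I would dispose of $v=0$: in that case the real part equation reduces to $\int (\lambda-u)^{-2}\mu(d\lambda) = 0$, whose integrand is strictly positive on the support of $\mu$ (and $z=u$ lies outside $\supp\mu$ since otherwise $G_\mu'(z)$ is not defined); this is impossible. Hence any zero of $G_\mu'$ has $v \ne 0$, so the imaginary part equation reduces to $\int (\lambda-u)/|\lambda-z|^{4}\mu(d\lambda) = 0$. Next I would rule out $u \le \supp_-\mu$ or $u \ge \supp_+\mu$: in either of these ranges, $\lambda - u$ has constant sign on $\supp\mu$ and is nonzero on a set of positive $\mu$-measure, making the integrand of constant sign and nonzero on a set of positive measure, again contradicting the equation. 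This proves $u \in (\supp_-\mu, \supp_+\mu)$.

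Finally I would use the real part equation to bound $|v|$. Since $u \in (\supp_-\mu, \supp_+\mu)$, for every $\lambda \in \supp\mu = [\supp_-\mu,\supp_+\mu] \cap \supp \mu$ we have $|\lambda-u| \le |\supp\mu|$. If $|v| > |\supp\mu|$, then $(\lambda-u)^2 - v^2 < 0$ for all $\lambda \in \supp\mu$, so the integrand of the real part equation is strictly negative on a set of positive measure, contradicting the equation. Thus $|v| \le |\supp\mu|$, which together with the previous step places $z$ in the set \eqref{eq:zeroset}. The whole argument is elementary and I do not foresee a serious obstacle — the only mild subtlety is verifying that the boundary cases $u = \supp_\pm\mu$ of the second step really are excluded, which follows from the fact that the integrand in the imaginary-part equation does not vanish $\mu$-a.e. whenever $\mu$ has mass on both sides of $u$, and from the trivial observation that if all mass lies to one side of $u$ then the constant sign argument applies.
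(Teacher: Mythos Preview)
Your proposal is correct and follows essentially the same approach as the paper: separate $G_\mu'(z)$ into real and imaginary parts, use the sign of $\lambda-u$ in the imaginary-part integral to force $u\in(\supp_-\mu,\supp_+\mu)$, and use the bound $(\lambda-u)^2\le|\supp\mu|^2$ in the real-part integral to force $|v|\le|\supp\mu|$. Your treatment of the boundary cases $u=\supp_\pm\mu$ is in fact slightly more explicit than the paper's.
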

\begin{proof}
    Note that 
    \begin{equation}
        G_\mu'(z) = \int\frac{\mu(d\l)}{(\l - z)^2}.
    \end{equation}
    For real $z$ this is clearly positive provided it is well-defined, which is always the case for
    $z \in \R \setminus [\supp_{-} \mu,\supp_{+} \mu]$.
    Turning to complex $z=u+iv,v\ne0$, we compute
    \begin{equation}
        G_\mu'(z)
        =
        \int\frac{\mu(d\l)}{(\l-z)^2}
        =
        \int\frac{(\l-\bar z)^2}{|(\l-z)^2|^2}\mu(d\l)
        =
        \int\frac{(\l-u)^2-v^2}{|\l-z|^4}\mu(d\l)
        +
        2iv\int\frac{\l - u}{|\l-z|^4}\mu(d\l)\,.
    \end{equation}
    Note that if the imaginary part vanishes, i.e. $\int\frac{\l - u}{|\l-z|^4}\mu(d\l) = 0$,
    then it must hold that $u \in (\supp_{-} \mu, \supp_{+} \mu)$.
    Furthermore, for any $u \in [\supp_{-} \mu,\supp_{+} \mu]$ it holds that
    \be
        \int\frac{(\l-u)^2}{((\l-u)^2+v^2)^2}\mu(d\l)
        \le
        |\supp\mu|^2
        \int\frac{\mu(d\l)}{((\l-u)^2+v^2)^2}\,.
    \ee
    Therefore if $\Re(G_\mu'(z)) = 0$, then $|v| \le |\supp \mu|$. This proves the claim.
\end{proof}

We will apply Lemma \ref{lemma:italia} to a domain that excludes a set containing \eqref{eq:zeroset}, namely the set $B_1$ of the next lemma. The lemma shows that $G_\mu$ restricted to its boundary $\partial B_1$ is injective, as required to use Lemma \ref{lemma:italia}.

\begin{lemma}\label{lem:B1}
    Let $\mu$ be compactly supported.
    For all $\varepsilon>0$ small enough and $R'>R \e^{-1}$,  $G_\mu$ is injective when restricted to the set 
    \begin{equation}\label{eq:B1}
        B_1 =
        \{ z = u + iv \,:\, u \in [\supp_{-} \mu - \varepsilon ,\supp_+ \mu + \varepsilon]\,\,\text{ and } |v|\leq R' \}\,.  
%
    \end{equation}
\end{lemma}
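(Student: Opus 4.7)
The plan is to apply the global inverse function theorem Lemma~\ref{lemma:italia} to a suitable domain related to $B_1$, combining it with Speicher's injectivity on the set $U$ from~\eqref{eq:speicher} and with the Herglotz property of $G_\mu$. The key reduction is that $G_\mu$ preserves the sign of the imaginary part: for $z\notin\mathbb{R}$, $\sign(\Im G_\mu(z))=\sign(\Im z)$, because $\Im G_\mu(u+iv)=v\int|\lambda-z|^{-2}\mu(d\lambda)$. Hence if $G_\mu(z_1)=G_\mu(z_2)$ with both in $B_1\setminus\supp\mu$, then $z_1,z_2$ lie in the same open half-plane or both on $\mathbb{R}$. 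In the real-axis case, $B_1\cap(\mathbb{R}\setminus\supp\mu)$ is the disjoint union of $[\supp_-\mu-\varepsilon,\supp_-\mu)$ and $(\supp_+\mu,\supp_+\mu+\varepsilon]$; by~\eqref{eq:G_mu_strict_inc}, $G_\mu$ is strictly monotone on each, strictly positive on the first and strictly negative on the second, hence injective there.

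For the upper half $B_1^+:=B_1\cap\mathbb{C}^+$ I would apply Lemma~\ref{lemma:italia} (the lower half follows by Schwarz reflection $G_\mu(\bar z)=\overline{G_\mu(z)}$). The set $B_1^+$ is open, bounded, and connected, and $G_\mu$ is holomorphic on it. Local invertibility requires $G_\mu'\ne 0$, and Lemma~\ref{lem:G_loc_inv} confines the possible zeros of $G_\mu'$ to the strip $\{\Re z\in(\supp_-\mu,\supp_+\mu),\,|\Im z|\le|\supp\mu|\}$. For $R'>|\supp\mu|$ this strip can intersect $B_1^+$, so I would work with $B_1^+$ minus a tiny neighborhood of this (closed) strip, and remove the neighborhood at the end using that its $G_\mu$-image is negligible (or observe directly that on the upper half-plane $G_\mu'\ne 0$ whenever a Herglotz function is not constant restricted to horizontal or vertical sections - this can be carried out by repeating the sign computation in Lemma~\ref{lem:G_loc_inv}).

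It remains to verify injectivity of $G_\mu$ on $\partial B_1^+$, which decomposes into the top edge $\{\Im z=R'\}$, the two vertical side edges $\{\Re z=\supp_\pm\mu\pm\varepsilon\}$, and the bottom segment on the real axis. The top edge lies inside $U$ since $R'>R/\varepsilon>R$; thus Speicher's injectivity~\eqref{eq:speicher} applies there, and moreover $|G_\mu(z)|\le 1/|\Im z|=1/R'<\varepsilon/R$ throughout the top edge. On a vertical side edge, $\Re G_\mu(z)=\int(\lambda-\Re z)|\lambda-z|^{-2}\mu(d\lambda)$ has fixed sign (positive on the left edge, where $\lambda-\Re z\ge\varepsilon$; negative on the right, where $\lambda-\Re z\le-\varepsilon$), and $\Im G_\mu(z)$ is a continuous function of $\Im z$; within-edge injectivity then reduces to showing that $\Im G_\mu(\supp_\pm\mu\pm\varepsilon+iv)$ is strictly monotone in $v\in(0,R']$, which follows from an inequality of the form $\int |\lambda-z|^{-2}\mu>v^2 \int|\lambda-z|^{-4}\mu$, of the same flavour as the monotonicity inputs of Lemma~\ref{lemma:Rmu_increasing}. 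The bottom segment is already covered by the real-axis analysis.

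The main obstacle, and where the hypothesis $R'>R/\varepsilon$ does real work, is the \emph{disjointness of images} of the various boundary pieces. The bound $|G_\mu|<\varepsilon/R$ on the top edge must be matched with a corresponding lower bound on the vertical sides. On the side edge $\Re z=\supp_+\mu+\varepsilon$, for example, $|\Re G_\mu(z)|\ge\varepsilon\int|\lambda-z|^{-2}\mu(d\lambda)\ge\varepsilon/(\varepsilon^2+R'^2)$ (or a sharper bound restricting to a suitable subrange of $v$), and one picks $\varepsilon$ small enough that $\varepsilon/R$ falls strictly below this lower bound. This quantitative separation, together with the Herglotz reduction, the within-edge injectivity, and Lemma~\ref{lemma:italia} applied to the modified domain, closes the proof. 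The delicate book-keeping here—specifically, making all the bounds consistent while also handling the possible critical points of $G_\mu'$ inside the support strip—is the hardest part of the argument.
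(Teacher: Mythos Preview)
There is a genuine gap, and it stems from a typo in the lemma's statement: what the paper actually proves, and actually uses in the proof of Theorem~\ref{thm:stieltjes_global_invert}, is injectivity of $G_\mu$ on the \emph{boundary} $\partial B_1$, not on $B_1$. Injectivity on all of $B_1$ is in fact false in general: for $\mu=\tfrac12\delta_{-1}+\tfrac12\delta_1$ one has $G_\mu(z)=z/(1-z^2)$ and $G_\mu'(z)=(1+z^2)/(1-z^2)^2$, so $G_\mu'(\pm i)=0$ and $G_\mu$ fails to be locally injective at $\pm i\in B_1$. Your plan to apply Lemma~\ref{lemma:italia} on $B_1^+$ therefore cannot succeed; your proposed fix of excising a neighbourhood of the critical set does not help (the new boundary $\partial N$ would itself need to be handled, and the non-injectivity near the critical points is genuine), and the alternative claim that ``on the upper half-plane $G_\mu'\ne 0$'' is simply false, as the example shows.

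Once the target is corrected to $\partial B_1$, the paper's argument is direct and avoids Lemma~\ref{lemma:italia} entirely here: write $\partial B_1=S_1\cup S_2\cup S_3$, where $S_1$ is the top/bottom edges $|\Im z|=R'$, $S_2$ the far parts $R<|\Im z|\le R'$ of the vertical edges, and $S_3$ the near parts $|\Im z|\le R$; Speicher's result~\eqref{eq:speicher} gives injectivity on $S_1\cup S_2$; on $S_3$ one uses that $\Re G_\mu(u+iv)=\int(\lambda-u)|\lambda-z|^{-2}\mu(d\lambda)$ has fixed sign on each vertical edge and is strictly monotone in $|v|$ (this also separates $S_2$ from $S_3$); and the hypothesis $R'>R\varepsilon^{-1}$ is used only for the modulus separation $\sup_{S_1}|G_\mu|<1/R'<\varepsilon/R<\inf_{S_3}|G_\mu|$. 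Note in particular that the paper uses monotonicity of $\Re G_\mu$ in $|v|$, not of $\Im G_\mu$: your proposed inequality for $\Im G_\mu$ fails already for $\mu=\delta_0$, where $\Im G_\mu(-\varepsilon+iv)=v/(\varepsilon^2+v^2)$ is not monotone on $(0,R']$.
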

\begin{proof}

    Let us take $R'>R$. We decompose the boundary as
    $ \partial {B_1} = S_1 \cup  S_2 \cup S_3 $
    for    \begin{equation}
        \begin{array}{ccl}
        S_1 & = & \{z\in\partial B_1:\Re z\in(\supp_{-}\mu-\varepsilon,\supp_{+}\mu+\varepsilon),|\Im z|=R'\},\\
        S_2 & = & \{z\in\partial B_1:\Re z\in\{\supp_{-}\mu-\varepsilon,\supp_{+}\mu+\varepsilon\},R<|\Im z|\le R'\},\\
        S_3 & = & \{z\in\partial B_1:\Re z\in\{\supp_{-}\mu-\varepsilon,\supp_{+}\mu+\varepsilon\},|\Im z|\le R\}.
        \end{array}
    \end{equation}
   We now verify that the restrictions $G_\mu|_{\partial S_i}$ are injective for $i=1,2,3$, and that
    $G_\mu(S_i) \cap G_\mu(S_j) = \emptyset$ for all $i\ne j$. Together these claims imply that $G_\mu|_{\partial B_1}$ is injective.

    Since $R'>R$, the set $S_1\cup S_2$ is contained in $U$ from \eqref{eq:speicher}. Thus, by \cite[Theorem 17, (i)  Chapter 3]{speicher}, $G_\mu|_{S_1\cup S_2}$ is injective.
    Next, for all $z = u + iv$ with $u\not\in [\supp_{-} \mu, \supp_{+} \mu]$ we have that
    \be\label{eq:segments}
        \Re(G_\mu(u+iv))
        =
        \int\frac{\l - u}{(\l-u)^2+v^2}\mu(d\l)
    \ee
    is positive for $u > \supp_{+} \mu$ and negative for $u < \supp_{-} \mu$. Furthermore for fixed $u$ the real part \eqref{eq:segments} is decreasing for $v \ge 0$, where $\Im\, G_\mu\geq 0$ and it is increasing for $v \le 0$, where $\Im\, G_\mu\leq 0$. This proves that $G_\mu( S_2 ) \cap G_\mu( S_3 )=\emptyset$, and that $G_\mu|_{S_3}$ is injective.
    It only remains to ensure that $G_\mu( S_1 ) \cap G_\mu( S_3 )=\emptyset$. For this we use the bounds
    \begin{equation}\label{eq:G_bounds}
        \frac{
        {\displaystyle 
        \min\left( \inf_{x\in\supp\mu}\left|x-\Re z\right|,|\Im z|\right)}
        }{{\displaystyle \sup_{x\in\supp\mu}\left|x-z\right|^2}}
        \le
        |G_\mu(z)|
        \le
        \frac{1}{\displaystyle{\inf_{x\in\supp\mu}}\left|x-z\right|},
    \end{equation}
    which follow by the definition \eqref{eq:defSti} of $G_\mu$. The above bounds gives $\inf_{z \in S_3} |G_\mu(z)| > \e/R$ and $\sup_{z \in S_1} |G_\mu(z)| < 1/{R'}$. If $R'>R\e^{-1}$, then $G_\mu( \partial S_1) \cap G_\mu( \partial S_3) = \emptyset$, proving the injectivity of $G_\mu|_{\partial B}$. Finally, by the lower bound in \eqref{eq:G_bounds}, we have that $\inf_{z\in \partial B_1}|G_\mu(z)| \geq \e/R'^2\geq \e^3/R^2$.
\end{proof}

We are now ready to combine Lemmas \ref{lemma:italia}, \ref{lem:G_loc_inv} and \ref{lem:B1} to prove Theorem \ref{thm:stieltjes_global_invert}.

\begin{proof}[Proof of Theorem \ref{thm:stieltjes_global_invert}]

    {\bf \ref{item:G_injective[thm:stieltjes_global_invert]}} Let $\varepsilon>0$ be small enough and $R'>0$ large enough, so that Lemma \ref{lem:B1} implies that $G_\mu$ restricted to $\partial B_1$ is injective, while Lemma \ref{lem:G_loc_inv}
     implies that all critical points of $G_\mu$ lie in the interior of $B_1$.
    
     By \eqref{eq:G_bounds} the quantity $|G_\mu(z)|$ is bounded below on $\partial B_1$, and $|G_\mu(z)|\to 0 $ as $|z| \to \infty$. Thus for $R''$ large enough it holds for
     \begin{equation}
            B_2:=\{ z = u + iv \,:\, 
        |u| \le R'',\,\,|v|\leq R''\},
    \end{equation}
    that $\sup_{z \in \partial B_2} |G_\mu(z)| < \inf_{z \in \partial B_1} |G_\mu(z)|$. Thus in particular $G_\mu( \partial B_1) \cap G_\mu( \partial B_2) = \emptyset $. Furthermore if $R''$ is large enough, then $\partial B_2 \subset U$ for $U$ from \eqref{eq:speicher}, so $G_\mu$ restricted to $\partial B_2$ is injective.
    
    Thus $G_\mu(z)$ restricted to $\partial B_i$ is injective for $i=1,2$, and $G_\mu(\partial B_1)\cap G_\mu(\partial B_2) = \emptyset$, implying that $G_\mu$ restricted to $\partial (B_2 \setminus B_1)$ is injective. Furthermore, there are no critical points of $G_\mu$ in $B_2 \setminus B_1$ (since they are all in $B_1$), so $G_\mu$ is locally invertible on $B_2 \setminus B_1$. Therefore Lemma \ref{lemma:italia} implies that $G_\mu$ restricted to $B_2 \setminus B_1$ is injective.
    
    In the above argument we can take $R''$ arbitrarily large, which implies that in fact $G_\mu$
    restricted to $B_1^c$ is injective. Choosing $\varepsilon>0$
    arbitrarily small, and thus $R'$ arbitrarily large, implies that
    $G_\mu$ restricted to $\{ z \in \mathbb{C}: \Re z \notin [-\supp_{-} \mu, \supp_{+} \mu]\}$ is injective. 

    {\bf \ref{item:R_holo[thm:stieltjes_global_invert]}} Part \ref{item:G_injective[thm:stieltjes_global_invert]} implies that $(G_\mu|_B)^{\inv}$ is well-defined and holomorphic in the domain $D$.
    Moreover, we have
    \been{
        \lim_{|z| \to \infty} G_\mu(z)=0\,,
    }
    and clearly $G_\mu(z)\neq 0$ for any $z$ such that $|z|<\infty$, since $G_\mu$ is injective. This implies that $0 \notin D$. Thus the function $R_\mu$ is also holomorphic in $D$. Moreover, it has a removable singularity at the origin as shown in \cite[Theorem 17, Chapter 3]{speicher}. Therefore $R_\mu$ as defined is indeed holomorphic on $D \cup \{0\}$.
\end{proof}


\begin{thebibliography}{00}
%

\bibitem{AMS} K. Asgari, A. Montanari, B. Saeed {\em Local minima of the empirical risk in high dimension: General theorems and convex examples}. arXiv preprint arXiv:2502.01953 (2025).

\bibitem{BES} Z. Bao, L. Erdős, K. Schnelli {\em On the support of the free additive convolution}. Journal d'Analyse Mathématique, Volume 142 (2020), 323–348.

\bibitem{BKMN} N. P. Baskerville, J. P. Keating, F. Mezzadri, J. Najnudel {\em The loss surfaces of neural networks with general activation functions}. Journal of Statistical Mechanics: Theory and Experiment, 2021(6) 064001 (2021), \url{https://dx.doi.org/10.1088/1742-5468/abfa1e}.

\bibitem{noi} D. Belius, F. Concetti, G. Genovese {\em On the determinant in Bray-Moore's TAP complexity formula}, arXiv:2401.08529 (2024). 
%
\bibitem{ben} G. Ben Arous, P. Bourgade, B. McKenna. {\em Exponential growth of random determinants beyond invariance.} Probability and Mathematical Physics 3.4 (2023): 731-789.
%
%

\bibitem{mon} Z. Fan, S. Mei, A. Montanari. {\em TAP free energy, spin glasses and variational inference} The Annals of Probability 49, (2021): 1-45.
%
\bibitem{FMS} N. Fusco, P. Marcellini, C. Sbordone, S.G. Chiossi. {\em Mathematical Analysis: Functions of Several Real Variables and Applications}. Springer; 2022. 
%
\bibitem{GM} A. Guionnet, M. Maïda {\em Large deviations for the largest eigenvalue of the sum of two random matrices}. Electronic Journal of Probability, Volume 25 (2020), no. 14, \url{https://doi.org/10.1214/19-EJP405}.

\bibitem{MBAB} A. Maillard, G. Ben Arous,  G. Biroli {\em  Landscape complexity for the empirical risk of generalized linear models}. Proceedings of Machine Learning Research, vol 107 pp. 287–327 (2020).

\bibitem{speicher} J. A. Mingo, R. Speicher. {\em Free probability and random matrices}. Vol. 35. New York: Springer, 2017.
%
%

\bibitem{pastur} L. A. Pastur {\em On the spectrum of random matrices}. Theoretical and Mathematical Physics, 10(1) 67–74. (1972), \url{https://doi.org/10.1007/BF01035768}.

\bibitem{TM} T. G. Tsironis, A. L. Moustakas {\em Landscape Complexity for the Empirical Risk of Generalized Linear Models: Discrimination between Structured Data}. arXiv preprint arXiv:2503.14403 (2025).

\bibitem{voiculescu} D. Voiculescu, {\em Addition of certain non-commuting random variables.} Journal of functional analysis 66.3 (1986): 323-346.

\end{thebibliography}
\end{document}